\newtheorem{thm}{Theorem}
\newtheorem{remark}{Remark}
\newtheorem{lemma}{Lemma}
\theoremstyle{definition}
\theoremstyle{assumption}
\theoremstyle{proposition}
\begin{document}

\begin{frontmatter}

\title{The risk model with stochastic premiums, dependence and a
threshold dividend strategy}

\author{\inits{O.}\fnm{Olena}\snm{Ragulina}}\email{ragulina.olena@gmail.com}
\address{{Taras Shevchenko National University of Kyiv} \\
Department of Probability Theory, Statistics and Actuarial Mathematics\\
64 Volodymyrska Str., 01601 Kyiv, {Ukraine}}

\markboth{O. Ragulina}{The risk model with stochastic premiums,
dependence and a threshold dividend strategy}

\begin{abstract}
The paper deals with a generalization of the risk model with stochastic
premiums where dependence structures between claim sizes and inter-claim
times as well as premium sizes and inter-premium times are modeled by
Farlie--Gumbel--Morgenstern copulas. In addition, dividends are paid to its
shareholders according to a threshold dividend strategy. We derive integral
and integro-differential equations for the Gerber--Shiu function and the
expected discounted dividend payments until ruin. Next, we concentrate on
the detailed investigation of the model in the case of exponentially
distributed claim and premium sizes. In particular, we find explicit
formulas for the ruin probability in the model without either dividend
payments or dependence as well as for the expected discounted dividend
payments in the model without dependence. Finally, numerical illustrations
are presented.
\end{abstract}

\begin{keywords}
\kwd{Risk model with stochastic premiums}
\kwd{Farlie--Gumbel--Morgenstern copula}
\kwd{threshold strategy}
\kwd{Gerber--Shiu function}
\kwd{expected discounted dividend payments}
\kwd{ruin probability}
\kwd{integro-differential equation}
\end{keywords}
\begin{keywords}[2010]
\kwd{91B30}
\kwd{60G55}
\kwd{62P05}
\end{keywords}

\received{4 September 2017}
\revised{21 October 2017}
\accepted{8 November 2017}
\publishedonline{8 December 2017}
\end{frontmatter}

\section{Introduction}
\label{sec:1}

In the actuarial literature, a lot of attention is paid to the
investigation of the ruin measures such as the ruin probability,
the surplus prior to ruin and the deficit at ruin (see, e.g., \cite
{AsAl2010,MiRa2016,RoScScTe1999} and references therein).
A unified approach to the study of these risk measures together by
combining them into one function was proposed by
Gerber and Shiu \cite{GeSh1998}, who introduced the expected discounted
penalty function for the classical risk model.
The so-called Gerber--Shiu function has been investigated further by
many authors (see, e.g.,
\cite{ChTa2003,GeSh2005,LiGa2004_2,Su2005,XiWu2006}) in more
general risk models.
In those risk models, claim sizes and inter-claim times are assumed to
be mutually independent, which simplifies
the investigation of the ruin measures.
Nevertheless, this assumption has been proved to be very restrictive in
some real applications. For instance, in modelling
damages caused by natural catastrophic events, the intensity of the
catastrophe and the time elapsed since the last catastrophe
are expected to be dependent \cite{Bou2003,NiKa2008}.
That is why more and more authors have concentrated on the
investigation of risk models with dependence between claim sizes
and inter-claim times recently.

Albrecher and Boxma \cite{AlBo2004} consider a generalization of the
classical risk model, where the distribution of the time
between two claims depends on the previous claim size (see also \cite
{AlBo2005} for an extension).
Albrecher and Teugels \cite{AlTe2006} apply the random walk approach
and allow the inter-claim time and its subsequent
claim size to be dependent according to an arbitrary structure.
Boudreault, Cossette, Landriault and Marceau \cite{BoCoLaMa2006}
consider a particular dependence structure between
the inter-claim time and the subsequent claim size and derive the
defective renewal equation satisfied by the expected
discounted penalty function.
In \cite{MeZhGu2008}, the authors study the ruin probability in a model
where the time between two claim occurrences
determines the distribution of the next claim size.
The ruin probability in a model with independent but not necessarily
identically distributed claim sizes and inter-claim times
is investigated in \cite{AnBeKiSi2015} (see also references therein).

Cossette, Marceau and Marri \cite{CoMaMa2008,CoMaMa2010} deal with an
extension of the classical compound Poisson risk model
where a dependence structure between the claim size and the inter-claim
time is introduced through a Farlie--Gumbel--Morgenstern
copula and its generalization.
They derive the integro-differential equation and the Laplace transform
of the Gerber--Shiu discounted penalty function and
concentrate on exponentially distributed claim sizes.
Zhang and Yang \cite{ZhYa2011} extend these results to the compound
Poisson risk model perturbed by a Brownian motion.
In \cite{ChVr2014,Gu2013,YoXi2012}, the authors deal with the Sparre
Andersen risk model where the inter-claim times follow
the Erlang distribution and extend results obtained in \cite
{CoMaMa2008,CoMaMa2010}.

In all these papers, the dependence structure between the claim sizes
and the inter-claim times is described by
the Farlie--Gumbel--Morgenstern copula.
This copula is often used in applications to introduce dependence
structures due to its tractability and simplicity.
It allows positive and negative dependence as well as independence.
Nevertheless, the Farlie--Gumbel--Morgenstern copula has been shown to
be somewhat limited since it does not allow the modeling of
high dependencies. Indeed, its dependence parameter is $\theta\in
[-1,1]$, so its Spearman's rho and Kendall's tau are
$\rho_{\theta}= \theta/3\in[-1/3,1/3]$ and $\tau_{\theta}= 2\theta
/9\in[-2/9,2/9]$, respectively (see, e.g.,
\cite{BeJa2017,BePaZa2012,Ne2006} and references therein). This
limited range of dependence restricts the usefulness of
this copula for modeling. Note that the dependence parameter $\theta$
can be easily estimated from real data due to
the simple relations between it and the measures of association $\rho
_{\theta}$ and $\tau_{\theta}$.
For more information on the Farlie--Gumbel--Morgenstern copula, we
refer to, e.g., \cite{DeDhGoKa2005,Ne2006}
(see also \cite{CoMaMa2010} and references therein for applications of
this copula).
Despite the popularity of the Farlie--Gumbel--Morgenstern copula, other
copulas have been used in risk theory, for instance,
an Archimedean copula \cite{AlCoLo2011}, a Gaussian copula \cite
{CzKaBrMi2012} and a Spearman copula \cite{He2014}.

Risk models where an insurance company pays dividends to its
shareholders are of great interest in risk theory.
Dividend strategies for insurance risk models were first proposed by De
Finetti \cite{De1957}, who dealt with a binomial model.
Barrier strategies for the classical risk model and its different
generalizations have been studied in a number of papers
(see, e.g., \cite{ChLi2011,LiWuSo2009,LiGa2004_1,LiPa2006,LiWiDr2003,Wa2015,YaZh2008}).
For optimal dividend problems in insurance risk models, see the
monograph by Schmidli \cite{Sc2008} and references therein.

Cossette, Marceau and Marri \cite{CoMaMa2011,CoMaMa2014} consider the
classical risk process with a constant dividend barrier
and a dependence structure between claim sizes and inter-claim times
introduced through the Farlie--Gumbel--Morgenstern copula.
They analyze the Gerber--Shiu function and the expected discounted
dividend payments and then concentrate on exponentially
distributed claim sizes investigating the impact of the dependence on
ruin quantities.
The same model is studied in \cite{LiLiPe2014}, where, in particular,
the authors show that the solution to
the integro-differential equation for the Gerber--Shiu function is a
linear combination of the Gerber--Shiu function
with no barrier and the solution to the associated homogeneous
integro-differential equation.
For some earlier results in this direction, see also \cite{La2008}.

Shi, Liu and Zhang \cite{ShLiZh2013} consider the compound Poisson risk
model with a threshold dividend strategy and
a dependence structure modeled by the Farlie--Gumbel--Morgenstern copula.
They derive integro-differential equations for the Gerber--Shiu
function and the expected discounted dividend payments
paid until ruin as well as renewal equations for these functions, which
are used to obtain explicit formulas for them.

The present paper deals with a generalization of the risk model with
stochastic premiums introduced and studied in \cite{Boi2003}
(see also \cite{MiRa2016}). In contrast to the classical compound
Poisson risk model, where premiums arrive with constant
intensity and are not random, in this risk model premiums also form a
compound Poisson process, i.e. they arrive at random
times and their sizes are also random (see also \cite{MiRaSt2014,MiRaSt2015}
for a generalization of the classical risk model
where an insurance company gets additional funds whenever a claim arrives).
In \cite{Boi2003}, claim sizes and inter-claim times are assumed to be
mutually independent, and the same assumption
is made concerning premium arrivals. In this paper, we suppose that the
dependence structures between claim sizes and
inter-claim times as well as premium sizes and inter-premium times are
modeled by the
Farlie--Gumbel--Morgenstern copulas, which allows positive and negative
dependence as well as independence.
In addition, we suppose that the insurance company pays dividends to
its shareholders according to a threshold dividend strategy.
To be more precise, this implies that when the surplus is below some
fixed threshold, no dividends are paid,
and when the surplus exceeds or equals the threshold, dividends are
paid continuously at some constant rate.
Our subjects of investigation are the Gerber--Shiu function, a special
case of which is the ruin probability,
and the expected discounted dividend payments until ruin.

The rest of the paper is organized as follows. In Section~\ref{sec:2},
we describe the risk model we deal with.
In Section~\ref{sec:3}, we derive integral and integro-differential
equations for the Gerber--Shiu function.
In Section~\ref{sec:4}, we obtain corresponding equations for the
expected discounted dividend payments until ruin.
Section~\ref{sec:5} deals with exponentially distributed claim and
premium sizes in some special cases of the model.
Namely, we consider the ruin probability in the model without either
dividend payments or dependence,
and the expected discounted dividend payments in the model without dependence.
In these simpler models, we reduce the integral and
integro-differential equations derived in Sections~\ref{sec:2}
and~\ref{sec:3} to linear differential equations and find explicit
solutions to these equations.
Section~\ref{sec:6} provides some numerical illustrations.

\section{Description of the model}
\label{sec:2}

Let $(\varOmega, \mathfrak{F}, \mathbb{P})$ be a probability space
satisfying the usual conditions, and let all the stochastic objects
we use below be defined on it.

In the risk model with stochastic premiums introduced in \cite{Boi2003}
(see also \cite{MiRa2016}), claim sizes form a sequence
$(Y_i)_{i\ge1}$ of non-negative independent and identically
distributed (i.i.d.) random variables (r.v.'s) with cumulative
distribution function (c.d.f.) $F_{Y}(y)=\mathbb P[Y_i\le y]$.
The number of claims on the time interval $[0,t]$ is a Poisson process
$(N_t)_{t\ge0}$ with constant intensity $\lambda>0$.
Next, premium sizes form a sequence $(\bar Y_i)_{i\ge1}$ of
non-negative i.i.d. r.v.'s with c.d.f.
$\bar F_{\bar Y}(y)=\mathbb P[\bar Y_i\le y]$.
The number of premiums on the time interval $[0,t]$ is a Poisson
process $(\bar N_t)_{t\ge0}$ with constant intensity $\bar\lambda>0$.
Thus, the total claims and premiums on $[0,t]$ equal $\sum_{i=1}^{N_t}
Y_i$ and $\sum_{i=1}^{\bar N_t} \bar Y_i$, respectively.
We set $\sum_{i=1}^{0} Y_i =0$ if $N_t =0$, and $\sum_{i=1}^{0} \bar
Y_i =0$ if $\bar N_t =0$.
In what follows, we also assume that the r.v.'s $(Y_i)_{i\ge1}$ have a
probability density function (p.d.f.) $f_{Y}(y)$
and a finite expectation $\mu>0$, and the r.v.'s $(\bar Y_i)_{i\ge1}$
have a probability density function (p.d.f.) $f_{\bar Y}(y)$
and a finite expectation $\bar\mu>0$.

We denote a non-negative initial surplus of the insurance company by $x$.
Let $X_t(x)$ be its surplus at time $t$ provided that the initial
surplus is $x$.
Then the surplus process $ (X_t(x) )_{t\ge0}$ follows the equation
\begin{equation}
\label{eq:1} X_t(x) =x+\sum_{i=1}^{\bar N_t}
\bar Y_i -\sum_{i=1}^{N_t}
Y_i, \quad t\ge0.
\end{equation}

In \cite{Boi2003}, the r.v.'s $(Y_i)_{i\ge1}$ and $(\bar Y_i)_{i\ge
1}$, and the processes $(N_t)_{t\ge0}$ and $(\bar N_t)_{t\ge0}$
are assumed to be mutually independent. In this paper, we suppose that
the claim sizes $(Y_i)_{i\ge1}$ and the inter-claim times
are not independent but with a dependence structure modeled by a
Farlie--Gumbel--Morgenstern copula, and we make the same assumption
concerning premium arrivals.

To be more precise, let $(T_i)_{i\ge1}$ be a sequence of inter-arrival
times of $(N_t)_{t\ge0}$.
In particular, $T_1$ is the time of the first claim.
Thus, $(T_i)_{i\ge1}$ are i.i.d. r.v.'s with p.d.f. $f_{T}(t)=\lambda
e^{-\lambda t}$.
We assume that $(Y_i,T_i)_{i\ge1}$ are i.i.d. random vectors and for
every fixed $i\ge1$, the dependence structure between $Y_i$ and $T_i$
is modeled by a Farlie--Gumbel--Morgenstern copula with parameter
$\theta\in[-1,1]$, i.e.
\[
C_{\theta}^{\mathrm{FGM}}(u_1,u_2)
=u_1 u_2 +\theta u_1 u_2
(1-u_1) (1-u_2), \quad u_1,u_2
\in[0,1]
\]
(see, e.g., \cite{DeDhGoKa2005,Ne2006} for more information on copulas).
In other words, a claim size depends on the time elapsed from the
previous claim.
Therefore, the bivariate c.d.f. of $(Y_i,T_i)_{i\ge1}$ is defined by\vadjust{\goodbreak}
\begin{equation*}
\begin{split} F_{Y,T}(y,t) &=C_{\theta}^{\mathrm{FGM}}
\bigl(F_{Y}(y),F_{T}(t)\bigr)
\\
&=F_{Y}(y)F_{T}(t) +\theta F_{Y}(y)F_{T}(t)
\bigl(1\,{-}\,F_{Y}(y)\bigr) \bigl(1\,{-}\,F_{T}(t)\bigr), \quad y
\ge0, \  t\ge0. \end{split} %
\end{equation*}
The corresponding bivariate p.d.f. of $(Y_i,T_i)_{i\ge1}$ is given by
\begin{equation}
\label{eq:2} %
\begin{split} f_{Y,T}(y,t)
&=f_{Y}(y)f_{T}(t) +\theta f_{Y}(y)f_{T}(t)
\bigl(1-2F_{Y}(y)\bigr) \bigl(1-2F_{T}(t)\bigr)
\\
&=\lambda e^{-\lambda t} f_{Y}(y) +\theta h_{Y}(y)
\bigl(2\lambda e^{-2\lambda t} -\lambda e^{-\lambda t}\bigr), \quad y\ge0, \  t
\ge0, \end{split} %
\end{equation}
where $h_{Y}(y) =f_{Y}(y) (1-2F_{Y}(y))$, $y\ge0$.
Note that the case $\theta=0$ corresponds to the situation where the
claim sizes and the inter-claim times are independent.

Next, let $(\bar T_i)_{i\ge1}$ be a sequence of inter-arrival times of
$(\bar N_t)_{t\ge0}$.
In particular, $\bar T_1$ is the time of the first premium.
Therefore, $(\bar T_i)_{i\ge1}$ are i.i.d. r.v.'s with p.d.f. $f_{\bar
T}(t)=\bar\lambda e^{-\bar\lambda t}$.
We also suppose that $(\bar Y_i, \bar T_i)_{i\ge1}$ are i.i.d. random
vectors and for every fixed $i\ge1$, the dependence structure
between $\bar Y_i$ and $\bar T_i$ is modeled by a
Farlie--Gumbel--Morgenstern copula with parameter $\bar\theta\in[-1,1]$.
So the bivariate p.d.f. of $(\bar Y_i, \bar T_i)_{i\ge1}$ is given by
\begin{equation}
\label{eq:3} f_{\bar Y, \bar T}(y,t) =\bar\lambda e^{-\bar\lambda t}
f_{\bar Y}(y) +\bar\theta h_{\bar Y}(y) \bigl(2\bar\lambda
e^{-2\bar\lambda t} -\bar \lambda e^{-\bar\lambda t}\bigr), \quad y\ge0, \ t\ge0,
\end{equation}
where $h_{\bar Y}(y) =f_{\bar Y}(y) (1-2F_{\bar Y}(y))$, $y\ge0$.
The case $\bar\theta=0$ corresponds to the situation where the premium
sizes and the inter-premium times are independent.
The random vectors $(Y_i,T_i)_{i\ge1}$ and $(\bar Y_i, \bar T_i)_{i\ge
1}$ are assumed to be mutually independent.

From~\eqref{eq:2} and~\eqref{eq:3} we obtain the conditional p.d.f.'s
of the claim and premium sizes:
\begin{equation}
\label{eq:4} f_{Y|T}(y \,|\, t) =\frac{f_{Y,T}(y,t)}{f_{T}(t)}
=f_{Y}(y) +\theta h_{Y}(y) \bigl(2e^{-\lambda t}-1
\bigr), \quad y\ge0, \ t\ge0,
\end{equation}
and
\begin{equation}
\label{eq:5} f_{\bar Y | \bar T}(y \,|\, t) =\frac{f_{\bar Y, \bar T}(y,t)}{f_{\bar T}(t)}
=f_{\bar Y}(y) +\bar\theta h_{\bar Y}(y) \bigl(2e^{-\bar\lambda t}-1
\bigr), \quad y\ge0, \ t\ge0.
\end{equation}

Moreover, we suppose that the insurance company pays dividends to its
shareholders according to the following threshold
dividend strategy. Let $b>0$ be a threshold. When the surplus is below
$b$, no dividends are paid.
When the surplus exceeds or equals $b$, dividends are paid continuously
at a rate $d>0$.
Let $ (X_t^b(x) )_{t\ge0}$ denote the modified surplus process
under this threshold dividend strategy.
Then
\begin{equation}
\label{eq:6} X_t^b(x) =x+\sum
_{i=1}^{\bar N_t} \bar Y_i -\sum
_{i=1}^{N_t} Y_i -d \int
_0^t \mathbh{1}\bigl(X_s^b(x)
\ge b\bigr)\, \mathrm{d}s , \quad t\ge0,
\end{equation}
where $\mathbh{1}(\cdot)$ is the indicator function.

Let $(D_t)_{t\ge0}$ denote the dividend distributing process. For the
threshold dividend strategy described above, we have
\begin{equation*}
\mathrm{d}D_t= %
\begin{cases}
d\,\mathrm{d}t &\text{if} \ X_t^b(x)\ge b, \\
0 &\text{if} \ X_t^b(x)<b.
\end{cases} %
\end{equation*}

Next, let $\tau_b(x)= \inf\{t\ge0\colon X_t^b(x) <0\}$ be the ruin time
for the risk process $ (X_t^b(x) )_{t\ge0}$
defined by~\eqref{eq:6}. In what follows, we omit the dependence on $x$
and write $\tau_b$ instead of $\tau_b(x)$
when no confusion can arise.

For $\delta_0 \ge0$, the Gerber--Shiu function is defined by
\[
m(x,b) =\mathbb E \bigl[ e^{-\delta_0 \tau_b}\, w\bigl(X_{\tau
_b-}^b(x),\big|X_{\tau_b}^b(x)\big|
\bigr)\, \mathbh{1}(\tau_b<\infty) \,|\, X_0^b(x)=x
\bigr], \quad x\ge0,
\]
where $w(\cdot,\cdot)$ is a bounded non-negative measurable function,
$X_{\tau_b-}^b(x)$ is the surplus immediately before ruin
and $|X_{\tau_b}^b(x)|$ is a deficit at ruin. Note that if $w(\cdot
,\cdot) \equiv1$ and $\delta_0=0$, then $m(x,b)$ becomes
the infinite-horizon ruin probability
\[
\psi(x) =\mathbb E\bigl[\mathbh{1}(\tau_b<\infty) \,|\,
X_0^b(x)=x\bigr].
\]

For $\delta>0$, the expected discounted dividend payments until ruin
are defined by
\[
v(x,b) =\mathbb E \Biggl[ \int_0^{\tau_b}
e^{-\delta t}\, \mathrm{d}D_t \, |\, X_0^b(x)=x
\Biggr], \quad x\ge0.
\]

For simplicity of notation, we also write $m(x)$ and $v(x)$ instead of
$m(x,b)$ and $v(x,b)$, respectively, when no confusion can arise.
Moreover, we set
\begin{equation}
\label{eq:7} m(x,b)= %
\begin{cases}
m_1(x) &\text{if} \ x\in[0,b], \\
m_2(x) &\text{if} \ x\in[b,\infty),
\end{cases} %
\end{equation}
and
\begin{equation}
\label{eq:8} v(x,b)= %
\begin{cases}
v_1(x) &\text{if} \ x\in[0,b], \\
v_2(x) &\text{if} \ x\in[b,\infty).
\end{cases} %
\end{equation}
Thus, the functions $m_1(x)$ and $v_1(x)$ are defined on $[0,b]$, the
functions $m_2(x)$ and $v_2(x)$ are defined on $[b,\infty)$
and we have $m_1(b)=m_2(b)$ and $v_1(b)=v_2(b)$.

\section{Equations for the Gerber--Shiu function}
\label{sec:3}

\begin{thm}
\label{thm:1}
Let the surplus process $ (X_t^b(x) )_{t\ge0}$ follow~\eqref
{eq:6} under the above assumptions with $\theta\neq0$ and $\bar\theta
\neq0$.
Moreover, let the p.d.f.'s $f_{Y}(y)$ and $f_{\bar Y}(y)$ have the
derivatives $f'_{Y}(y)$ and $f'_{\bar Y}(y)$ on $\mathbb R_{+}$,
which are continuous and bounded on $\mathbb R_{+}$, and let
$w(u_1,u_2)$ have the second derivatives $w''_{u_1 u_1}(u_1,u_2)$,
$w''_{u_1 u_2}(u_1,u_2)$ and\break $w''_{u_2 u_2}(u_1,u_2)$ on $\mathbb
R_{+}^2$, which are continuous and bounded on $\mathbb R_{+}^2$
as functions of two variables. Then the Gerber--Shiu function $m(x)$
satisfies the equations
\begin{align}
%
&(\lambda+ \bar\lambda+
\delta_0)m_1(x)\notag\\
&\quad =\lambda \Biggl( \int
_0^x m_1(x-y)
f_{Y}(y)\, \mathrm{d}y +\int_x^{\infty}
w(x,y-x) f_{Y}(y)\, \mathrm{d}y \Biggr)
\notag\\
&\qquad+\frac{\lambda\theta(\bar\lambda+\delta_0)}{2\lambda+ \bar
\lambda+\delta_0} \Biggl( \int_0^x
m_1(x-y) h_{Y}(y)\, \mathrm{d}y +\int
_x^{\infty} w(x,y-x) h_{Y}(y)\, \mathrm{d}y
\Biggr)
\notag\\
&\qquad+\bar\lambda \Biggl( \int_0^{b-x}
m_1(x+y) f_{\bar Y}(y)\, \mathrm{d}y +\int
_{b-x}^{\infty} m_2(x+y) f_{\bar Y}(y)
\, \mathrm{d}y \Biggr)
\notag\\
&\qquad+\frac{\bar\lambda\bar\theta(\lambda+\delta_0)}{\lambda+ 2\bar
\lambda+\delta_0} \Biggl( \int_0^{b-x}
m_1(x+y) h_{\bar Y}(y)\, \mathrm{d}y +\int
_{b-x}^{\infty} m_2(x+y) h_{\bar Y}(y)
\, \mathrm{d}y \Biggr),
\notag\\
&\qquad\qquad\qquad\qquad\qquad\qquad\qquad\qquad\qquad\qquad\qquad \qquad\qquad
x\in[0,b],\label{eq:9} 
\end{align}
and
\begin{equation}
\label{eq:10} %
\begin{split} &d^3 m'''_2(x)
+(4\lambda+ 4\bar\lambda+3\delta_0) d^2
m''_2(x)
\\
&\qquad+ \bigl( (\lambda\,{+}\, 2\bar\lambda\,{+}\,\delta_0) (3\lambda\,{+}\, 2\bar
\lambda \,{+}\,2\delta_0) \!+\!(2\lambda\,{+}\, \bar\lambda\,{+}\,
\delta_0) (\lambda\,{+}\, \bar\lambda\,{+}\,\delta_0) \bigr) d
m'_2(x)
\\
&\qquad+(\lambda+ 2\bar\lambda+\delta_0) (2\lambda+ \bar\lambda+
\delta _0) (\lambda+ \bar\lambda+\delta_0)
m_2(x)
\\
&\quad =(\lambda+ 2\bar\lambda+\delta_0) (2\lambda+ \bar\lambda+
\delta _0)\beta_1(x) +(3\lambda+ 3\bar\lambda+2
\delta_0)d \beta'_1(x)
\\
&\qquad+d^2 \beta''_1(x) -2(
\lambda+ 2\bar\lambda+\delta_0) \beta_2(x) -2d
\beta'_2(x)
\\
&\qquad+2\bar\lambda^2 \bar\theta(\bar\lambda-\lambda) \int
_0^{\infty} m_2(x+y) h_{\bar Y}(y)
\, \mathrm{d}y, \quad x\in[b,\infty), \end{split} %
\end{equation}
where
\begin{align*}
%
\beta_1(x) &=\lambda\int
_{x-b}^x m_1(x-y) \bigl(
f_{Y}(y) +\theta h_{Y}(y) \bigr)\, \mathrm{d}y
\\[-2pt]
&\quad+\lambda\int_0^{x-b} m_2(x-y)
\bigl( f_{Y}(y) +\theta h_{Y}(y) \bigr)\, \mathrm{d}y
\\[-2pt]
&\quad+\lambda\int_x^{\infty} w(x,y-x) \bigl(
f_{Y}(y) +\theta h_{Y}(y) \bigr)\, \mathrm{d}y
\\[-2pt]
&\quad+\bar\lambda\int_0^{\infty}
m_2(x+y) \bigl( f_{\bar Y}(y) +\bar \theta
h_{\bar Y}(y) \bigr)\, \mathrm{d}y, \quad x\in[b,\infty),
%
\end{align*}
and
\begin{equation*}
\begin{split} \beta_2(x) &=\lambda^2 \theta
\Biggl( \int_{x-b}^x m_1(x-y)
h_{Y}(y)\, \mathrm{d}y +\int_0^{x-b}
m_2(x-y) h_{Y}(y)\, \mathrm{d}y
\\[-2pt]
&\quad+\!\!\int_x^{\infty}\! w(x,y-x)
h_{Y}(y)\, \mathrm{d}y \Biggr) \!+\!\bar\lambda^2 \bar
\theta\int_0^{\infty}\! m_2(x\,{+}\,y)
h_{\bar
Y}(y)\, \mathrm{d}y, \quad x\in[b,\infty). \end{split} %
\end{equation*}
\end{thm}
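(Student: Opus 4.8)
The plan is to derive both equations by conditioning on the epoch and type of the first event — a claim (from the stream $(Y_i,T_i)$) or a premium (from the independent stream $(\bar Y_i,\bar T_i)$) — and then exploiting the explicit Farlie--Gumbel--Morgenstern densities \eqref{eq:2}--\eqref{eq:3}. Since the two compound-Poisson streams are independent, the density that the first event is a claim of size $y$ at time $s$ equals $f_{Y,T}(y,s)e^{-\bar\lambda s}$ (no premium arrives before $s$), and symmetrically $f_{\bar Y,\bar T}(y,s)e^{-\lambda s}$ for a first premium. Weighting by the discount factor $e^{-\delta_0 s}$ and integrating over $s$ will transfer the entire time dependence of the copula into the constants appearing in \eqref{eq:9}--\eqref{eq:10}; the two regimes differ only through the surplus trajectory before the first event.

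For \eqref{eq:9} (the case $x\in[0,b]$) the surplus stays constant at $x<b$ before the first event, since no dividends are paid and there is no continuous income. Conditioning as above, a first claim of size $y$ leads to continuation from $x-y$ when $y\le x$ (contributing $m_1(x-y)$, as $x-y\le b$) and to ruin with penalty $w(x,y-x)$ when $y>x$; a first premium of size $y$ leads to continuation from $x+y$, which calls $m_1$ if $x+y\le b$ and $m_2$ otherwise, explaining the split at $b-x$. It then remains to carry out the time integrals $\int_0^\infty e^{-\delta_0 s}f_{Y,T}(y,s)e^{-\bar\lambda s}\,\mathrm ds$ and its premium analogue; by \eqref{eq:2}--\eqref{eq:3} these reduce to $\int_0^\infty e^{-(\lambda+\bar\lambda+\delta_0)s}\,\mathrm ds$ and $\int_0^\infty e^{-(2\lambda+\bar\lambda+\delta_0)s}\,\mathrm ds$ (resp. with $2\bar\lambda$), so that after multiplying through by $\lambda+\bar\lambda+\delta_0$ they produce exactly the coefficients $\lambda$, $\tfrac{\lambda\theta(\bar\lambda+\delta_0)}{2\lambda+\bar\lambda+\delta_0}$, $\bar\lambda$ and $\tfrac{\bar\lambda\bar\theta(\lambda+\delta_0)}{\lambda+2\bar\lambda+\delta_0}$ of \eqref{eq:9}.

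For \eqref{eq:10} (the case $x\ge b$) the dividend drift is active, so before the first event the surplus follows the deterministic path $u(s)=\max(x-ds,b)$, reaching the threshold at $s_0=(x-b)/d$ and being held at $b$ afterwards. Conditioning on the first event yields an integral equation for $m_2$ — here it is essential to integrate the first event directly over the whole pre-event trajectory rather than to restart the process upon reaching $b$, because the FGM dependence destroys the memoryless regeneration of the residual claim and premium sizes. On the drift-down phase $s\le s_0$ I substitute $u=x-ds$; together with \eqref{eq:2}--\eqref{eq:3} this turns each contribution into a convolution $\tfrac1d\int_b^x e^{-c_i(x-u)/d}\varPhi_i(u)\,\mathrm du$ carrying one of the three distinct rates $c_1=\lambda+\bar\lambda+\delta_0$, $c_2=2\lambda+\bar\lambda+\delta_0$, $c_3=\lambda+2\bar\lambda+\delta_0$, while the held-at-$b$ phase contributes only terms proportional to $e^{-c_i(x-b)/d}$. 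Writing $\mathcal D=\mathrm d/\mathrm dx$, I then apply the third-order operator $L=(d\mathcal D+c_1)(d\mathcal D+c_2)(d\mathcal D+c_3)$, whose expansion matches coefficientwise the differential polynomial on the left of \eqref{eq:10} (indeed $c_1c_2c_3$, $c_1+c_2+c_3=4\lambda+4\bar\lambda+3\delta_0$ and $c_1c_2+c_1c_3+c_2c_3$ reproduce the stated coefficients of $m_2$, $d^2m_2''$ and $dm_2'$). Each factor $d\mathcal D+c_i$ both annihilates the held-at-$b$ exponential $e^{-c_i(x-b)/d}$ and collapses the $c_i$-convolution to $d\,\varPhi_i(x)$, while the remaining two factors differentiate the surviving integrands; regrouping the outputs reproduces $\beta_1,\beta_1',\beta_1''$, $\beta_2,\beta_2'$ together with the residual term $2\bar\lambda^2\bar\theta(\bar\lambda-\lambda)\int_0^\infty m_2(x+y)h_{\bar Y}(y)\,\mathrm dy$.

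The routine but delicate part is this last regrouping: because the three convolutions carry three different rates, applying the full product $L$ forces one to track how $(d\mathcal D+c_2)(d\mathcal D+c_3)$ acts on the $c_1$-integrand, and so on for each choice, and then to reassemble the resulting pieces into the compact combination $\beta_1,\beta_1',\beta_1'',\beta_2,\beta_2'$ plus the leftover $h_{\bar Y}$-integral. The smoothness hypotheses on $f_Y,f_{\bar Y}$ and $w$ are exactly what permit differentiating the surplus-dependent integrals up to three times under the integral sign, which legitimizes these manipulations; this coefficient bookkeeping, rather than any conceptual step, is the main obstacle.
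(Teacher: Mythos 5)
Your proposal is correct and follows essentially the same route as the paper's proof: a first-jump (law of total probability) decomposition using the FGM joint densities, the substitution $u=x-ds$ on the drift phase yielding convolutions at the three rates $c_1,c_2,c_3$ plus held-at-$b$ exponentials, and then elimination of these terms by the factored operator $(d\mathcal{D}+c_1)(d\mathcal{D}+c_2)(d\mathcal{D}+c_3)$ — which the paper applies sequentially, producing its intermediate equations \eqref{eq:21} and \eqref{eq:23}, with exactly the smoothness bootstrapping you invoke; the cross-action of each factor on the other rates' terms is indeed what generates the $(\bar\lambda-\lambda)$ residual term in \eqref{eq:10}. (Only a trivial normalization slip: with your $\tfrac{1}{d}$-prefactored convolution, $(d\mathcal{D}+c_i)$ collapses it to $\varPhi_i(x)$, not $d\,\varPhi_i(x)$.)
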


\begin{proof}
It is easily seen that the time of the first jump of $
(X_t^b(x) )_{t\ge0}$, $T_1 \wedge\bar T_1$, is exponentially
distributed with mean $1/(\lambda+\bar\lambda)$. Furthermore, $\mathbb
P[T_1 \wedge\bar T_1 =T_1] =\lambda/(\lambda+\bar\lambda)$
and $\mathbb P[T_1 \wedge\bar T_1 =\bar T_1] =\bar\lambda/(\lambda
+\bar\lambda)$.

We first deal with the case $x\in[0,b]$. Considering the time and the
size of the first jump of $ (X_t^b(x) )_{t\ge0}$
and applying the law of total probability we obtain
\begin{align}
%
m(x)&=\int_0^{\infty}
e^{-(\lambda+\bar\lambda)t} \Biggl( \lambda\int_0^x
e^{-\delta_0 t} m(x-y) f_{Y|T}(y \,|\, t)\, \mathrm{d}y
\notag\\[-2pt]
&\quad+\lambda\int_x^{\infty} e^{-\delta_0 t}
w(x,y-x) f_{Y|T}(y \, |\, t)\, \mathrm{d}y
\label{eq:11}\\
&\quad+\bar\lambda\int_0^{\infty} e^{-\delta_0 t}
m(x+y) f_{\bar Y |
\bar T}(y \,|\, t)\, \mathrm{d}y \Biggr) \mathrm{d}t, \quad x
\in[0,b].\notag
\end{align}

Substituting~\eqref{eq:4} and~\eqref{eq:5} into~\eqref{eq:11} and
taking into account~\eqref{eq:7} give
\begin{equation}
\label{eq:12} %
\begin{split} m_1(x)&=\int
_0^{\infty} e^{-(\lambda+\bar\lambda+\delta_0)t} \Biggl( \lambda\int
_0^x m_1(x-y) \bigl(
f_{Y}(y) +\theta h_{Y}(y) \bigl(2e^{-\lambda t}-1\bigr)
\bigr)\, \mathrm {d}y
\\[-2pt]
&\quad+\lambda\int_x^{\infty} w(x,y-x) \bigl(
f_{Y}(y) +\theta h_{Y}(y) \bigl(2e^{-\lambda t}-1\bigr)
\bigr)\, \mathrm{d}y
\\[-2pt]
&\quad+\bar\lambda\int_0^{b-x}
m_1(x+y) \bigl( f_{\bar Y}(y) +\bar \theta
h_{\bar Y}(y) \bigl(2e^{-\bar\lambda t}-1\bigr) \bigr)\, \mathrm{d}y
\\[-2pt]
&\quad+\bar\lambda\int_{b-x}^{\infty}
m_2(x+y) \bigl( f_{\bar Y}(y) +\bar\theta h_{\bar Y}(y)
\bigl(2e^{-\bar\lambda t}-1\bigr) \bigr)\, \mathrm{d}y \Biggr) \mathrm{d}t, \quad x
\in[0,b]. \end{split} %
\end{equation}

Separating the integrals on the right-hand side of~\eqref{eq:12} into
integrals w.r.t. either $t$ or $y$ yields
\begin{equation}
\label{eq:13} %
\begin{split} m_1(x)&=\lambda\int
_0^{\infty} e^{-(\lambda+\bar\lambda+\delta
_0)t}\, \mathrm{d}t\, \Biggl(
\int_0^x m_1(x-y)
f_{Y}(y)\, \mathrm{d}y
\\[-2pt]
&\quad\qquad\qquad+\int_x^{\infty} w(x,y-x)
f_{Y}(y)\, \mathrm{d}y \Biggr)
\\[-2pt]
&\quad+\lambda\theta\int_0^{\infty}
e^{-(\lambda+\bar\lambda+\delta
_0)t}\, \bigl(2e^{-\lambda t}-1\bigr)\, \mathrm{d}t\, \Biggl( \int
_0^x m_1(x-y)
h_{Y}(y)\, \mathrm{d}y
\\[-2pt]
&\quad\qquad\qquad+\int_x^{\infty} w(x,y-x)
h_{Y}(y)\, \mathrm{d}y \Biggr)
\\[-2pt]
&\quad+\bar\lambda\int_0^{\infty} e^{-(\lambda+\bar\lambda+\delta
_0)t}
\, \mathrm{d}t\, \Biggl( \int_0^{b-x}
m_1(x+y) f_{\bar Y}(y)\, \mathrm{d}y
\\[-2pt]
&\quad\qquad\qquad+\int_{b-x}^{\infty}
m_2(x+y) f_{\bar Y}(y)\, \mathrm {d}y \Biggr)
\\[-2pt]
&\quad+\bar\lambda\bar\theta\int_0^{\infty}
e^{-(\lambda+\bar
\lambda+\delta_0)t}\, \bigl(2e^{-\bar\lambda t}-1\bigr)\, \mathrm{d}t\, \Biggl( \int
_0^{b-x}\! m_1(x+y)
h_{\bar Y}(y)\, \mathrm{d}y
\\[-2pt]
&\quad\qquad\qquad+\int_{b-x}^{\infty}
m_2(x+y) h_{\bar Y}(y)\, \mathrm {d}y \Biggr), \quad x\in[0,b].
\end{split} %
\end{equation}

Taking the integrals w.r.t. $t$ on the right-hand side of~\eqref{eq:13}
we get
\begin{align*}
 m_1(x)&=\frac{\lambda}{\lambda+\bar\lambda+\delta_0}\, \Biggl( \int
_0^x m_1(x-y)
f_{Y}(y)\, \mathrm{d}y +\int_x^{\infty}
w(x,y-x) f_{Y}(y)\, \mathrm{d}y \Biggr)
\\
&\quad+\frac{\lambda\theta(\bar\lambda+\delta_0)}{(2\lambda+\bar
\lambda+\delta_0) (\lambda+\bar\lambda+\delta_0)}\, \Biggl( \int_0^x
m_1(x-y) h_{Y}(y)\, \mathrm{d}y
\\
&\quad\qquad\qquad+\int_x^{\infty} w(x,y-x)
h_{Y}(y)\, \mathrm{d}y \Biggr)
\\
&\quad+\frac{\bar\lambda}{\lambda+\bar\lambda+\delta_0}\, \Biggl( \int_0^{b-x}
m_1(x+y) f_{\bar Y}(y)\, \mathrm{d}y +\int
_{b-x}^{\infty} m_2(x+y) f_{\bar Y}(y)
\, \mathrm{d}y \Biggr)
\\
&\quad+\frac{\bar\lambda\bar\theta(\lambda+\delta_0)}{(\lambda
+2\bar\lambda+\delta_0) (\lambda+\bar\lambda+\delta_0)}\, \Biggl( \int_0^{b-x}
\! m_1(x+y) h_{\bar Y}(y)\, \mathrm{d}y
\\
&\quad\qquad\qquad+\int_{b-x}^{\infty}
m_2(x+y) h_{\bar Y}(y)\, \mathrm {d}y \Biggr), \quad x\in[0,b],
\end{align*}
which yields~\eqref{eq:9}.

Let now $x\in[b,\infty)$. Considering the time and the size of the
first jump of $ (X_t^b(x) )_{t\ge0}$
and applying the law of total probability we have
\begin{equation}
\label{eq:14} %
\begin{split} m(x)&=\int_0^{(x-b)/d}
e^{-(\lambda+\bar\lambda)t} \Biggl( \lambda\int_0^{x-dt}
e^{-\delta_0 t} m(x-dt-y) f_{Y|T}(y \,|\, t)\, \mathrm{d}y
\\
&\quad\qquad+\lambda\int_{x-dt}^{\infty} e^{-\delta_0 t}
w(x-dt,y-x-dt) f_{Y|T}(y \,|\, t)\, \mathrm{d}y
\\
&\quad\qquad+\bar\lambda\int_0^{\infty}
e^{-\delta_0 t} m(x-dt+y) f_{\bar Y | \bar T}(y \,|\, t)\, \mathrm{d}y \Biggr)
\mathrm{d}t
\\
&\quad+\int_{(x-b)/d}^{\infty} e^{-(\lambda+\bar\lambda)t} \Biggl(
\lambda\int_0^b e^{-\delta_0 t} m(b-y)
f_{Y|T}(y \,|\, t)\, \mathrm {d}y
\\
&\quad\qquad+\lambda\int_b^{\infty} e^{-\delta_0 t}
w(b,y-b) f_{Y|T}(y \,|\, t)\, \mathrm{d}y
\\
&\quad\qquad+\bar\lambda\int_0^{\infty}
e^{-\delta_0 t} m(b+y) f_{\bar Y | \bar T}(y \,|\, t)\, \mathrm{d}y \Biggr)
\mathrm{d}t, \quad x\in[b,\infty). \end{split} %
\end{equation}

Substituting~\eqref{eq:4} and~\eqref{eq:5} into~\eqref{eq:14} and
taking into account~\eqref{eq:7} give
\begin{equation}
\label{eq:15} m_2(x) =I_{1,2,3}(x) +I_{4,5,6}(x),
\quad x\in[b,\infty),
\end{equation}
where
\begin{align*}
I_{1,2,3}(x)&=\int_0^{(x-b)/d}
e^{-(\lambda+\bar\lambda+\delta_0)t}
\\
&\quad\times \Biggl( \lambda\int_0^{x-dt-b}
m_2(x-dt-y) \bigl( f_{Y}(y) +\theta h_{Y}(y)
\bigl(2e^{-\lambda t}-1\bigr) \bigr)\, \mathrm{d}y
\\
&\quad\;\; +\lambda\int_{x-dt-b}^{x-dt}
m_1(x-dt-y) \bigl( f_{Y}(y) +\theta
h_{Y}(y) \bigl(2e^{-\lambda t}-1\bigr) \bigr)\, \mathrm{d}y
\\
&\quad\;\; +\lambda\int_{x-dt}^{\infty} w(x-dt,y-x+dt)
\bigl( f_{Y}(y) +\theta h_{Y}(y) \bigl(2e^{-\lambda t}-1
\bigr) \bigr)\, \mathrm{d}y
\\
&\quad\;\; +\bar\lambda\int_{0}^{\infty}
m_2(x-dt+y) \bigl( f_{\bar
Y}(y) +\bar\theta
h_{\bar Y}(y) \bigl(2e^{-\bar\lambda t}-1\bigr) \bigr)\, \mathrm{d}y \Biggr)
\mathrm{d}t 
\end{align*}
and
\begin{align*}
I_{4,5,6}(x)&=\int_{(x-b)/d}^{\infty}
e^{-(\lambda+\bar\lambda+\delta
_0)t}
\\
&\quad\times \Biggl( \lambda\int_0^{b}
m_1(b-y) \bigl( f_{Y}(y) +\theta h_{Y}(y)
\bigl(2e^{-\lambda t}-1\bigr) \bigr)\, \mathrm{d}y
\\
&\quad\;\; +\lambda\int_{b}^{\infty} w(b,y-b) \bigl(
f_{Y}(y) +\theta h_{Y}(y) \bigl(2e^{-\lambda t}-1\bigr)
\bigr)\, \mathrm{d}y
\\
&\quad\;\; +\bar\lambda\int_{0}^{\infty}
m_2(b+y) \bigl( f_{\bar Y}(y) +\bar\theta h_{\bar Y}(y)
\bigl(2e^{-\bar\lambda t}-1\bigr) \bigr)\, \mathrm{d}y \Biggr) \mathrm{d}t.
\end{align*}

Changing the variable $x-dt=s$ in the outer integral of the expression
for $I_{1,2,3}(x)$ yields
\begin{equation}
\label{eq:16} %
\begin{split} I_{1,2,3}(x)&=\frac{1}{d}
\int_b^x e^{-(\lambda+\bar\lambda+\delta
_0)(x-s)/d}
\\
&\quad\times \Biggl( \lambda\int_0^{s-b}
m_2(s-y) \bigl( f_{Y}(y) +\theta h_{Y}(y)
\bigl(2e^{-\lambda(x-s)/d}-1\bigr) \bigr)\, \mathrm{d}y
\\
&\quad +\lambda\int_{s-b}^{s}
m_1(s-y) \bigl( f_{Y}(y) +\theta h_{Y}(y)
\bigl(2e^{-\lambda(x-s)/d}-1\bigr) \bigr)\, \mathrm{d}y
\\
&\quad +\lambda\int_{s}^{\infty} w(s,y-s) \bigl(
f_{Y}(y) +\theta h_{Y}(y) \bigl(2e^{-\lambda(x-s)/d}-1\bigr)
\bigr)\, \mathrm{d}y
\\
&\quad +\bar\lambda\int_{0}^{\infty}
m_2(s\,{+}\,y) \bigl( f_{\bar Y}(y) \,{+}\,\bar\theta h_{\bar Y}(y)
\bigl(2e^{-\bar\lambda(x-s)/d}\,{-}\,1\bigr) \bigr)\, \mathrm{d}y \Biggr) \mathrm{d}s
\\
&=\frac{1}{d}\, e^{-(\lambda+\bar\lambda+\delta_0)x/d}\, I_1(x) +
\frac
{2}{d}\, e^{-(2\lambda+\bar\lambda+\delta_0)x/d}\, I_2(x)
\\
&\quad +\frac{2}{d}\, e^{-(\lambda+2\bar\lambda+\delta_0)x/d}\, I_3(x), \quad x
\in[b,\infty), \end{split} %
\end{equation}
where
\begin{align*}
I_1(x)&=\int_b^x
e^{(\lambda+\bar\lambda+\delta_0)s/d} \Biggl( \lambda\int_0^{s-b}
m_2(s-y) \bigl( f_{Y}(y) -\theta h_{Y}(y)
\bigr)\, \mathrm{d}y
\\
&\quad+\lambda\int_{s-b}^{s} m_1(s-y)
\bigl( f_{Y}(y) -\theta h_{Y}(y) \bigr)\, \mathrm{d}y
\\
&\quad+\lambda\int_{s}^{\infty} w(s,y-s) \bigl(
f_{Y}(y) -\theta h_{Y}(y) \bigr)\, \mathrm{d}y
\\
&\quad+\bar\lambda\int_{0}^{\infty}
m_2(s+y) \bigl( f_{\bar Y}(y) -\bar\theta h_{\bar Y}(y)
\bigr)\, \mathrm{d}y \Biggr) \mathrm{d}s,\\ 
I_2(x)&=\lambda\theta\int
_b^x e^{(2\lambda+\bar\lambda+\delta_0)s/d} \Biggl( \int
_0^{s-b} m_2(s-y) h_{Y}(y)
\, \mathrm{d}y
\\
&\quad+\int_{s-b}^{s} m_1(s-y)
h_{Y}(y)\, \mathrm{d}y +\int_{s}^{\infty}
w(s,y-s) h_{Y}(y)\, \mathrm{d}y \Biggr) \mathrm{d}s
%
\end{align*}
and
\begin{align*}
%
I_3(x)&=\bar\lambda\bar\theta\int
_b^x e^{(\lambda+2\bar\lambda
+\delta_0)s/d} \Biggl( \int
_0^{\infty} m_2(s+y) h_{\bar Y}(y)
\, \mathrm{d}y \Biggr) \mathrm{d}s.
\end{align*}

Separating the integrals in the expression for $I_{4,5,6}(x)$ into
integrals w.r.t. either $t$ or $y$ we get
\begin{equation}
\label{eq:17} %
\begin{split} I_{4,5,6}(x)&=\lambda\int
_{(x-b)/d}^{\infty} e^{-(\lambda+\bar\lambda
+\delta_0)t}\, \mathrm{d}t\, \Biggl(
\int_0^b m_1(b-y)
f_{Y}(y)\, \mathrm{d}y
\\
&\quad\qquad\qquad+\int_b^{\infty} w(b,y-b)
f_{Y}(y)\, \mathrm{d}y \Biggr)
\\
&\quad+\lambda\theta\int_{(x-b)/d}^{\infty} e^{-(\lambda+\bar\lambda
+\delta_0)t}
\, \bigl(2e^{-\lambda t}-1\bigr)\, \mathrm{d}t\, \Biggl( \int
_0^b m_1(b-y)
h_{Y}(y)\, \mathrm{d}y
\\
&\quad\qquad\qquad+\int_b^{\infty} w(b,y-b)
h_{Y}(y)\, \mathrm{d}y \Biggr)
\\
&\quad+\bar\lambda\int_{(x-b)/d}^{\infty} e^{-(\lambda+\bar\lambda
+\delta_0)t}\,
\mathrm{d}t\; \int_0^{\infty} m_2(x+y)
f_{\bar Y}(y)\, \mathrm{d}y
\\
&\quad+\bar\lambda\bar\theta\int_{(x-b)/d}^{\infty}
e^{-(\lambda
+\bar\lambda+\delta_0)t}\, \bigl(2e^{-\bar\lambda t}-1\bigr)\, \mathrm{d}t\; \int
_0^{\infty} m_2(b+y) h_{\bar Y}(y)
\, \mathrm{d}y. \end{split} %
\end{equation}

Taking the integrals w.r.t. $t$ on the right-hand side of~\eqref{eq:17}
we obtain
\begin{equation}
\label{eq:18} I_{4,5,6}(x) =I_4(x) +I_5(x)
+I_6(x), \quad x\in[b,\infty),
\end{equation}
where
\begin{align*}
I_4(x)&=\frac{e^{-(\lambda+\bar\lambda+\delta_0)(x-b)/d}}{\lambda
+\bar\lambda+\delta_0}
\\
&\quad\times \Biggl( \lambda\int_0^b
m_1(b-y) f_{Y}(y)\, \mathrm{d}y +\lambda\int
_b^{\infty} w(b,y-b) f_{Y}(y)\, \mathrm{d}y
\\
&\quad -\lambda\theta\int_0^b
m_1(b-y) h_{Y}(y)\, \mathrm{d}y -\lambda\theta\int
_b^{\infty} w(b,y-b) h_{Y}(y)\, \mathrm{d}y
\\
&\quad +\bar\lambda\int_0^{\infty}
m_2(b+y) f_{\bar Y}(y)\, \mathrm{d}y -\bar\lambda\bar\theta
\int_0^{\infty} m_2(b+y)
h_{\bar Y}(y)\, \mathrm {d}y \Biggr), \\
I_5(x)&=\frac{2\lambda\theta e^{-(2\lambda+\bar\lambda+\delta
_0)(x-b)/d}}{2\lambda+\bar\lambda+\delta_0}
\\
&\quad\times \Biggl( \int_0^b
m_1(b-y) h_{Y}(y)\, \mathrm{d}y +\int
_b^{\infty} w(b,y-b) h_{Y}(y)\, \mathrm{d}y
\Biggr) 
\end{align*}
and
\[
I_6(x)=\frac{2 \bar\lambda\bar\theta e^{-(\lambda+2\bar\lambda+\delta
_0)(x-b)/d}}{\lambda+2\bar\lambda+\delta_0}\, \int_0^{\infty}
m_2(b+y) h_{\bar Y}(y)\, \mathrm{d}y.
\]

Thus, substituting~\eqref{eq:16} and~\eqref{eq:18} into~\eqref{eq:15}
we have
\begin{equation}
\label{eq:19} %
\begin{split} m_2(x)&=\frac{1}{d}\,
e^{-(\lambda+\bar\lambda+\delta_0)x/d}\, I_1(x) +\frac{2}{d}\, e^{-(2\lambda+\bar\lambda+\delta_0)x/d}\,
I_2(x)
\\
&\quad+\frac{2}{d}\, e^{-(\lambda+2\bar\lambda+\delta_0)x/d}\, I_3(x)
+I_4(x) +I_5(x) +I_6(x), \quad x\in[b,
\infty). \end{split} %
\end{equation}

It is easily seen from~\eqref{eq:9} that $m_1(x)$ is continuous on
$[0,b]$, and from~\eqref{eq:19} we conclude that $m_2(x)$
is continuous on $[b,\infty)$. Indeed, the right-hand sides of~\eqref
{eq:9} and~\eqref{eq:19} are continuous on $[0,b]$ and
$[b,\infty)$, respectively, and so are the left-hand sides.
Therefore, from~\eqref{eq:19} we deduce that $m_2(x)$ is differentiable
on $[b,\infty)$.
Differentiating~\eqref{eq:19} gives
\begin{equation}
\label{eq:20} %
\begin{split} m'_2(x)&=-
\frac{\lambda+\bar\lambda+\delta_0}{d^2}\, e^{-(\lambda
+\bar\lambda+\delta_0)x/d}\, I_1(x)
\\
&\quad-\frac{2(2\lambda+\bar\lambda+\delta_0)}{d^2}\, e^{-(2\lambda
+\bar\lambda+\delta_0)x/d}\, I_2(x)
\\
&\quad-\frac{2(\lambda+2\bar\lambda+\delta_0)}{d^2}\, e^{-(\lambda
+2\bar\lambda+\delta_0)x/d}\, I_3(x)
\\
&\quad-\frac{\lambda+\bar\lambda+\delta_0}{d}\, I_4(x) -\frac
{2\lambda+\bar\lambda+\delta_0}{d}\,
I_5(x)
\\
&\quad-\frac{\lambda+2\bar\lambda+\delta_0}{d}\, I_6(x) +\frac
{1}{d}\,
\beta_1(x), \quad x\in[b,\infty), \end{split} %
\end{equation}
where the function $\beta_1(x)$ is defined in the assertion of the theorem.

Multiplying~\eqref{eq:19} by $(\lambda+\bar\lambda+\delta_0)/d$ and
adding~\eqref{eq:20} we get
\begin{equation}
\label{eq:21} %
\begin{split} &dm'_2(x) +(
\lambda+\bar\lambda+\delta_0) m_2(x) =-
\frac{2\lambda
}{d}\, e^{-(2\lambda+\bar\lambda+\delta_0)x/d}\, I_2(x)
\\
&\quad-\frac{2 \bar\lambda}{d}\, e^{-(\lambda+2\bar\lambda+\delta
_0)x/d}\, I_3(x) -\lambda
I_5(x) -\bar\lambda I_6(x) +\beta_1(x),
\quad x\in[b,\infty). \end{split} %
\end{equation}

Since $m_1(x)$ and $m_2(x)$ are continuous and bounded on $[0,b]$ and
$[b,\infty)$, respectively, and $w(u_1,u_2)$ is
continuous and bounded on $\mathbb R_{+}^2$ as a function of two
variables, from~\eqref{eq:21} we conclude that so is
$m'_2(x)$ on $[b,\infty)$.
Taking into account that $f'_{Y}(y)$ and $f'_{\bar Y}(y)$ are
continuous and bounded on $\mathbb R_{+}$, and
$w'_{u_1}(u_1,u_2)$ and $w'_{u_2}(u_1,u_2)$ are continuous and bounded
on $\mathbb R_{+}^2$, from~\eqref{eq:9}
we conclude that so is $m'_1(x)$ on $[0,b]$.
Hence, $\beta_1(x)$ is differentiable on $[b,\infty)$. From this
and~\eqref{eq:21} it follows that $m_2(x)$ is
twice differentiable on $[b,\infty)$. Differentiating~\eqref{eq:21} gives
\begin{equation}
\label{eq:22} %
\begin{split} &dm''_2(x)
+(\lambda+\bar\lambda+\delta_0) m'_2(x) =
\frac{2\lambda
(2\lambda+\bar\lambda+\delta_0)}{d^2}\, e^{-(2\lambda+\bar\lambda+\delta_0)x/d}\, I_2(x)
\\
&\quad+\frac{2 \bar\lambda(\lambda+2\bar\lambda+\delta_0)}{d^2}\, e^{-(\lambda+2\bar\lambda+\delta_0)x/d}\, I_3(x) +
\frac{\lambda(2\lambda+\bar\lambda+\delta_0)}{d}\, I_5(x)
\\
&\quad+\frac{\bar\lambda(\lambda+2\bar\lambda+\delta_0)}{d}\, I_6(x) +\beta'_1(x)
-\frac{2}{d}\, \beta_2(x), \quad x\in[b,\infty), \end{split}
\end{equation}
where
\begin{align*}
\beta'_1(x) &=\lambda\int
_{x-b}^x m'_1(x-y) \bigl(
f_{Y}(y) +\theta h_{Y}(y) \bigr)\, \mathrm{d}y
\\
&\quad+\lambda\int_0^{x-b} m'_2(x-y)
\bigl( f_{Y}(y) +\theta h_{Y}(y) \bigr)\, \mathrm{d}y
\\
&\quad+\lambda\int_x^{\infty} \bigl(
w'_{u_1}(x,y-x) -w'_{u_2}(x,y-x)
\bigr) \bigl( f_{Y}(y) +\theta h_{Y}(y) \bigr)\,
\mathrm{d}y
\\
&\quad+\bar\lambda\int_0^{\infty}
m'_2(x+y) \bigl( f_{\bar Y}(y) +\bar \theta
h_{\bar Y}(y) \bigr)\, \mathrm{d}y
\\
&\quad+\lambda \bigl( m_1(0) -w(x,0) \bigr) \bigl(
f_{Y}(x) +\theta h_{Y}(x) \bigr), \quad x\in[b,\infty),
\end{align*}
which is continuous and bounded on $[b,\infty)$, and the function $\beta
_2(x)$ is defined in the assertion of the theorem.
Here $w'_{u_1}(\cdot,\cdot)$ and $w'_{u_2}(\cdot,\cdot)$ stand for the
partial derivatives of $w(u_1,u_2)$ w.r.t.
the first and the second variables, respectively.

Multiplying~\eqref{eq:21} by $(2\lambda+\bar\lambda+\delta_0)/d$ and
adding~\eqref{eq:22} we obtain
\begin{equation}
\label{eq:23} %
\begin{split} &d^2 m''_2(x)
+(3\lambda+2\bar\lambda+2\delta_0)d m'_2(x)
+(2\lambda +\bar\lambda+\delta_0) (\lambda+\bar\lambda+
\delta_0) m_2(x)
\\
&\quad=\frac{2 \bar\lambda(\bar\lambda-\lambda)}{d}\, e^{-(\lambda
+2\bar\lambda+\delta_0)x/d}\, I_3(x) +\bar\lambda(
\bar\lambda-\lambda) I_6(x)
\\
&\qquad+(2\lambda+\bar\lambda+\delta_0) \beta_1(x)
+d\beta'_1(x) -2\beta_2(x), \quad x\in[b,
\infty). \end{split} %
\end{equation}

It is easily seen from~\eqref{eq:23} that $m''_2(x)$ is continuous and
bounded on $[b,\infty)$.
Taking into account that $f'_{Y}(y)$ and $f'_{\bar Y}(y)$ are
continuous and bounded on $\mathbb R_{+}$, and
$w''_{u_1 u_1}(u_1,u_2)$, $w''_{u_1 u_2}(u_1,u_2)$ and $w''_{u_2
u_2}(u_1,u_2)$ are continuous and bounded on $\mathbb R_{+}^2$,
from~\eqref{eq:9} we conclude that so is $m''_1(x)$ on $[0,b]$.
Hence, $\beta_1(x)$ is twice differentiable on $[b,\infty)$.
Moreover, applying similar arguments shows that $\beta_2(x)$ is
differentiable on $[b,\infty)$.
From this and~\eqref{eq:23} it follows that $m_2(x)$ has the third
derivative on $[b,\infty)$.
Differentiating~\eqref{eq:23} gives
\begin{equation}
\label{eq:24} %
\begin{split} &d^2 m'''_2(x)
+(3\lambda+2\bar\lambda+2\delta_0)d m''_2(x)
+(2\lambda+\bar\lambda+\delta_0) (\lambda+\bar\lambda+
\delta_0) m'_2(x)
\\
&\quad=-\frac{2 \bar\lambda(\bar\lambda-\lambda) (\lambda+2\bar
\lambda+\delta_0)}{d^2}\, e^{-(\lambda+2\bar\lambda+\delta_0)x/d}\, I_3(x)
\\
&\qquad-\frac{\bar\lambda(\bar\lambda-\lambda) (\lambda+2\bar
\lambda+\delta_0)}{d}\, I_6(x) +(2\lambda+\bar\lambda+
\delta_0) \beta'_1(x) +d
\beta''_1(x)
\\
&\qquad-2\beta'_2(x) +\frac{2 \bar\lambda^2 \bar\theta(\bar\lambda
-\lambda)}{d}\, \int
_0^{\infty} m_2(x+y) h_{\bar Y}(y)
\, \mathrm{d}y, \quad x\in[b,\infty). \end{split} %
\end{equation}

Multiplying~\eqref{eq:23} by $(\lambda+2\bar\lambda+\delta_0)/d$ and
adding~\eqref{eq:24} yield~\eqref{eq:10},
which completes the proof.
\end{proof}

\begin{remark}
\label{rem:1}
To solve equations~\eqref{eq:9} and~\eqref{eq:10}, we need some
boundary conditions. The first one is $m_1(b)=m_2(b)$.
Next, using standard considerations (see, e.g., \cite{MiRa2016,MiRaSt2015,RoScScTe1999}) we can show that
$\lim_{x\to\infty} m_2(x) =0$ provided that the net profit condition holds.
Finally, we can substitute $x=b$ into the intermediate equations (e.g.,
equation~\eqref{eq:21}) to get additional boundary conditions
involving derivatives of $m_2(x)$. Furthermore, equations~\eqref{eq:9}
and~\eqref{eq:10} are not solvable analytically in the general case,
so we can use, for instance, numerical methods. Nevertheless, we can
give explicit expressions for $m(x)$ in some particular cases
(see Section~\ref{sec:5}).
The uniqueness of the required solutions to these equations should be
justified in each case.
\end{remark}

\begin{remark}
\label{rem:2}
The corresponding model without dividend payments is obtained by $b\to
\infty$. In this case, the Gerber--Shiu function $m(x)$ satisfies
the integral equation
\begin{equation}
\label{eq:25} %
\begin{split}
&(\lambda+ \bar\lambda+
\delta_0)m(x)\\
&\quad =\lambda \Biggl( \int_0^x
m(u) f_{Y}(x-u)\, \mathrm{d}u +\int_0^{\infty
}
w(x,u) f_{Y}(x+u)\, \mathrm{d}u \Biggr)
\\
&\qquad+\frac{\lambda\theta(\bar\lambda+\delta_0)}{2\lambda\,{+}\, \bar
\lambda\,{+}\,\delta_0} \Biggl( \int_0^x
m(u) h_{Y}(x\,{-}\,u)\, \mathrm{d}u \,{+}\int_0^{\infty}
w(x,u) h_{Y}(x\,{+}\,u)\, \mathrm{d}u \Biggr)
\\
&\qquad+\bar\lambda\int_x^{\infty} m(u)
f_{\bar Y}(u-x)\, \mathrm{d}u +\frac{\bar\lambda\bar\theta(\lambda+\delta_0)}{\lambda+ 2\bar\lambda
+\delta_0}\, \int
_x^{\infty} m(u) h_{\bar Y}(u-x)\, \mathrm{d}u,
\\
&\qquad\qquad\qquad\qquad\qquad\qquad\qquad\qquad\qquad\qquad\qquad \qquad\qquad
x\in[0,\infty). \end{split} %
\end{equation}
Note that equation~\eqref{eq:25} for the ruin probability coincides
with the equation derived in \cite{Boi2003} (see also \cite{MiRa2016})
if $\theta=0$ and $\bar\theta=0$.
\end{remark}

\begin{remark}
\label{rem:3}
In Theorem~\ref{thm:1}, we assume that $\theta\neq0$ and $\bar\theta
\neq0$. Otherwise, we do not need to differentiate~\eqref{eq:19}
three times and can obtain equations not involving the third derivative
of $m_2(x)$ instead of~\eqref{eq:10}.

Thus, if $\theta=0$ and $\bar\theta=0$, from~\eqref{eq:21} we have
\begin{equation}
\label{eq:26} dm'_2(x) +(\lambda+\bar\lambda+
\delta_0) m_2(x) =\beta_1(x), \quad x
\in[b,\infty).
\end{equation}

Next, if $\theta\neq0$ and $\bar\theta=0$, from~\eqref{eq:23} we have
\begin{equation}
\label{eq:27} %
\begin{split} &d^2 m''_2(x)
+(3\lambda+2\bar\lambda+2\delta_0)d m'_2(x)
+(2\lambda +\bar\lambda+\delta_0) (\lambda+\bar\lambda+
\delta_0) m_2(x)
\\
&\quad=(2\lambda+\bar\lambda+\delta_0) \beta_1(x) +d
\beta'_1(x) -2\beta_2(x), \quad x\in[b,
\infty). \end{split} %
\end{equation}

Finally, if $\theta=0$ and $\bar\theta\neq0$, multiplying~\eqref
{eq:21} by $(\lambda+2\bar\lambda+\delta_0)/d$ and adding~\eqref{eq:22}
we get
\begin{equation}
\label{eq:28} %
\begin{split} &d^2 m''_2(x)
+(2\lambda+3\bar\lambda+2\delta_0)d m'_2(x)
+(\lambda +2\bar\lambda+\delta_0) (\lambda+\bar\lambda+
\delta_0) m_2(x)
\\
&\quad=(\lambda+2\bar\lambda+\delta_0) \beta_1(x) +d
\beta'_1(x) -2\beta_2(x), \quad x\in[b,
\infty). \end{split} %
\end{equation}
Note that to obtain~\eqref{eq:26}--\eqref{eq:28}, it is enough to have
weaker smoothness assumptions on $f_{Y}(y)$, $f_{\bar Y}(y)$
and $w(u_1,u_2)$.

Equation~\eqref{eq:9} holds in all possible cases.
Furthermore, \eqref{eq:9} involves no derivatives and holds under
weaker assumptions than~\eqref{eq:10}.
To be more precise, we do not need the differentiability of $f_{Y}(y)$,
$f_{\bar Y}(y)$ and $w(u_1,u_2)$ to get~\eqref{eq:9}.
\end{remark}

\section{Equations for the expected discounted dividend payments until ruin}
\label{sec:4}

\begin{thm}
\label{thm:2}
Let the surplus process $ (X_t^b(x) )_{t\ge0}$ follow~\eqref
{eq:6} under the above assumptions with $\theta\neq0$ and $\bar\theta
\neq0$.
Moreover, let the p.d.f.'s $f_{Y}(y)$ and $f_{\bar Y}(y)$ have the
derivatives $f'_{Y}(y)$ and $f'_{\bar Y}(y)$ on $\mathbb R_{+}$,
which are continuous and bounded on $\mathbb R_{+}$.
Then the expected discounted dividend payments until ruin $v(x)$
satisfy the equations\vadjust{\goodbreak}
\begin{equation}
\label{eq:29} %
\begin{split} &(\lambda+ \bar\lambda+
\delta)v_1(x)
\\[-2pt]
&\quad=\lambda\int_0^x v_1(x-y)
f_{Y}(y)\, \mathrm{d}y +\frac{\lambda\theta(\bar\lambda+\delta)}{2\lambda+ \bar\lambda+\delta
}\, \int
_0^x v_1(x-y)
h_{Y}(y)\, \mathrm{d}y
\\[-2pt]
&\qquad+\bar\lambda \Biggl( \int_0^{b-x}
v_1(x+y) f_{\bar Y}(y)\, \mathrm{d}y +\int
_{b-x}^{\infty} v_2(x+y) f_{\bar Y}(y)
\, \mathrm{d}y \Biggr)
\\[-2pt]
&\qquad+\frac{\bar\lambda\bar\theta(\lambda+\delta)}{\lambda\,{+}\, 2\bar
\lambda\,{+}\,\delta} \Biggl( \int_0^{b-x}
v_1(x\,{+}\,y) h_{\bar Y}(y)\, \mathrm{d}y \!+\!\int
_{b-x}^{\infty} v_2(x\,{+}\,y) h_{\bar Y}(y)
\, \mathrm{d}y \Biggr),
\\[-2pt]
&\qquad\qquad\qquad\qquad\qquad\qquad\qquad\qquad\qquad\qquad\qquad \qquad\qquad
x\in[0,b], \end{split} %
\end{equation}
and
\begin{equation}
\label{eq:30} %
\begin{split} &d^3 v'''_2(x)
+(4\lambda+ 4\bar\lambda+3\delta) d^2 v''_2(x)
\\[-2pt]
&\quad+ \bigl( (\lambda+ 2\bar\lambda+\delta) (3\lambda+ 2\bar\lambda +2\delta)
+(2\lambda+ \bar\lambda+\delta) (\lambda+ \bar\lambda+\delta) \bigr) d
v'_2(x)
\\[-2pt]
&\quad+(\lambda+ 2\bar\lambda+\delta) (2\lambda+ \bar\lambda+\delta ) (\lambda+
\bar\lambda+\delta) v_2(x)
\\[-2pt]
&\quad =(\lambda\,{+}\, 2\bar\lambda\,{+}\,\delta) (2\lambda\,{+}\, \bar\lambda\,{+}\,\delta)
\beta_3(x) \,{+}\,(3\lambda\,{+}\, 3\bar\lambda\,{+}\,2\delta)d \beta'_3(x)
\,{+}\,d^2 \beta''_3(x)
\\[-2pt]
&\qquad-2(\lambda+ 2\bar\lambda+\delta) \beta_4(x) -2d
\beta'_4(x) +(\lambda+ 2\bar\lambda+\delta) (2\lambda+
\bar\lambda+\delta)d
\\[-2pt]
&\qquad+2\bar\lambda^2 \bar\theta(\bar\lambda-\lambda) \int
_0^{\infty} v_2(x+y) h_{\bar Y}(y)
\, \mathrm{d}y, \quad x\in[b,\infty), \end{split} %
\end{equation}
where
\begin{equation*}
\begin{split} \beta_3(x) &=\lambda\int
_{x-b}^x v_1(x-y) \bigl(
f_{Y}(y) +\theta h_{Y}(y) \bigr)\, \mathrm{d}y
\\[-2pt]
&\quad+\lambda\int_0^{x-b} v_2(x-y)
\bigl( f_{Y}(y) +\theta h_{Y}(y) \bigr)\, \mathrm{d}y
\\[-2pt]
&\quad+\bar\lambda\int_0^{\infty}
v_2(x+y) \bigl( f_{\bar Y}(y) +\bar \theta
h_{\bar Y}(y) \bigr)\, \mathrm{d}y, \quad x\in[b,\infty), \end{split}
\end{equation*}
and
\begin{align*}
\beta_4(x) &=\lambda^2 \theta
\Biggl( \int_{x-b}^x v_1(x-y)
h_{Y}(y)\, \mathrm{d}y +\int_0^{x-b}
v_2(x-y) h_{Y}(y)\, \mathrm{d}y \Biggr)
\\[-2pt]
&\quad+\bar\lambda^2 \bar\theta\int_x^{\infty}
v_2(u) h_{\bar
Y}(u-x)\, \mathrm{d}u, \quad x\in[b,\infty).
\end{align*}
\end{thm}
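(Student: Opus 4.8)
The plan is to run the same first-jump decomposition as in the proof of Theorem~\ref{thm:1}, adapted in two ways. First, there is no penalty term: once ruin occurs all dividend payments cease, so a ruinous claim contributes $0$ rather than $w$ to $v$. Hence every $w$-integral appearing in the passage from~\eqref{eq:11} to~\eqref{eq:9} and from~\eqref{eq:14} to~\eqref{eq:10} is simply deleted, which is exactly why $\beta_3$ and $\beta_4$ are $\beta_1$ and $\beta_2$ with their $w$-terms removed and $m_i$ replaced by $v_i$, and why only the first derivatives of $f_{Y}$ and $f_{\bar Y}$—and no second derivatives of $w$—are required in the hypotheses. Second, while the surplus is at or above $b$ dividends accrue at rate $d$, which contributes an additional source term present only for $x\ge b$.

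First I would treat $x\in[0,b]$. Since the surplus stays below $b$ until the first jump, no dividends are collected beforehand, so conditioning on the time and size of the first jump of $(X_t^b(x))_{t\ge0}$ gives the exact analogue of~\eqref{eq:11} with the ruin integral dropped. Substituting the conditional densities~\eqref{eq:4} and~\eqref{eq:5}, separating the double integrals into $t$- and $y$-parts as in~\eqref{eq:13}, and evaluating the elementary $t$-integrals (now with $\delta$ in place of $\delta_0$) yields~\eqref{eq:29} at once.

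The substantive part is $x\in[b,\infty)$. Here I would again split the first-jump time at $(x-b)/d$, the time the undisturbed surplus needs to descend from $x$ to $b$, so that the expected post-jump value has precisely the structure of the right-hand side of~\eqref{eq:14}--\eqref{eq:19} with $m\mapsto v$ and $w\equiv0$. The new ingredient is the expected discounted dividends accrued before the first jump, namely
\[
\mathbb E\Biggl[\int_0^{T_1\wedge\bar T_1\wedge(x-b)/d} d\,e^{-\delta s}\,\mathrm{d}s\Biggr]
=\frac{d}{\lambda+\bar\lambda+\delta}\Bigl(1-e^{-(\lambda+\bar\lambda+\delta)(x-b)/d}\Bigr),
\]
where the evaluation uses that $T_1\wedge\bar T_1$ is exponential with rate $\lambda+\bar\lambda$. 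Adding this term to the post-jump expectation gives the $v$-analogue of~\eqref{eq:19}, after which I would repeat verbatim the continuity-and-differentiability bootstrap of~\eqref{eq:20}--\eqref{eq:24}: successive differentiation, each step justified by the boundedness of the value functions and the $f'_{Y},f'_{\bar Y}$-smoothness hypotheses, eliminates the three exponentials $e^{-(\lambda+\bar\lambda+\delta)x/d}$, $e^{-(2\lambda+\bar\lambda+\delta)x/d}$ and $e^{-(\lambda+2\bar\lambda+\delta)x/d}$ and reorganizes the post-jump part into the third-order operator of~\eqref{eq:30} on the left and the $\beta_3$, $\beta_4$ and $h_{\bar Y}$-integral terms on the right.

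The one place requiring genuine care is the bookkeeping of the extra dividend term. Its exponential part $-\frac{d}{\lambda+\bar\lambda+\delta}e^{-(\lambda+\bar\lambda+\delta)(x-b)/d}$ carries exactly the exponent of the $I_1$-type term, so it cancels at the first multiply-and-add step (the analogue of passing from~\eqref{eq:19} and~\eqref{eq:20} to~\eqref{eq:21}) and never reappears; equivalently, it lies in the kernel of the factored operator $\bigl(d\tfrac{\mathrm{d}}{\mathrm{d}x}+(\lambda+\bar\lambda+\delta)\bigr)\bigl(d\tfrac{\mathrm{d}}{\mathrm{d}x}+(2\lambda+\bar\lambda+\delta)\bigr)\bigl(d\tfrac{\mathrm{d}}{\mathrm{d}x}+(\lambda+2\bar\lambda+\delta)\bigr)$. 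Its constant part $\frac{d}{\lambda+\bar\lambda+\delta}$ is annihilated by every derivative but is picked up by the zeroth-order coefficient $(\lambda+2\bar\lambda+\delta)(2\lambda+\bar\lambda+\delta)(\lambda+\bar\lambda+\delta)$ of that operator, yielding precisely the extra constant $(\lambda+2\bar\lambda+\delta)(2\lambda+\bar\lambda+\delta)d$ on the right-hand side of~\eqref{eq:30}. Verifying that this constant is the only net effect of the dividends, and that everything else reassembles exactly as in Theorem~\ref{thm:1}, is the single new computation beyond the Gerber--Shiu case.
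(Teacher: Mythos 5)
Your proposal is correct and follows essentially the same route as the paper's proof: the paper likewise treats $x\in[0,b]$ by deleting the penalty integrals from the Theorem~\ref{thm:1} argument, inserts the pre-jump dividend expectation (your compact formula $\frac{d}{\lambda+\bar\lambda+\delta}\bigl(1-e^{-(\lambda+\bar\lambda+\delta)(x-b)/d}\bigr)$ agrees exactly with the sum of the two integrals computed before~\eqref{eq:33}), and then runs the same differentiate-and-combine bootstrap~\eqref{eq:36}--\eqref{eq:41}. Your kernel/factored-operator bookkeeping of the dividend term — exponential part annihilated at the first step, constant part producing $(\lambda+2\bar\lambda+\delta)(2\lambda+\bar\lambda+\delta)d$ — is precisely what the paper's successive multiply-and-add steps~\eqref{eq:38}, \eqref{eq:40}, \eqref{eq:30} carry out.
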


\begin{proof}
The proof is similar to the proof of Theorem~\ref{thm:1}, so we omit
detailed considerations.

Let $x\in[0,b]$. Considering the time and the size of the first jump
of $ (X_t^b(x) )_{t\ge0}$
and applying the law of total probability, we obtain
\begin{equation}
\label{eq:31} %
\begin{split} v(x)&=\int_0^{\infty}
e^{-(\lambda+\bar\lambda)t} \Biggl( \lambda\int_0^x
e^{-\delta t} v(x-y) f_{Y|T}(y \,|\, t)\, \mathrm{d}y
\\
&\quad+\bar\lambda\int_0^{\infty} e^{-\delta t}
v(x+y) f_{\bar Y |
\bar T}(y \,|\, t)\, \mathrm{d}y \Biggr) \mathrm{d}t, \quad x
\in[0,b]. \end{split} %
\end{equation}

Comparing~\eqref{eq:31} with~\eqref{eq:11} and applying arguments
similar to those in the proof of Theorem~\ref{thm:1} yield~\eqref{eq:29}.\vadjust{\goodbreak}

Let now $x\in[b,\infty)$. By the law of total probability, we have
\begin{equation}
\label{eq:32} %
\begin{split} v(x)&=\int_0^{(x-b)/d}
e^{-(\lambda+\bar\lambda)t} \Biggl( (\lambda +\bar\lambda) \int_0^t
de^{-\delta s}\, \mathrm{d}s
\\
&\quad\qquad+\lambda\int_0^{x-dt}
e^{-\delta t} v(x-dt-y) f_{Y|T}(y \, |\, t)\, \mathrm{d}y
\\
&\quad\qquad+\bar\lambda\int_0^{\infty}
e^{-\delta t} v(x-dt+y) f_{\bar Y | \bar T}(y \,|\, t)\, \mathrm{d}y \Biggr)
\mathrm{d}t
\\
&\quad+\int_{(x-b)/d}^{\infty} e^{-(\lambda+\bar\lambda)t} \Biggl( (
\lambda+\bar\lambda) \int_0^{(x-b)/d}
de^{-\delta s}\, \mathrm{d}s
\\
&\quad\qquad+\lambda\int_0^b e^{-\delta t}
v(b-y) f_{Y|T}(y \,|\, t)\, \mathrm{d}y
\\
&\quad\qquad+\bar\lambda\int_0^{\infty}
e^{-\delta t} v(b+y) f_{\bar
Y | \bar T}(y \,|\, t)\, \mathrm{d}y \Biggr)
\mathrm{d}t, \quad x\in[b,\infty). \end{split} %
\end{equation}

Taking into account that
\begin{align*}
&\int_0^{(x-b)/d} (\lambda+
\bar\lambda) e^{-(\lambda+\bar\lambda)t} \int_0^t
de^{-\delta s}\, \mathrm{d}s\, \mathrm{d}t
\\
&\quad=\frac{(\lambda+\bar\lambda)d}{\delta}\, \int_0^{(x-b)/d}
e^{-(\lambda+\bar\lambda)t} \bigl(1-e^{-\delta t}\bigr)\, \mathrm{d}t
\\
&\quad=\frac{d}{\delta}\, \biggl( \frac{\delta}{\lambda+\bar\lambda
+\delta} -e^{-(\lambda+\bar\lambda)(x-b)/d} +
\frac{\lambda+\bar\lambda}{\lambda+\bar\lambda+\delta}\, e^{-(\lambda+\bar\lambda+\delta)(x-b)/d} \biggr)
\end{align*}
and
\begin{align*}
&\int_{(x-b)/d}^{\infty} (\lambda+
\bar\lambda) e^{-(\lambda+\bar
\lambda)t} \int_0^{(x-b)/d}
de^{-\delta s}\, \mathrm{d}s\, \mathrm{d}t
\\
&\quad=\frac{(\lambda+\bar\lambda)d}{\delta}\,\bigl(1-e^{-\delta
(x-b)/d}\bigr)\, \int
_{(x-b)/d}^{\infty} e^{-(\lambda+\bar\lambda)t}\, \mathrm{d}t
\\
&\quad=\frac{d}{\delta}\, \bigl( e^{-(\lambda+\bar\lambda)(x-b)/d} -e^{-(\lambda+\bar\lambda+\delta)(x-b)/d} \bigr),
\end{align*}
substituting~\eqref{eq:4} and~\eqref{eq:5} into~\eqref{eq:32} and
using~\eqref{eq:8} give
\begin{equation}
\label{eq:33} %
\begin{split} v_2(x) &=I_{7,8,9}(x)
+I_{10,11,12}(x)
\\
&\quad+\frac{d}{\lambda+\bar\lambda+\delta}\, \bigl( 1-e^{-(\lambda
+\bar\lambda+\delta)(x-b)/d} \bigr), \quad x\in[b,
\infty), \end{split} %
\end{equation}
where
\begin{align*}
I_{7,8,9}(x)&=\int_0^{(x-b)/d}
e^{-(\lambda+\bar\lambda+\delta)t}
\\
&\quad\times \Biggl( \lambda\int_0^{x-dt-b}
v_2(x-dt-y) \bigl( f_{Y}(y) +\theta h_{Y}(y)
\bigl(2e^{-\lambda t}-1\bigr) \bigr)\, \mathrm{d}y
\\
&\quad +\lambda\int_{x-dt-b}^{x-dt}
v_1(x-dt-y) \bigl( f_{Y}(y) +\theta
h_{Y}(y) \bigl(2e^{-\lambda t}-1\bigr) \bigr)\, \mathrm{d}y
\\
&\quad +\bar\lambda\int_{0}^{\infty}
v_2(x-dt+y) \bigl( f_{\bar
Y}(y) +\bar\theta
h_{\bar Y}(y) \bigl(2e^{-\bar\lambda t}-1\bigr) \bigr)\, \mathrm{d}y \Biggr)
\mathrm{d}t 
\end{align*}
and
\begin{equation*}
\begin{split} I_{10,11,12}(x)&=\int_{(x-b)/d}^{\infty}
e^{-(\lambda+\bar\lambda
+\delta)t}
\\
&\quad\times \Biggl( \lambda\int_0^{b}
v_1(b-y) \bigl( f_{Y}(y) +\theta h_{Y}(y)
\bigl(2e^{-\lambda t}-1\bigr) \bigr)\, \mathrm{d}y
\\
&\quad +\bar\lambda\int_{0}^{\infty}
v_2(b+y) \bigl( f_{\bar Y}(y) +\bar\theta h_{\bar Y}(y)
\bigl(2e^{-\bar\lambda t}-1\bigr) \bigr)\, \mathrm{d}y \Biggr) \mathrm{d}t.
\end{split} %
\end{equation*}

Changing the variable $x-dt=s$ in the outer integral of the expression
for $I_{7,8,9}(x)$ yields
\begin{equation}
\label{eq:34} %
\begin{split} I_{7,8,9}(x)&=\frac{1}{d}
\, e^{-(\lambda+\bar\lambda+\delta)x/d}\, I_7(x) +\frac{2}{d}\, e^{-(2\lambda+\bar\lambda+\delta)x/d}
\, I_8(x)
\\
&\quad+\frac{2}{d}\, e^{-(\lambda+2\bar\lambda+\delta)x/d}\, I_9(x), \quad x
\in[b,\infty), \end{split} %
\end{equation}
where
\begin{align*}
I_7(x)&=\int_b^x
e^{(\lambda+\bar\lambda+\delta)s/d} \Biggl( \lambda\int_0^{s-b}
v_2(s-y) \bigl( f_{Y}(y) -\theta h_{Y}(y)
\bigr)\, \mathrm{d}y
\\
&\quad+\lambda\int_{s-b}^{s} v_1(s-y)
\bigl( f_{Y}(y) -\theta h_{Y}(y) \bigr)\, \mathrm{d}y
\\
&\quad+\bar\lambda\int_{0}^{\infty}
v_2(s+y) \bigl( f_{\bar Y}(y) -\bar\theta h_{\bar Y}(y)
\bigr)\, \mathrm{d}y \Biggr) \mathrm{d}s,\\ 
I_8(x)&=\lambda\theta\int
_b^x e^{(2\lambda+\bar\lambda+\delta)s/d} \Biggl( \int
_0^{s-b} v_2(s-y) h_{Y}(y)
\, \mathrm{d}y
\\
&\quad+\int_{s-b}^{s} v_1(s-y)
h_{Y}(y)\, \mathrm{d}y \Biggr) \mathrm{d}s
\end{align*}
and
\begin{equation*}
\begin{split} I_9(x)&=\bar\lambda\bar\theta\int
_b^x e^{(\lambda+2\bar\lambda
+\delta)s/d} \Biggl( \int
_0^{\infty} v_2(s+y) h_{\bar Y}(y)
\, \mathrm{d}y \Biggr) \mathrm{d}s. \end{split} %
\end{equation*}

Separating the integrals in the expression for $I_{10,11,12}(x)$ into
integrals w.r.t. either $t$ or $y$
and then taking the integrals w.r.t. $t$ we obtain
\begin{equation}
\label{eq:35} I_{10,11,12}(x) =I_{10}(x) +I_{11}(x)
+I_{12}(x), \quad x\in[b,\infty),
\end{equation}
where
\begin{align*}
I_{10}(x)&=\frac{e^{-(\lambda+\bar\lambda+\delta)(x-b)/d}}{\lambda
+\bar\lambda+\delta}
\\
&\quad\times \Biggl( \lambda\int_0^b
v_1(b-y) f_{Y}(y)\, \mathrm{d}y -\lambda\theta\int
_0^b v_1(b-y)
h_{Y}(y)\, \mathrm{d}y
\\
&\quad +\bar\lambda\int_0^{\infty}
v_2(b+y) f_{\bar Y}(y)\, \mathrm{d}y -\bar\lambda\bar\theta
\int_0^{\infty} v_2(b+y)
h_{\bar Y}(y)\, \mathrm {d}y \Biggr),\\ 
I_{11}(x)&=\frac{2\lambda\theta e^{-(2\lambda+\bar\lambda+\delta
)(x-b)/d}}{2\lambda+\bar\lambda+\delta}\, \int_0^b
v_1(b-y) h_{Y}(y)\, \mathrm{d}y
\end{align*}
and
\[
I_{12}(x)=\frac{2 \bar\lambda\bar\theta e^{-(\lambda+2\bar\lambda
+\delta)(x-b)/d}}{\lambda+2\bar\lambda+\delta}\, \int_0^{\infty}
v_2(b+y) h_{\bar Y}(y)\, \mathrm{d}y.
\]

Thus, substituting~\eqref{eq:34} and~\eqref{eq:35} into~\eqref{eq:33}
we have
\begin{align}
v_2(x)&=\frac{1}{d}\,
e^{-(\lambda+\bar\lambda+\delta)x/d}\, I_7(x) +\frac{2}{d}\, e^{-(2\lambda+\bar\lambda+\delta)x/d}\,
I_8(x)\notag
\\
&\quad+\frac{2}{d}\, e^{-(\lambda+2\bar\lambda+\delta)x/d}\, I_9(x)
+I_{10}(x) +I_{11}(x) +I_{12}(x)\label{eq:36}
\\
&\quad+\frac{d}{\lambda+\bar\lambda+\delta}\, \bigl( 1-e^{-(\lambda
+\bar\lambda+\delta)(x-b)/d} \bigr), \quad x\in[b,
\infty).\notag 
\end{align}

It is easily seen from~\eqref{eq:29} that $v_1(x)$ is continuous on
$[0,b]$, and from~\eqref{eq:36} we conclude that $v_2(x)$
is continuous on $[b,\infty)$. Hence, from~\eqref{eq:36} we deduce that
$v_2(x)$ is differentiable on $[b,\infty)$.
Differentiating~\eqref{eq:36} gives
\begin{equation}
\label{eq:37} %
\begin{split} v'_2(x)&=-
\frac{\lambda+\bar\lambda+\delta}{d^2}\, e^{-(\lambda+\bar
\lambda+\delta)x/d}\, I_7(x)
\\
&\quad-\frac{2(2\lambda+\bar\lambda+\delta)}{d^2}\, e^{-(2\lambda
+\bar\lambda+\delta)x/d}\, I_8(x)
\\
&\quad-\frac{2(\lambda+2\bar\lambda+\delta)}{d^2}\, e^{-(\lambda
+2\bar\lambda+\delta)x/d}\, I_9(x)
\\
&\quad-\frac{\lambda+\bar\lambda+\delta}{d}\, I_{10}(x) -\frac
{2\lambda+\bar\lambda+\delta}{d}\,
I_{11}(x) -\frac{\lambda+2\bar\lambda+\delta}{d}\, I_{12}(x)
\\
&\quad+\frac{1}{d}\, \beta_3(x) +e^{-(\lambda+\bar\lambda+\delta
)(x-b)/d}, \quad x
\in[b,\infty), \end{split} %
\end{equation}
where the function $\beta_3(x)$ is defined in the assertion of the theorem.

Multiplying~\eqref{eq:36} by $(\lambda+\bar\lambda+\delta)/d$ and
adding~\eqref{eq:37} we get
\begin{equation}
\label{eq:38} %
\begin{split}\!\!\!\!dv'_2(x) +(
\lambda+\bar\lambda+\delta) v_2(x)& =-\frac{2\lambda}{d}\,
e^{-(2\lambda+\bar\lambda+\delta)x/d}\, I_8(x)
\\
&\quad-\frac{2 \bar\lambda}{d}\, e^{-(\lambda+2\bar\lambda+\delta
)x/d}\, I_9(x) -\lambda
I_{11}(x) -\bar\lambda I_{12}(x)
\\
&\quad+\beta_3(x) +d, \quad x\in[b,\infty). \end{split} %
\end{equation}

Since $v_1(x)$ and $v_2(x)$ are continuous and bounded on $[0,b]$ and
$[b,\infty)$, respectively, from~\eqref{eq:38}
we conclude that so is $v'_2(x)$ on $[b,\infty)$.
Taking into account that $f'_{Y}(y)$ and $f'_{\bar Y}(y)$ are
continuous and bounded on $\mathbb R_{+}$, from~\eqref{eq:29}
we conclude that so is $v'_1(x)$ on $[0,b]$.
Hence, $\beta_3(x)$ is differentiable on $[b,\infty)$. From this
and~\eqref{eq:38} it follows that $v_2(x)$ is
twice differentiable on $[b,\infty)$. Differentiating~\eqref{eq:38} gives
\begin{equation}
\label{eq:39} %
\begin{split} &dv''_2(x)
+(\lambda+\bar\lambda+\delta) v'_2(x) =
\frac{2\lambda
(2\lambda+\bar\lambda+\delta)}{d^2}\, e^{-(2\lambda+\bar\lambda+\delta)x/d}\, I_8(x)
\\
&\quad+\frac{2 \bar\lambda(\lambda+2\bar\lambda+\delta)}{d^2}\, e^{-(\lambda+2\bar\lambda+\delta)x/d}\, I_9(x) +
\frac{\lambda(2\lambda+\bar\lambda+\delta)}{d}\, I_{11}(x)
\\
&\quad+\frac{\bar\lambda(\lambda+2\bar\lambda+\delta)}{d}\, I_{12}(x) +\beta'_3(x)
-\frac{2}{d}\, \beta_4(x), \quad x\in[b,\infty), \end{split}
\end{equation}
where
\begin{align*}
\beta'_3(x) &=\lambda\int
_{x-b}^x v_1'(x-y) \bigl(
f_{Y}(y) +\theta h_{Y}(y) \bigr)\, \mathrm{d}y
\\
&\quad+\lambda\int_0^{x-b} v_2'(x-y)
\bigl( f_{Y}(y) +\theta h_{Y}(y) \bigr)\, \mathrm{d}y
\\
&\quad+\bar\lambda\int_0^{\infty}
v_2'(x+y) \bigl( f_{\bar Y}(y) +\bar \theta
h_{\bar Y}(y) \bigr)\, \mathrm{d}y
\\
&\quad+\lambda v_1(0) \bigl( f_{Y}(x) +\theta
h_{Y}(x) \bigr), \quad x\in[b,\infty), 
\end{align*}
which is continuous and bounded on $[b,\infty)$, and the function $\beta
_4(x)$ is defined in the assertion of the theorem.

Multiplying~\eqref{eq:38} by $(2\lambda+\bar\lambda+\delta)/d$ and
adding~\eqref{eq:39} we obtain
\begin{equation}
\label{eq:40} %
\begin{split} &d^2 v''_2(x)
+(3\lambda+2\bar\lambda+2\delta)d v'_2(x) +(2\lambda +
\bar\lambda+\delta) (\lambda+\bar\lambda+\delta) v_2(x)
\\
&\quad=\frac{2 \bar\lambda(\bar\lambda-\lambda)}{d}\, e^{-(\lambda
+2\bar\lambda+\delta)x/d}\, I_9(x) +\bar
\lambda(\bar\lambda-\lambda) I_{12}(x)
\\
&\qquad+(2\lambda+\bar\lambda+\delta) \beta_3(x) +d
\beta'_3(x) -2\beta_4(x)
\\
&\qquad+(2\lambda+\bar\lambda+\delta)d, \quad x\in[b,\infty). \end{split}
\end{equation}

It is easily seen from~\eqref{eq:40} that $v''_2(x)$ is continuous and
bounded on $[b,\infty)$.
Taking into account that $f'_{Y}(y)$ and $f'_{\bar Y}(y)$ are
continuous and bounded on $\mathbb R_{+}$,
from~\eqref{eq:29} we conclude that so is $v''_1(x)$ on $[0,b]$.
Hence, $\beta_3(x)$ is twice differentiable on $[b,\infty)$.
Moreover, applying similar arguments shows that $\beta_4(x)$ is
differentiable on $[b,\infty)$.
From this and~\eqref{eq:40} it follows that $v_2(x)$ has the third
derivative on $[b,\infty)$.
Differentiating~\eqref{eq:40} gives
\begin{equation}
\label{eq:41} %
\begin{split} &d^2 v'''_2(x)
+(3\lambda+2\bar\lambda+2\delta)d v''_2(x)
+(2\lambda +\bar\lambda+\delta) (\lambda+\bar\lambda+\delta) v'_2(x)
\\
&\quad=-\frac{2 \bar\lambda(\bar\lambda-\lambda) (\lambda+2\bar
\lambda+\delta)}{d^2}\, e^{-(\lambda+2\bar\lambda+\delta)x/d}\, I_9(x)
\\
&\qquad-\frac{\bar\lambda(\bar\lambda-\lambda) (\lambda+2\bar
\lambda+\delta)}{d}\, I_{12}(x) +(2\lambda+\bar\lambda+\delta)
\beta'_3(x) +d\beta''_3(x)
\\
&\qquad-2\beta'_4(x) +\frac{2 \bar\lambda^2 \bar\theta(\bar\lambda
-\lambda)}{d}\, \int
_0^{\infty} v_2(x+y) h_{\bar Y}(y)
\, \mathrm{d}y, \quad x\in[b,\infty). \end{split} %
\end{equation}

Multiplying~\eqref{eq:40} by $(\lambda+2\bar\lambda+\delta_0)/d$ and
adding~\eqref{eq:41} yield~\eqref{eq:30},
which completes the proof.
\end{proof}

\begin{remark}
\label{rem:4}
To solve equations~\eqref{eq:29} and~\eqref{eq:30}, we use the
following boundary conditions. First of all, we have $v_1(b)=v_2(b)$.
Next, if the net profit condition holds, applying arguments similar to
those in \cite[p.~70]{Sc2008} we can show that
$\lim_{x\to\infty} v_2(x) =d/{\delta}$.
Moreover, we can substitute $x=b$ into the intermediate equations
(e.g., equation~\eqref{eq:38}) to get additional boundary conditions
involving derivatives of $v_2(x)$. The uniqueness of the required
solutions should also be justified.
If $\theta=0$ and $\bar\theta=0$, we can find explicit solutions to the
equations (see Section~\ref{sec:5}).
\end{remark}

\begin{remark}
\label{rem:5}
If at least one of the parameters $\theta$ and $\bar\theta$ is equal to
0, we do not need to differentiate~\eqref{eq:36}
three times and can obtain equations not involving the third derivative
of $v_2(x)$ instead of~\eqref{eq:30}.

Thus, if $\theta=0$ and $\bar\theta=0$, from~\eqref{eq:38} we have
\begin{equation}
\label{eq:42} dv'_2(x) +(\lambda+\bar\lambda+
\delta_0) v_2(x) =\beta_3(x)+d, \quad x
\in[b,\infty).
\end{equation}

If $\theta\neq0$ and $\bar\theta=0$, from~\eqref{eq:40} we get
\begin{equation}
\label{eq:43} %
\begin{split} &d^2 v''_2(x)
+(3\lambda+2\bar\lambda+2\delta_0)d v'_2(x)
+(2\lambda +\bar\lambda+\delta) (\lambda+\bar\lambda+\delta) v_2(x)
\\
&\quad=(2\lambda\,{+}\,\bar\lambda\,{+}\,\delta) \beta_3(x) \,{+}\,d
\beta'_3(x) \,{-}\,2\beta _4(x) \,{+}\,(2\lambda\,{+}\,\bar
\lambda\,{+}\,\delta)d, \quad x\in[b,\infty). \end{split} %
\end{equation}

If $\theta=0$ and $\bar\theta\neq0$, multiplying~\eqref{eq:38} by
$(\lambda+2\bar\lambda+\delta_0)/d$ and adding~\eqref{eq:39}
we have
\begin{equation}
\label{eq:44} %
\begin{split} &d^2 v''_2(x)
+(2\lambda+3\bar\lambda+2\delta)d v'_2(x) +(\lambda +2
\bar\lambda+\delta) (\lambda+\bar\lambda+\delta) v_2(x)
\\
&\quad=(\lambda\,{+}\,2\bar\lambda\,{+}\,\delta) \beta_3(x) \,{+}\,d
\beta'_3(x) \,{-}\,2\beta _4(x) \,{+}\,(\lambda\,{+}\,2\bar
\lambda\,{+}\,\delta)d, \quad x\in[b,\infty). \end{split} %
\end{equation}
To obtain~\eqref{eq:42}--\eqref{eq:44}, it is enough to have weaker
smoothness assumptions on $f_{Y}(y)$ and $f_{\bar Y}(y)$.

Equation~\eqref{eq:29} is true in all possible cases.
Since~\eqref{eq:29} involves no derivatives, it holds under weaker
assumptions than~\eqref{eq:30}.
To obtain~\eqref{eq:29}, we do not need the differentiability of
$f_{Y}(y)$ and $f_{\bar Y}(y)$.
\end{remark}

\section{Exponentially distributed claim and premium sizes}
\label{sec:5}

In this section, we deal with exponentially distributed claim and
premium sizes, i.e.
\begin{equation}
\label{eq:45} f_{Y}(y) =\frac{1}{\mu}\, e^{-y/\mu},
\qquad h_{Y}(y) =\frac{2}{\mu}\, e^{-2y/\mu} -
\frac{1}{\mu}\, e^{-y/\mu}, \quad y\ge0,
\end{equation}
and
\begin{equation}
\label{eq:46} f_{\bar Y}(y) =\frac{1}{\bar\mu}\, e^{-y/\bar\mu},
\qquad h_{\bar Y}(y) =\frac{2}{\bar\mu}\, e^{-2y/\bar\mu} -
\frac{1}{\bar\mu}\, e^{-y/\bar\mu}, \quad y\ge0.
\end{equation}

\subsection{The ruin probability in the model without dividend payments}
\label{sec:5.1}

If no dividends are paid, equation~\eqref{eq:25} for the ruin
probability $\psi(x)$ takes the form
\begin{equation}
\label{eq:47} %
\begin{split}(\lambda+ \bar\lambda)\psi(x) &=\lambda
\Biggl( \int_0^x \psi(u) f_{Y}(x-u)
\, \mathrm{d}u +\int_0^{\infty} f_{Y}(x+u)
\, \mathrm{d}u \Biggr)
\\
&\quad+\frac{\lambda\bar\lambda\theta}{2\lambda+ \bar\lambda} \Biggl( \int_0^x
\psi(u) h_{Y}(x-u)\, \mathrm{d}u +\int_0^{\infty}
h_{Y}(x+u)\, \mathrm{d}u \Biggr)
\\
&\quad+\bar\lambda\int_x^{\infty} \psi(u)
f_{\bar Y}(u-x)\, \mathrm{d}u \\
&\quad+\frac{\lambda\bar\lambda\bar\theta}{\lambda+ 2\bar\lambda}\, \int
_x^{\infty} \psi(u) h_{\bar Y}(u-x)\,
\mathrm{d}u, \quad x\in[0,\infty). \end{split} %
\end{equation}

Substituting~\eqref{eq:45} and~\eqref{eq:46} into~\eqref{eq:47} gives
\begin{equation}
\label{eq:48} %
\begin{split} (\lambda+ \bar\lambda)\psi(x)&= \biggl(
\lambda- \frac{\lambda\bar
\lambda\theta}{2\lambda+ \bar\lambda} \biggr) I_{13}(x) +\frac{\lambda\bar\lambda\theta}{2\lambda+ \bar\lambda}\,
I_{14}(x)
\\
&\quad+ \biggl(\bar\lambda- \frac{\lambda\bar\lambda\bar\theta
}{\lambda+ 2\bar\lambda} \biggr) I_{15}(x) +
\frac{\lambda\bar\lambda\bar\theta}{\lambda+ 2\bar\lambda}\, I_{16}(x)
\\
&\quad+ \biggl(\lambda- \frac{\lambda\bar\lambda\theta}{2\lambda+
\bar\lambda} \biggr) e^{-x/\mu} +
\frac{\lambda\bar\lambda\theta}{2\lambda+ \bar\lambda}\, e^{-2x/\mu
}, \quad x\in[0,\infty), \end{split}
\end{equation}
where
\begin{align*}
I_{13}(x)&= \frac{1}{\mu}\, e^{-x/\mu} \int
_0^x \psi(u) e^{u/\mu}\, \mathrm{d}u,
\qquad I_{14}(x)= \frac{2}{\mu}\, e^{-2x/\mu} \int
_0^x \psi(u) e^{2u/\mu}\, \mathrm{d}u,\\
I_{15}(x)&= \frac{1}{\bar\mu}\, e^{x/\bar\mu} \int
_x^{\infty} \psi(u) e^{-u/\bar\mu}\, \mathrm{d}u,
\qquad I_{16}(x)= \frac{2}{\bar\mu}\, e^{2x/\bar\mu} \int
_x^{\infty} \psi(u) e^{-2u/\bar\mu}\, \mathrm{d}u.
\end{align*}

We now show that if either $\theta=0$ or $\bar\theta=0$,
integro-differential equation~\eqref{eq:48} can be reduced
to a third-order linear differential equation with constant coefficients.

\begin{lemma}
\label{lem:1}
Let the surplus process $ (X_t(x) )_{t\ge0}$ follow~\eqref
{eq:1} under the above assumptions,
and let claim and premium sizes be exponentially distributed with means
$\mu$ and $\bar\mu$, respectively.

If $\theta\neq0$ and $\bar\theta=0$, then $\psi(x)$ is a solution to
the differential equation
\begin{equation}
\label{eq:49} %
\begin{split} &\mu^2 \bar\mu(\lambda+\bar
\lambda) (2\lambda+\bar\lambda) \psi '''(x)
\\
&\quad+ \bigl( \mu\bar\mu(2\lambda+3\bar\lambda) (2\lambda+\bar \lambda) -
\lambda\mu^2 (2\lambda+\bar\lambda) +\lambda\bar\lambda\mu\bar\mu
\theta \bigr) \psi''(x)
\\
&\quad+ \bigl( 2(\bar\lambda\bar\mu-\lambda\mu) (2\lambda+\bar\lambda ) +\lambda
\bar\lambda\mu\theta \bigr) \psi'(x) =0, \quad x\in[0,\infty).
\end{split} %
\end{equation}

If $\theta=0$ and $\bar\theta\neq0$, then $\psi(x)$ is a solution to
the differential equation
\begin{equation}
\label{eq:50} %
\begin{split} &\mu\bar\mu^2 (\lambda+\bar
\lambda) (\lambda+2\bar\lambda) \psi '''(x)
\\
&\quad+ \bigl( -\mu\bar\mu(3\lambda+2\bar\lambda) (\lambda+2\bar \lambda) +\bar
\lambda\bar\mu^2 (\lambda+2\bar\lambda) +\lambda\bar\lambda\mu\bar\mu
\bar\theta \bigr) \psi''(x)
\\
&\quad+ \bigl( 2(\lambda\mu-\bar\lambda\bar\mu) (\lambda+2\bar\lambda ) +\lambda
\bar\lambda\bar\mu\bar\theta \bigr) \psi'(x) =0, \quad x\in[0,
\infty). \end{split} %
\end{equation}
\end{lemma}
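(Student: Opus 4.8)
The plan is to convert the integro-differential equation \eqref{eq:48} into a pure linear differential equation by exploiting the fact that every integral $I_{13},\dots,I_{16}$ has an exponential kernel and therefore satisfies a first-order linear ODE. Writing $D=\mathrm{d}/\mathrm{d}x$ and differentiating each definition (applying the Leibniz rule to the variable integration limit), I obtain the relations $(\mu D+1)I_{13}=\psi$, $(\frac{\mu}{2}D+1)I_{14}=\psi$, $(-\bar\mu D+1)I_{15}=\psi$ and $(-\frac{\bar\mu}{2}D+1)I_{16}=\psi$. Each convolution integral converges because $\psi$ is a probability and hence bounded by $1$. I would also record that the two forcing exponentials are annihilated by precisely these operators, namely $(\mu D+1)e^{-x/\mu}=0$ and $(\frac{\mu}{2}D+1)e^{-2x/\mu}=0$.

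The core idea is to apply to both sides of \eqref{eq:48} the product of the first-order operators attached to the integral terms that actually occur. In the case $\theta\neq0$, $\bar\theta=0$ the term $I_{16}$ disappears, and I would apply $P(D)=(\mu D+1)(\frac{\mu}{2}D+1)(-\bar\mu D+1)$. Since constant-coefficient operators commute, $P(D)$ acting on $I_{13}$ equals $(\frac{\mu}{2}D+1)(-\bar\mu D+1)\psi$, on $I_{14}$ equals $(\mu D+1)(-\bar\mu D+1)\psi$, and on $I_{15}$ equals $(\mu D+1)(\frac{\mu}{2}D+1)\psi$; the two forcing exponentials vanish because $P(D)$ contains both $(\mu D+1)$ and $(\frac{\mu}{2}D+1)$. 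Thus the left-hand side $(\lambda+\bar\lambda)P(D)\psi$ becomes a linear combination of three second-order operators applied to $\psi$, i.e. a homogeneous third-order linear ODE with constant coefficients. The symmetric case $\theta=0$, $\bar\theta\neq0$ drops $I_{14}$ and leaves the single forcing term $\lambda e^{-x/\mu}$; applying $(\mu D+1)(-\bar\mu D+1)(-\frac{\bar\mu}{2}D+1)$ again collapses the three surviving integrals, and the factor $(\mu D+1)$ annihilates the remaining exponential.

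Two checks then finish the argument. First, the coefficient of the undifferentiated $\psi$ equals $(\lambda+\bar\lambda)$ minus the sum of the coefficients of the three integral terms present in \eqref{eq:48}; in either case this sum is $\lambda+\bar\lambda$ (the $\theta$- and $\bar\theta$-dependent pieces cancel against the $I_{14}$, respectively $I_{16}$, coefficient), so the zeroth-order term drops out and only $\psi',\psi'',\psi'''$ survive, exactly as in \eqref{eq:49} and \eqref{eq:50}. Second, expanding $P(D)$ and matching against the stated equations requires multiplying through by the common denominator $2\lambda+\bar\lambda$ (respectively $\lambda+2\bar\lambda$) and rescaling by a numerical constant; a glance at the top-order term, which is $-(\lambda+\bar\lambda)\mu^2\bar\mu/2$, confirms that after multiplying by $-2(2\lambda+\bar\lambda)$ one recovers the coefficient $\mu^2\bar\mu(\lambda+\bar\lambda)(2\lambda+\bar\lambda)$ of $\psi'''$ in \eqref{eq:49}. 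The smoothness needed to differentiate three times is obtained by bootstrapping: the first-order relations above express each derivative of $\psi$ as a finite linear combination of $\psi$ and the continuous integrals $I_k$, so continuity of $\psi$ propagates to as many derivatives as required. The only genuine labour is the coefficient bookkeeping in the final expansion of $P(D)$, which is routine but must be carried out carefully to reproduce the exact constants in \eqref{eq:49} and \eqref{eq:50}.
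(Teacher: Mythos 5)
Your proposal is correct and is essentially the paper's own proof: the paper carries out exactly the same elimination, applying the first-order operators $(\mu D+1)$, $(-\bar\mu D+1)$, and then $2\bigl(\frac{\mu}{2}D+1\bigr)$ (respectively $2\bigl(-\frac{\bar\mu}{2}D+1\bigr)$) one at a time as successive differentiate--multiply--add steps in \eqref{eq:51}--\eqref{eq:56} (respectively \eqref{eq:57}--\eqref{eq:58}), which is precisely the expanded form of your single commuting product $P(D)$ applied to \eqref{eq:48}. The key ingredients---the first-order relations satisfied by $I_{13},\dots,I_{16}$, the annihilation of the exponential forcing terms, the cancellation of the zeroth-order term in $\psi$, and the bootstrapping of smoothness---are identical in both arguments.
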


\begin{proof}
First of all, note that
\begin{align*}
I'_{13}(x)&= -\frac{1}{\mu}\, I_{13}(x) +
\frac{1}{\mu}\, \psi(x), \qquad I'_{14}(x)= -
\frac{2}{\mu}\, I_{14}(x) +\frac{2}{\mu}\, \psi(x),\\
I'_{15}(x)&= \frac{1}{\bar\mu}\, I_{15}(x) -
\frac{1}{\bar\mu}\, \psi(x), \qquad I'_{16}(x)=
\frac{2}{\bar\mu}\, I_{16}(x) -\frac{2}{\bar\mu}\, \psi(x).
\end{align*}

From~\eqref{eq:48}, it is easily seen that $\psi(x)$ is differentiable
on $[0,\infty)$.
Therefore, differentiating~\eqref{eq:48} yields
\begin{equation}
\label{eq:51} %
\begin{split}&(\lambda+ \bar\lambda)
\psi'(x)\\
&\quad= -\frac{1}{\mu}\, \biggl(\lambda- \frac
{\lambda\bar\lambda\theta}{2\lambda+ \bar\lambda}
\biggr) I_{13}(x) -\frac{2}{\mu}\, \frac{\lambda\bar\lambda\theta}{2\lambda+ \bar\lambda
}\,
I_{14}(x)
\\
&\qquad+\frac{1}{\bar\mu}\, \biggl(\bar\lambda- \frac{\lambda\bar
\lambda\bar\theta}{\lambda+ 2\bar\lambda} \biggr)
I_{15}(x) +\frac{2}{\bar\mu}\, \frac{\lambda\bar\lambda\bar\theta}{\lambda+
2\bar\lambda}\,
I_{16}(x) + \biggl( \frac{1}{\mu}\, \biggl(\lambda-
\frac{\lambda\bar\lambda
\theta}{2\lambda+ \bar\lambda} \biggr)
\\
&\quad\qquad+\frac{2}{\mu}\, \frac{\lambda\bar\lambda\theta}{2\lambda+
\bar\lambda} -\frac{1}{\bar\mu}\, \biggl(
\bar\lambda- \frac{\lambda\bar\lambda\bar
\theta}{\lambda+ 2\bar\lambda} \biggr) -\frac{2}{\bar\mu}\, \frac{\lambda\bar\lambda\bar\theta}{\lambda+
2\bar\lambda}
\biggr) \psi(x)
\\
&\qquad-\frac{1}{\mu}\, \biggl(\lambda- \frac{\lambda\bar\lambda
\theta}{2\lambda+ \bar\lambda} \biggr)
e^{-x/\mu} -\frac{2}{\mu}\, \frac{\lambda\bar\lambda\theta}{2\lambda+ \bar\lambda
}\, e^{-2x/\mu},
\quad x\in[0,\infty). \end{split} %
\end{equation}

Multiplying~\eqref{eq:51} by $\mu$ and adding~\eqref{eq:48} we obtain
\begin{equation}
\label{eq:52} %
\begin{split} &\mu(\lambda+ \bar\lambda)
\psi'(x) +\bar\lambda\psi(x)\\
&\quad= -\frac{\lambda
\bar\lambda\theta}{2\lambda\,{+}\, \bar\lambda}\,
I_{14}(x)
\,{+}\, \biggl(1\,{+}\,\frac{\mu}{\bar\mu} \biggr) \biggl(\bar\lambda\,{-}\,
\frac
{\lambda\bar\lambda\bar\theta}{\lambda\,{+}\, 2\bar\lambda} \biggr) I_{15}(x) \,{+}\, \biggl(1\,{+}\,\frac{2\mu}{\bar\mu}
\biggr) \frac{\lambda\bar\lambda\bar
\theta}{\lambda\,{+}\, 2\bar\lambda}\, I_{16}(x)
\\
&\qquad+ \biggl( \frac{\lambda\bar\lambda\theta}{2\lambda+ \bar\lambda} -\frac{\mu}{\bar\mu}\, \biggl(\bar\lambda+
\frac{\lambda\bar\lambda
\bar\theta}{\lambda+ 2\bar\lambda} \biggr) \biggr) \psi(x) -\frac{\lambda\bar\lambda\theta}{2\lambda+ \bar\lambda}\, e^{-2x/\mu
},
\quad x\in[0,\infty). \end{split} %
\end{equation}

From~\eqref{eq:52}, it follows that $\psi(x)$ is twice differentiable
on $[0,\infty)$.
Differentiating~\eqref{eq:52} gives
\begin{align}
&\mu(\lambda+ \bar\lambda)
\psi''(x) +\bar\lambda\psi'(x)\notag\\
&\quad=
\frac{2}{\mu
}\, \frac{\lambda\bar\lambda\theta}{2\lambda+ \bar\lambda}\, I_{14}(x)
+\frac{1}{\bar\mu}\, \biggl(1+\frac{\mu}{\bar\mu} \biggr) \biggl(\bar
\lambda- \frac{\lambda\bar\lambda\bar\theta}{\lambda+
2\bar\lambda} \biggr) I_{15}(x)\notag\\
&\qquad +\frac{2}{\bar\mu}\,
\biggl(1+\frac{2\mu}{\bar\mu} \biggr) \frac{\lambda
\bar\lambda\bar\theta}{\lambda+ 2\bar\lambda}\, I_{16}(x)
+ \biggl( \frac{\lambda\bar\lambda\theta}{2\lambda+ \bar\lambda} -\frac{\mu}{\bar\mu}\, \biggl(\bar\lambda+
\frac{\lambda\bar\lambda
\bar\theta}{\lambda+ 2\bar\lambda} \biggr) \biggr) \psi'(x)\notag\\
&\qquad - \biggl(
\frac{2}{\mu}\, \frac{\lambda\bar\lambda\theta}{2\lambda+
\bar\lambda}
+\frac{1}{\bar\mu}\, \biggl(1+\frac{\mu}{\bar\mu} \biggr) \biggl(\bar
\lambda- \frac{\lambda\bar\lambda\bar\theta}{\lambda+
2\bar\lambda} \biggr) +\frac{2}{\bar\mu}\, \biggl(1+
\frac{2\mu}{\bar\mu} \biggr) \frac{\lambda
\bar\lambda\bar\theta}{\lambda+ 2\bar\lambda} \biggr) \psi(x)\notag
\\
&\qquad+ \frac{2}{\mu}\, \frac{\lambda\bar\lambda\theta}{2\lambda+
\bar\lambda}\, e^{-2x/\mu}, \quad x
\in[0,\infty).\label{eq:53} 
\end{align}

Multiplying~\eqref{eq:53} by $(-\bar\mu)$ and adding~\eqref{eq:52} we get
\begin{equation}
\label{eq:54} %
\begin{split} &-\mu\bar\mu(\lambda+ \bar\lambda)
\psi''(x) +(\lambda\mu-\bar\lambda\bar \mu)
\psi'(x)\\
&\quad =- \biggl(1+\frac{2\bar\mu}{\mu} \biggr) \frac{\lambda\bar\lambda\theta
}{2\lambda+ \bar\lambda}\,
I_{14}(x)
- \biggl(1+\frac{2\mu}{\bar\mu} \biggr) \frac{\lambda\bar\lambda
\bar\theta}{\lambda+ 2\bar\lambda}\,
I_{16}(x) \\
&\qquad+ \biggl( -\frac{\lambda\bar\lambda\bar\mu\theta}{2\lambda+ \bar
\lambda} +\frac{\lambda\bar\lambda\mu\bar\theta}{\lambda+ 2\bar\lambda} \biggr)
\psi'(x)
+ \biggl( \biggl(1+\frac{2\bar\mu}{\mu} \biggr) \frac{\lambda\bar
\lambda\theta}{2\lambda+ \bar\lambda}\\
&\qquad +
\biggl(1+\frac{2\mu}{\bar\mu} \biggr) \frac{\lambda\bar\lambda\bar
\theta}{\lambda+ 2\bar\lambda} \biggr) \psi(x)
- \biggl(1+\frac{2\bar\mu}{\mu} \biggr) \frac{\lambda\bar\lambda
\theta}{2\lambda+ \bar\lambda}\,
e^{-2x/\mu}, \quad x\in[0,\infty). \end{split} %
\end{equation}

Let now $\theta\neq0$ and $\bar\theta=0$. Then~\eqref{eq:54} takes the form
\begin{align}
&-\mu\bar\mu(\lambda+ \bar\lambda)
\psi''(x) +(\lambda\mu-\bar\lambda\bar \mu)
\psi'(x)\notag\\
&\quad =- \biggl(1+\frac{2\bar\mu}{\mu} \biggr) \frac{\lambda\bar\lambda\theta
}{2\lambda+ \bar\lambda}\,
I_{14}(x)
-\frac{\lambda\bar\lambda\bar\mu\theta}{2\lambda+ \bar\lambda
}\, \psi'(x) + \biggl(1+\frac{2\bar\mu}{\mu}
\biggr) \frac{\lambda\bar\lambda\theta
}{2\lambda+ \bar\lambda}\, \psi(x)\notag
\\
&\qquad- \biggl(1+\frac{2\bar\mu}{\mu} \biggr) \frac{\lambda\bar\lambda
\theta}{2\lambda+ \bar\lambda}\,
e^{-2x/\mu}, \quad x\in[0,\infty).\label{eq:55} 
\end{align}

From~\eqref{eq:55}, it follows that $\psi(x)$ has the third derivative
on $[0,\infty)$.
Differentiating~\eqref{eq:55} yields
\begin{equation}
\label{eq:56} %
\begin{split} &-\mu\bar\mu(\lambda+ \bar\lambda)
\psi'''(x) +(\lambda\mu-\bar\lambda \bar
\mu)\psi''(x) \\
&\quad=\frac{2}{\mu}\, \biggl(1+
\frac{2\bar\mu}{\mu} \biggr) \frac{\lambda\bar
\lambda\theta}{2\lambda+ \bar\lambda}\, I_{14}(x)
-\frac{\lambda\bar\lambda\bar\mu\theta}{2\lambda+ \bar\lambda
}\, \psi''(x) + \biggl(1+
\frac{2\bar\mu}{\mu} \biggr) \frac{\lambda\bar\lambda\theta
}{2\lambda+ \bar\lambda}\, \psi'(x)
\\
&\qquad-\frac{2}{\mu}\, \biggl(1+\frac{2\bar\mu}{\mu} \biggr)
\frac
{\lambda\bar\lambda\theta}{2\lambda+ \bar\lambda}\, \psi(x) +\frac{2}{\mu}\, \biggl(1+\frac{2\bar\mu}{\mu}
\biggr) \frac{\lambda\bar
\lambda\theta}{2\lambda+ \bar\lambda}\, e^{-2x/\mu}, \quad x\in[0,\infty). \end{split}
\end{equation}

Multiplying~\eqref{eq:56} by $\mu$ and adding~\eqref{eq:55} multiplied
by 2 we obtain
\begin{equation*}
\begin{split} &-\mu^2\bar\mu(\lambda+ \bar\lambda)
\psi'''(x) + \bigl( \mu(\lambda\mu -\bar
\lambda\bar\mu) -2\mu\bar\mu(\lambda+ \bar\lambda) \bigr)\psi''(x)
+2(\lambda\mu-\bar\lambda\bar\mu)\psi'(x)
\\
&\quad=\frac{\lambda\bar\lambda\mu\bar\mu\theta}{2\lambda+ \bar
\lambda}\, \psi''(x) +
\frac{\lambda\bar\lambda\mu\theta}{2\lambda+ \bar\lambda}\, \psi '(x), \quad x\in[0,\infty), \end{split}
\end{equation*}
from which~\eqref{eq:49} follows.

If $\theta=0$ and $\bar\theta\neq0$, then~\eqref{eq:54} takes the form
\begin{equation}
\label{eq:57} %
\begin{split} &-\mu\bar\mu(\lambda+ \bar\lambda)
\psi''(x) +(\lambda\mu-\bar\lambda\bar \mu)
\psi'(x) \\
&\quad=- \biggl(1+\frac{2\mu}{\bar\mu} \biggr) \frac{\lambda\bar\lambda\bar
\theta}{\lambda+ 2\bar\lambda}\,
I_{16}(x)
+\frac{\lambda\bar\lambda\mu\bar\theta}{\lambda+ 2\bar\lambda
} \psi'(x) \\
&\qquad+ \biggl(1+\frac{2\mu}{\bar\mu}
\biggr) \frac{\lambda\bar\lambda\bar
\theta}{\lambda+ 2\bar\lambda}\, \psi(x), \quad x\in[0,\infty). \end{split}
\end{equation}

From~\eqref{eq:57}, it follows that $\psi(x)$ has the third derivative
on $[0,\infty)$.
Differentiating~\eqref{eq:57} gives\vadjust{\goodbreak}
\begin{align}
&-\mu\bar\mu(\lambda+ \bar\lambda)
\psi'''(x) +(\lambda\mu-\bar\lambda \bar
\mu)\psi''(x)\notag\\
&\quad =-\frac{2}{\bar\mu}\, \biggl(1+
\frac{2\mu}{\bar\mu} \biggr) \frac
{\lambda\bar\lambda\bar\theta}{\lambda+ 2\bar\lambda}\, I_{16}(x)
+\frac{\lambda\bar\lambda\mu\bar\theta}{\lambda+ 2\bar\lambda
} \psi''(x) + \biggl(1+
\frac{2\mu}{\bar\mu} \biggr) \frac{\lambda\bar\lambda\bar
\theta}{\lambda+ 2\bar\lambda}\, \psi'(x)
\notag\\
&\qquad+\frac{2}{\bar\mu}\, \biggl(1+\frac{2\mu}{\bar\mu} \biggr)
\frac
{\lambda\bar\lambda\bar\theta}{\lambda+ 2\bar\lambda}\, \psi(x), \quad x\in[0,\infty).\label{eq:58}
\end{align}

Multiplying~\eqref{eq:58} by $(-\bar\mu)$ and adding~\eqref{eq:57}
multiplied by 2 we get
\begin{equation*}
\begin{split} &\mu\bar\mu^2(\lambda+ \bar\lambda)
\psi'''(x) - \bigl( \bar\mu(\lambda\mu -
\bar\lambda\bar\mu) +2\mu\bar\mu(\lambda+ \bar\lambda) \bigr)\psi''(x)
+2(\lambda\mu-\bar\lambda\bar\mu)\psi'(x)
\\
&\quad=-\frac{\lambda\bar\lambda\mu\bar\mu\bar\theta}{\lambda+ 2\bar
\lambda}\, \psi''(x) -
\frac{\lambda\bar\lambda\bar\mu\bar\theta}{\lambda+ 2\bar\lambda}\, \psi'(x), \quad x\in[0,\infty), \end{split}
\end{equation*}
from which~\eqref{eq:50} follows.
\end{proof}

To formulate the next theorem, we define the following constants:
\begin{align*}
\mathrm{D}_1&= \bigl( \mu\bar\mu(2
\lambda+3\bar\lambda) (2\lambda +\bar\lambda) -\lambda\mu^2 (2\lambda+
\bar\lambda) +\lambda\bar\lambda\mu\bar\mu\theta \bigr)^2
\\
&\quad-4\mu^2 \bar\mu(\lambda+\bar\lambda) (2\lambda+\bar\lambda)
\bigl( 2(\bar\lambda\bar\mu-\lambda\mu) (2\lambda+\bar\lambda) +\lambda\bar\lambda
\mu\theta \bigr), \\
z_2&=\frac{- ( \mu\bar\mu(2\lambda+3\bar\lambda) (2\lambda+\bar
\lambda) -\lambda\mu^2 (2\lambda+\bar\lambda)
+\lambda\bar\lambda\mu\bar\mu\theta ) +\sqrt{\mathrm{D}_1}}{
2\mu^2 \bar\mu(\lambda+\bar\lambda) (2\lambda+\bar\lambda)},\\
z_3&=\frac{- ( \mu\bar\mu(2\lambda+3\bar\lambda) (2\lambda+\bar
\lambda) -\lambda\mu^2 (2\lambda+\bar\lambda)
+\lambda\bar\lambda\mu\bar\mu\theta ) -\sqrt{\mathrm{D}_1}}{
2\mu^2 \bar\mu(\lambda+\bar\lambda) (2\lambda+\bar\lambda)}
\end{align*}
and
\begin{equation*}
\begin{split} \Delta_1&= \biggl( \lambda+\bar\lambda-
\frac{\bar\lambda}{1-\bar\mu
z_2} \biggr) \biggl( \mu(\lambda+\bar\lambda)z_3 -
\biggl(\bar\lambda+\frac{\bar
\lambda\mu}{\bar\mu} \biggr) \frac{\bar\mu z_3}{1-\bar\mu z_3} -
\frac{\lambda\bar\lambda\theta}{2\lambda+\bar\lambda} \biggr)
\\
&\quad- \biggl( \lambda+\bar\lambda-\frac{\bar\lambda}{1-\bar\mu z_3} \biggr) \biggl( \mu(
\lambda+\bar\lambda)z_2 - \biggl(\bar\lambda+\frac{\bar
\lambda\mu}{\bar\mu}
\biggr) \frac{\bar\mu z_2}{1-\bar\mu z_2} -\frac{\lambda\bar\lambda\theta}{2\lambda+\bar\lambda} \biggr). \end{split} %
\end{equation*}

\begin{thm}
\label{thm:3}
Let the surplus process $ (X_t(x) )_{t\ge0}$ follow~\eqref
{eq:1} under the above assumptions with $\theta\neq0$ and $\bar\theta=0$.
Moreover, let claim and premium sizes be exponentially distributed with
means $\mu$ and $\bar\mu$, respectively, and let
$\bar\lambda\bar\mu>\lambda\mu$.

If $2(\bar\lambda\bar\mu-\lambda\mu) (2\lambda+\bar\lambda) +\lambda
\bar\lambda\mu\theta\le0$, then
\begin{equation}
\label{eq:59} \psi(x)=\frac{\lambda(1-\bar\mu z_3)}{\lambda(1-\bar\mu z_3) -\lambda
\mu z_3}\, e^{z_3 x}, \quad x\in[0,
\infty).
\end{equation}

If $2(\bar\lambda\bar\mu-\lambda\mu) (2\lambda+\bar\lambda) +\lambda
\bar\lambda\mu\theta>0$, then
\begin{equation}
\label{eq:60} \psi(x)=C_2 e^{z_2 x} +C_3
e^{z_3 x}, \quad x\in[0,\infty),
\end{equation}
where the constants $C_2$ and $C_3$ are determined from the system of
linear equations
\begin{equation}
\label{eq:61} \biggl( \lambda+\bar\lambda-\frac{\bar\lambda}{1-\bar\mu z_2} \biggr)
C_2 + \biggl( \lambda+\bar\lambda-\frac{\bar\lambda}{1-\bar\mu z_3} \biggr)
C_3 =\lambda
\end{equation}
and
\begin{equation}
\label{eq:62} %
\begin{split} & \biggl( \mu(\lambda+\bar
\lambda)z_2 - \biggl(\bar\lambda+\frac{\bar
\lambda\mu}{\bar\mu} \biggr)
\frac{\bar\mu z_2}{1-\bar\mu z_2} -\frac{\lambda\bar\lambda\theta}{2\lambda+\bar\lambda} \biggr) C_2
\\
&\quad+ \biggl( \mu(\lambda+\bar\lambda)z_3 - \biggl(\bar\lambda+
\frac
{\bar\lambda\mu}{\bar\mu} \biggr) \frac{\bar\mu z_3}{1-\bar\mu z_3} -\frac{\lambda\bar\lambda\theta}{2\lambda+\bar\lambda} \biggr)
C_3 =-\frac{\lambda\bar\lambda\theta}{2\lambda+\bar\lambda} \end{split} %
\end{equation}
provided that $\Delta_1 \neq0$.
\end{thm}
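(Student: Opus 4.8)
The plan is to lean entirely on Lemma~\ref{lem:1}: under the present hypotheses ($\theta\neq0$, $\bar\theta=0$, exponential claim and premium sizes) it already gives that $\psi$ satisfies the third-order constant-coefficient ODE~\eqref{eq:49}. Since~\eqref{eq:49} carries no zeroth-order term, its characteristic polynomial factors as $z\bigl(\mathrm{A}z^2+\mathrm{B}z+\mathrm{C}\bigr)$, with $\mathrm{A}=\mu^2\bar\mu(\lambda+\bar\lambda)(2\lambda+\bar\lambda)>0$, with $\mathrm{B}$ the coefficient of $\psi''$, and with $\mathrm{C}=2(\bar\lambda\bar\mu-\lambda\mu)(2\lambda+\bar\lambda)+\lambda\bar\lambda\mu\theta$ the coefficient of $\psi'$. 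Hence the characteristic roots are $0$ together with the two roots of the quadratic, whose discriminant is exactly $\mathrm{D}_1$; these two roots are $z_{2},z_{3}=\bigl(-\mathrm{B}\pm\sqrt{\mathrm{D}_1}\bigr)/(2\mathrm{A})$, and the general solution is $\psi(x)=C_1+C_2e^{z_2x}+C_3e^{z_3x}$. I would immediately record the Vieta relations $z_2z_3=\mathrm{C}/\mathrm{A}$ and $z_2+z_3=-\mathrm{B}/\mathrm{A}$, since the whole dichotomy of the theorem is controlled by them.

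The next step is the boundary behaviour. Under the net-profit assumption $\bar\lambda\bar\mu>\lambda\mu$ one has $\lim_{x\to\infty}\psi(x)=0$ (the standard drift/renewal argument referred to in Remark~\ref{rem:1}), which forces $C_1=0$ and, crucially, forces the coefficient of any exponential with nonnegative real part to vanish. Everything therefore reduces to locating $z_2,z_3$ relative to the origin. I expect to show first that $\mathrm{B}>0$ for all admissible $\theta\in[-1,1]$: writing $\mathrm{B}=\mu(2\lambda+\bar\lambda)\bigl[\bar\mu(2\lambda+3\bar\lambda)-\lambda\mu\bigr]+\lambda\bar\lambda\mu\bar\mu\theta$, the bracket is positive because $\bar\mu(2\lambda+3\bar\lambda)\ge3\bar\lambda\bar\mu>\lambda\mu$, and a short estimate using $\lambda\mu<\bar\lambda\bar\mu$ shows the first term exceeds $\lambda\bar\lambda\mu\bar\mu|\theta|$; hence $z_2+z_3=-\mathrm{B}/\mathrm{A}<0$. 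With $\mathrm{B}>0$ in hand, the sign of $\mathrm{C}$ splits the two cases of the theorem exactly: if $\mathrm{C}\le0$ then $z_2z_3\le0$, so $\mathrm{D}_1\ge\mathrm{B}^2>0$ and the roots straddle the origin, $z_2\ge0>z_3$; boundedness then kills $C_2$ as well and only $C_3e^{z_3x}$ survives. If $\mathrm{C}>0$ then $z_2z_3>0$ together with $z_2+z_3<0$ yields two negative roots, so the full two-exponential form is retained.

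It remains to fix the surviving constants from the integral equation itself, which is necessary because differentiation to obtain~\eqref{eq:49} discards information. I would substitute the surviving exponential form back into the original relation~\eqref{eq:48} and into the first intermediate relation~\eqref{eq:52} and evaluate both at $x=0$, using $I_{13}(0)=I_{14}(0)=0$ and $I_{15}(0)=\sum_i C_i/(1-\bar\mu z_i)$. In the one-exponential case ($\mathrm{C}\le0$) a single such relation determines $C_3$ and produces~\eqref{eq:59}; in the two-exponential case ($\mathrm{C}>0$) the two relations are precisely the linear system~\eqref{eq:61}--\eqref{eq:62}, whose determinant is $\Delta_1$, so it has a unique solution exactly when $\Delta_1\neq0$, giving~\eqref{eq:60}.

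The hard part, and the step deserving the most care, is the root analysis: proving $\mathrm{B}>0$ uniformly in $\theta\in[-1,1]$ under $\bar\lambda\bar\mu>\lambda\mu$, and, in the case $\mathrm{C}>0$, checking that $\mathrm{D}_1\ge0$ so that the two roots are genuinely real and negative rather than a conjugate pair (an oscillatory solution would be incompatible with a monotone ruin probability). Once the position of the roots relative to the imaginary axis and the origin is settled, the remaining ingredients—the factorisation of the characteristic polynomial, the use of $\psi(\infty)=0$ to discard the growing and constant modes, and the evaluation of the two boundary relations at $x=0$—are routine.
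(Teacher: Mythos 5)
Your route coincides with the paper's own proof in every structural respect: invoke Lemma~\ref{lem:1}; factor the characteristic equation \eqref{eq:63} as $z(\mathrm{A}z^{2}+\mathrm{B}z+\mathrm{C})=0$, with $\mathrm{C}=2(\bar\lambda\bar\mu-\lambda\mu)(2\lambda+\bar\lambda)+\lambda\bar\lambda\mu\theta$; prove $\mathrm{B}>0$ from $\bar\lambda\bar\mu>\lambda\mu$ (your estimate is essentially the paper's); use Vieta to conclude $z_3<0$ always and $z_2\ge0$ exactly when $\mathrm{C}\le0$; discard the constant and non-decaying modes via $\lim_{x\to\infty}\psi(x)=0$; and fix the surviving constants by evaluating \eqref{eq:48} and \eqref{eq:52} at $x=0$, which is precisely how the paper arrives at \eqref{eq:61}--\eqref{eq:62} and the condition $\Delta_1\neq0$. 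So there is no methodological divergence to weigh.

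The genuine gap is the realness of $z_2$ and $z_3$ in the second case. Your bound $\mathrm{D}_1\ge\mathrm{B}^2>0$ is available only when $\mathrm{C}\le0$; when $\mathrm{C}>0$ --- exactly the case in which the theorem asserts the two-exponential formula \eqref{eq:60} --- you merely flag the verification of $\mathrm{D}_1\ge0$ as ``the hard part'' and offer the parenthetical heuristic that complex roots would make $\psi$ oscillate. That is not a proof: to make it rigorous you would need to show that a decaying oscillatory solution is incompatible with the nonnegativity of a ruin probability satisfying \eqref{eq:48}, and even then the borderline case $\mathrm{D}_1=0$ (a repeated negative root) would remain unexcluded, in which situation \eqref{eq:60} is not the general solution of \eqref{eq:49} and $\Delta_1=0$, so the theorem's conclusion fails to even parse. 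This is exactly where the paper invests its main computational effort: it rewrites $\mathrm{D}_1/\mu^{2}$ as the perfect square $\bigl((2\lambda+\bar\lambda)(\bar\lambda\bar\mu-\lambda\mu)+\lambda\bar\lambda\bar\mu\theta\bigr)^{2}$ plus a remainder of the form $4\lambda(2\lambda+\bar\lambda)R$, and proves $R>0$ for all $\theta\in[-1,1]$ by observing that the only $\theta$-dependent factor in $R$, namely $\lambda(\mu+\bar\mu)^{2}+(\mu+\bar\mu)(\bar\lambda\bar\mu-\lambda\mu)+\bar\lambda\bar\mu\theta(\bar\mu-\mu)$, is affine in $\theta$ and positive at both endpoints $\theta=\pm1$ under the net profit condition. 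Until you supply this computation (or a substitute that also rules out $\mathrm{D}_1=0$), the case $\mathrm{C}>0$ of the theorem --- formulas \eqref{eq:60}--\eqref{eq:62} --- is unestablished.
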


\begin{proof}
By Lemma~\ref{lem:1}, $\psi(x)$ is a solution to~\eqref{eq:49}.
We now find the general solution to~\eqref{eq:49}. Its characteristic
equation is
\begin{align}
&\mu^2 \bar\mu(\lambda+\bar
\lambda) (2\lambda+\bar\lambda) z^3 + \bigl( \mu\bar\mu(2\lambda+3\bar
\lambda) (2\lambda+\bar\lambda) -\lambda\mu^2 (2\lambda+\bar\lambda) +
\lambda\bar\lambda\mu\bar\mu\theta \bigr) z^2\notag
\\
&\quad+ \bigl( 2(\bar\lambda\bar\mu-\lambda\mu) (2\lambda+\bar\lambda ) +\lambda
\bar\lambda\mu\theta \bigr) z =0.\label{eq:63} 
\end{align}

It is evident that $z_1=0$ is a solution to~\eqref{eq:63}.
Next, we prove that the equation
\begin{align}
&\mu^2 \bar\mu(\lambda+\bar
\lambda) (2\lambda+\bar\lambda) z^2 + \bigl( \mu\bar\mu(2\lambda+3\bar
\lambda) (2\lambda+\bar\lambda) -\lambda\mu^2 (2\lambda+\bar\lambda) +
\lambda\bar\lambda\mu\bar\mu\theta \bigr) z\notag
\\
&\quad+ \bigl( 2(\bar\lambda\bar\mu-\lambda\mu) (2\lambda+\bar\lambda ) +\lambda
\bar\lambda\mu\theta \bigr)=0 \label{eq:64}
\end{align}
has two real roots.
To this end, we show that its discriminant $\mathrm{D}_1$ defined
before the assertion of the theorem is positive.
We have
\begin{equation*}
\begin{split} \mathrm{D}_1/\mu^2 &= \bigl(
(2\lambda\bar\mu+3\bar\lambda\bar\mu -\lambda\mu) (2\lambda+\bar\lambda) +\lambda
\bar\lambda\bar\mu\theta \bigr)^2
\\
&\quad-8\bar\mu(\lambda+\bar\lambda) (\bar\lambda\bar\mu-\lambda\mu ) (2\lambda+
\bar\lambda)^2 -4\lambda\bar\lambda\mu\bar\mu\theta(\lambda+\bar
\lambda) (2\lambda +\bar\lambda)
\\
&=(2\lambda+\bar\lambda)^2 \bigl( (2\lambda\bar\mu+3\bar\lambda\bar
\mu-\lambda\mu)^2 -8\bar\mu(\lambda+\bar\lambda) (\bar\lambda\bar\mu-
\lambda\mu) \bigr)
\\
&\quad+2\lambda\bar\lambda\bar\mu\theta(2\lambda+\bar\lambda) \bigl( 2\lambda
\bar\mu+3\bar\lambda\bar\mu-\lambda\mu-2\mu(\lambda +\bar\lambda) \bigr) +(\lambda
\bar\lambda\bar\mu\theta)^2
\\
&=(2\lambda+\bar\lambda)^2 \bigl( 2\lambda(\mu+\bar\mu) +(\bar
\lambda \bar\mu-\lambda\mu) \bigr)^2
\\
&\quad+2\lambda\bar\lambda\bar\mu\theta(2\lambda+\bar\lambda) \bigl( 2(\bar\mu-
\mu) (\lambda+\bar\lambda) +(\bar\lambda\bar\mu+\lambda \mu) \bigr) +(\lambda\bar
\lambda\bar\mu\theta)^2
\\
&=(2\lambda+\bar\lambda)^2 (\bar\lambda\bar\mu-\lambda
\mu)^2 +2\lambda\bar\lambda\bar\mu\theta(2\lambda+\bar\lambda) (\bar
\lambda \bar\mu-\lambda\mu) +(\lambda\bar\lambda\bar\mu\theta)^2
\\
&\quad+(2\lambda+\bar\lambda)^2 \bigl( 4\lambda^2 (\mu+
\bar\mu)^2 +4\lambda(\mu+\bar\mu) (\bar\lambda\bar\mu-\lambda\mu)
\bigr)
\\
&\quad+4\lambda\bar\lambda\bar\mu\theta(\bar\mu-\mu) (\lambda+\bar \lambda) (2
\lambda+\bar\lambda)
\\
&=(2\lambda+\bar\lambda)^2 (\bar\lambda\bar\mu-\lambda
\mu)^2 +2\lambda\bar\lambda\bar\mu\theta(2\lambda+\bar\lambda) (\bar
\lambda \bar\mu-\lambda\mu) +(\lambda\bar\lambda\bar\mu\theta)^2
\\
&\quad+4\lambda(2\lambda+\bar\lambda) \bigl( \lambda^2 (\mu+\bar\mu
)^2 +\lambda(\mu+\bar\mu) (\bar\lambda\bar\mu-\lambda\mu)
\\
&\qquad+(\lambda+\bar\lambda) \bigl( \lambda(\mu+\bar\mu)^2 +(\mu +
\bar\mu) (\bar\lambda\bar\mu-\lambda\mu) +\bar\lambda\bar\mu\theta(\bar\mu-\mu)
\bigr) \bigr). \end{split} %
\end{equation*}

Since $\bar\lambda\bar\mu>\lambda\mu$, it suffices to show that
\[
\lambda(\mu+\bar\mu)^2 +(\mu+\bar\mu) (\bar\lambda\bar\mu-\lambda\mu
) +\bar\lambda\bar\mu\theta(\bar\mu-\mu) )>0.
\]

It is obvious that the minimal value of the expression on the left-hand
side of the above inequality is attained
when either $\theta=1$ or $\theta=-1$ and equals either $\lambda\bar\mu
(\mu+\bar\mu) +2\lambda\bar\mu^2$ or
$\lambda\bar\mu(\mu+\bar\mu) +2\bar\lambda\mu\bar\mu$, respectively.
Both these expressions are positive.
Thus, $\mathrm{D}_1>0$ and~\eqref{eq:64} has two real roots $z_2$ and
$z_3$ defined before the assertion of the theorem.

Next, it is easily seen that
\[
\mu\bar\mu(2\lambda+3\bar\lambda) (2\lambda+\bar\lambda) -\lambda
\mu^2 (2\lambda+\bar\lambda) +\lambda\bar\lambda\mu\bar\mu\theta>0.
\]

Indeed, since $\bar\lambda\bar\mu>\lambda\mu$, we have
\begin{equation*}
\begin{split} &\mu\bar\mu(2\lambda+3\bar\lambda) (2\lambda+\bar
\lambda) -\lambda \mu^2 (2\lambda+\bar\lambda) +\lambda\bar\lambda\mu
\bar\mu\theta
\\
&\quad=\mu(2\lambda+\bar\lambda) (2\lambda\bar\mu+3\bar\lambda\bar \mu-\lambda
\mu) +\lambda\bar\lambda\mu\bar\mu\theta
\\
&\quad>2\mu\bar\mu(2\lambda+\bar\lambda) (\lambda+\bar\lambda) -\lambda\bar
\lambda\mu\bar\mu >3\lambda\bar\lambda\mu\bar\mu>0. \end{split} %
\end{equation*}

Therefore, by Vieta's theorem applied to~\eqref{eq:64}, $z_3<0$.
Moreover, $z_2\ge0$ if
$2(\bar\lambda\bar\mu-\lambda\mu) (2\lambda+\bar\lambda) +\lambda
\bar\lambda\mu\theta\le0$,
and $z_2<0$ if $2(\bar\lambda\bar\mu-\lambda\mu) (2\lambda+\bar
\lambda) +\lambda\bar\lambda\mu\theta>0$.

Thus, the general solution to~\eqref{eq:49} has the form
\begin{equation}
\label{eq:65} \psi(x)=C_1 +C_2 e^{z_2 x}
+C_3 e^{z_3 x}, \quad x\in[0,\infty),
\end{equation}
where $C_1$, $C_2$ and $C_3$ are some constants. To determine them, we
use the following boundary conditions.
Firstly, since $\bar\lambda\bar\mu>\lambda\mu$, using standard
considerations (see, e.g., \cite{MiRa2016,MiRaSt2015,RoScScTe1999})
we can easily show that $\lim_{x\to\infty} \psi(x) =0$. Consequently,
$C_1=0$ and $C_2=0$ if
$2(\bar\lambda\bar\mu-\lambda\mu) (2\lambda+\bar\lambda) +\lambda
\bar\lambda\mu\theta\le0$,
and $C_1=0$ if $2(\bar\lambda\bar\mu-\lambda\mu) (2\lambda+\bar
\lambda) +\lambda\bar\lambda\mu\theta>0$.
Secondly, we use intermediate equations to find other constants.

Let now $2(\bar\lambda\bar\mu-\lambda\mu) (2\lambda+\bar\lambda)
+\lambda\bar\lambda\mu\theta\le0$.
The constant $C_3$ is determined by letting $x=0$ in~\eqref{eq:48},
i.e. from the equation
\begin{equation}
\label{eq:66} (\lambda+\bar\lambda)\psi(0)= \frac{\bar\lambda}{\bar\mu}\, \int
_0^{\infty} \psi(u) e^{-u/\bar\mu}\, \mathrm{d}u +
\lambda.
\end{equation}

Substituting $\psi(x)=C_3 e^{z_3 x}$ into~\eqref{eq:66} gives
\[
(\lambda+\bar\lambda)C_3= \frac{\bar\lambda}{1-\bar\mu z_3}\, C_3 +
\lambda,
\]
from which~\eqref{eq:59} follows immediately.

If $2(\bar\lambda\bar\mu-\lambda\mu) (2\lambda+\bar\lambda) +\lambda
\bar\lambda\mu\theta>0$, then the constants $C_2$ and $C_3$
are determined by letting $x=0$ in~\eqref{eq:48} and~\eqref{eq:52},
i.e. from~\eqref{eq:66} and the equation
\begin{equation}
\label{eq:67} %
\begin{split} &\mu(\lambda+\bar\lambda)
\psi'(0) +\bar\lambda\psi(0)
\\
&\quad=\frac{\bar\lambda}{\bar\mu}\, \biggl( 1+\frac{\mu}{\bar\mu} \biggr) \int
_0^{\infty} \psi(u) e^{-u/\bar\mu}\, \mathrm{d}u +
\biggl( \frac{\lambda\bar\lambda\theta}{2\lambda+\bar\lambda} -\frac
{\bar\lambda\mu}{\bar\mu} \biggr) \psi(0) -
\frac{\lambda\bar\lambda\theta}{2\lambda+\bar\lambda}. \end{split} %
\end{equation}

Substituting~\eqref{eq:60} into~\eqref{eq:66} and~\eqref{eq:67} yields
equations~\eqref{eq:61} and~\eqref{eq:62},
respectively. The system of equations~\eqref{eq:61} and~\eqref{eq:62}
has a unique solution provided that $\Delta_1 \neq0$.

Note that letting $x=0$ in~\eqref{eq:54} (and in~\eqref{eq:52} when
$C_2=0$) gives no additional information about unknown constants.
Nevertheless, the equalities must hold for the values of the constants
found from~\eqref{eq:48} (and~\eqref{eq:52} when $C_2 \neq0$).
Consequently, differential equation~\eqref{eq:49} has the unique
solution given by~\eqref{eq:59} or~\eqref{eq:60}.
Since we have derived~\eqref{eq:49} from~\eqref{eq:48} without any
additional assumptions, we conclude that the function $\psi(x)$ given
by~\eqref{eq:59} or~\eqref{eq:60} is a unique solution to~\eqref{eq:48}
satisfying the certain conditions.
This guaranties that the solution we have found is the ruin probability
and completes the proof.
\end{proof}

The case $\theta=0$ and $\bar\theta\neq0$ can be considered in a
similar way by finding the required solution to equation~\eqref{eq:50}.

\subsection{The ruin probability in the model without dependence}
\label{sec:5.2}

Let now $\theta=0$ and $\bar\theta=0$. We set
\begin{equation*}
\psi(x)= %
\begin{cases}
\psi_1(x) &\text{if} \ x\in[0,b], \\
\psi_2(x) &\text{if} \ x\in[b,\infty).
\end{cases} %
\end{equation*}
Then equations~\eqref{eq:9} and~\eqref{eq:26} for the ruin probability
$\psi(x)$
in the case of exponentially distributed claim and premium sizes take
the form
\begin{equation}
\label{eq:68} %
\begin{split} &(\lambda+\bar\lambda)
\psi_1(x)\\
&\quad =\frac{\lambda}{\mu}\, e^{-x/\mu} \int
_0^x \psi_1(u) e^{u/\mu}\,
\mathrm{d}u +\lambda e^{-x/\mu}
+\frac{\bar\lambda}{\bar\mu}\, e^{x/\bar\mu} \int_x^b
\psi_1(u) e^{-u/\bar\mu}\, \mathrm{d}u\\
&\qquad +\frac{\bar\lambda}{\bar\mu}\,
e^{x/\bar\mu} \int_b^{\infty} \psi_2(u)
e^{-u/\bar\mu}\, \mathrm{d}u, \quad x\in[0,b], \end{split} %
\end{equation}
and
\begin{equation}
\label{eq:69} %
\begin{split} &d\psi'_2(x) +(
\lambda+\bar\lambda)\psi_2(x)\\
&\quad =\frac{\lambda}{\mu}\, e^{-x/\mu}
\int_0^b \psi_1(u)
e^{u/\mu}\, \mathrm{d}u
+\frac{\lambda}{\mu}\, e^{-x/\mu} \int_b^x
\psi_2(u) e^{u/\mu}\, \mathrm{d}u +\lambda e^{-x/\mu}
\\
&\qquad+\frac{\bar\lambda}{\bar\mu}\, e^{x/\bar\mu} \int_x^{\infty}
\psi _2(u) e^{-u/\bar\mu}\, \mathrm{d}u, \quad x\in[b,\infty),
\end{split} %
\end{equation}
respectively.

We now show that integro-differential equations~\eqref{eq:68} and~\eqref
{eq:69} can be reduced to linear differential equations
with constant coefficients.

\begin{lemma}
\label{lem:2}
Let the surplus process $ (X_t^b(x) )_{t\ge0}$ follow~\eqref
{eq:6} under the above assumptions with $\theta=0$ and $\bar\theta=0$,
and let claim and premium sizes be exponentially distributed with means
$\mu$ and $\bar\mu$, respectively.
Then $\psi_1(x)$ and $\psi_2(x)$ are solutions to the differential equations
\begin{equation}
\label{eq:70} \mu\bar\mu(\lambda+\bar\lambda)\psi''_1(x)
+(\bar\lambda\bar\mu -\lambda\mu)\psi'_1(x)=0, \quad x
\in[0,b],
\end{equation}
and
\begin{equation}
\label{eq:71} %
\begin{split} &d\mu\bar\mu\psi'''_2(x)
+ \bigl( d\bar\mu-d\mu+\mu\bar\mu(\lambda +\bar\lambda) \bigr)
\psi''_2(x)
\\
&\quad+(\bar\lambda\bar\mu-\lambda\mu-d)\psi'_2(x)=0,
\quad x\in [b,\infty). \end{split} %
\end{equation}
\end{lemma}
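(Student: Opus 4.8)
The plan is to convert each integro-differential equation into a linear ODE by repeatedly applying first-order differential operators tailored to annihilate the exponentially weighted integrals, exactly as in the proofs of Theorem~\ref{thm:1} and Lemma~\ref{lem:1}. The crucial observation is that under \eqref{eq:45}--\eqref{eq:46} every integral appearing in \eqref{eq:68} and \eqref{eq:69} has one of two shapes: a ``forward'' term $A(x)=\frac1\mu e^{-x/\mu}\int_{\,\cdot}^{x}\psi(u)e^{u/\mu}\,\mathrm{d}u$ and a ``backward'' term $G(x)=\frac1{\bar\mu}e^{x/\bar\mu}\int_{x}^{\,\cdot}\psi(u)e^{-u/\bar\mu}\,\mathrm{d}u$. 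These satisfy the elementary relations $\bigl(\frac{\mathrm{d}}{\mathrm{d}x}+\frac1\mu\bigr)A=\frac1\mu\psi$ and $\bigl(\frac{\mathrm{d}}{\mathrm{d}x}-\frac1{\bar\mu}\bigr)G=-\frac1{\bar\mu}\psi$, and in addition $\bigl(\frac{\mathrm{d}}{\mathrm{d}x}+\frac1\mu\bigr)e^{-x/\mu}=0$. Thus applying $\bigl(\frac{\mathrm{d}}{\mathrm{d}x}+\frac1\mu\bigr)$ removes the forward integral together with the free exponential, and a subsequent application of $\bigl(\frac{\mathrm{d}}{\mathrm{d}x}-\frac1{\bar\mu}\bigr)$ removes the backward integral; since these two operators have constant coefficients they commute, so the order is immaterial.

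To obtain \eqref{eq:70}, I would first note that the right-hand side of \eqref{eq:68} is differentiable in $x$ (being built from continuous integrands and exponentials), so $\psi_1$ is as smooth as required. I would combine the two $\bar\lambda$-integrals in \eqref{eq:68} into the single backward term $\frac{\bar\lambda}{\bar\mu}e^{x/\bar\mu}\int_x^\infty\psi(u)e^{-u/\bar\mu}\,\mathrm{d}u$ (with $\psi=\psi_1$ on $[0,b]$, $\psi=\psi_2$ on $[b,\infty)$) and identify the remaining term $\frac{\lambda}{\mu}e^{-x/\mu}\int_0^x\psi_1(u)e^{u/\mu}\,\mathrm{d}u$ as the forward term. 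Applying $\bigl(\frac{\mathrm{d}}{\mathrm{d}x}+\frac1\mu\bigr)$ then eliminates the forward integral and the exponential $e^{-x/\mu}$, leaving a first-order relation in $\psi_1$ containing only the backward integral; applying $\bigl(\frac{\mathrm{d}}{\mathrm{d}x}-\frac1{\bar\mu}\bigr)$ to that relation eliminates the backward integral as well. What remains is a second-order equation, and upon clearing denominators by $\mu\bar\mu$ the zeroth-order terms cancel identically, yielding precisely $\mu\bar\mu(\lambda+\bar\lambda)\psi_1''+(\bar\lambda\bar\mu-\lambda\mu)\psi_1'=0$.

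For \eqref{eq:71} the same two operators are used on \eqref{eq:69}; the only structural difference is that the left-hand side already carries the term $d\psi_2'$, so each elimination raises the order by one and the procedure lands at third order, as \eqref{eq:71} requires. Here the constant $\int_0^b\psi_1(u)e^{u/\mu}\,\mathrm{d}u$ merely rescales the free exponential $e^{-x/\mu}$, which is still annihilated by $\bigl(\frac{\mathrm{d}}{\mathrm{d}x}+\frac1\mu\bigr)$; the forward term is now $\frac\lambda\mu e^{-x/\mu}\int_b^x\psi_2(u)e^{u/\mu}\,\mathrm{d}u$ and the backward term is $\frac{\bar\lambda}{\bar\mu}e^{x/\bar\mu}\int_x^\infty\psi_2(u)e^{-u/\bar\mu}\,\mathrm{d}u$, both obeying the relations above. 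After the two eliminations and multiplication by $\mu\bar\mu$, the zeroth-order terms again cancel, producing \eqref{eq:71} with the stated coefficients $d\mu\bar\mu$, $d\bar\mu-d\mu+\mu\bar\mu(\lambda+\bar\lambda)$ and $\bar\lambda\bar\mu-\lambda\mu-d$.

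The only real work, and the step I expect to be the main obstacle, is verifying that the zeroth-order ($\psi$) terms cancel after the two differentiations; this cancellation is what forces the resulting ODEs to be homogeneous, and it relies on the exact matching of the coefficients $\frac\lambda\mu$, $\frac{\bar\lambda}{\bar\mu}$ and $\frac{\bar\lambda}{\bar\mu}\bigl(\frac1\mu+\frac1{\bar\mu}\bigr)$ generated by the two operators. Everything else is routine differentiation and collection of terms. If one prefers to stay closer to the style of the preceding proofs, the identical result is reached by differentiating \eqref{eq:68} (respectively \eqref{eq:69}) once, forming a suitable linear combination with the undifferentiated equation to remove one integral, and then differentiating and combining once more; this is merely the operator factorization written out.
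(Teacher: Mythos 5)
Your proposal is correct and is essentially the paper's own proof: the paper implements exactly your two operators $\bigl(\frac{\mathrm{d}}{\mathrm{d}x}+\frac{1}{\mu}\bigr)$ and $\bigl(\frac{\mathrm{d}}{\mathrm{d}x}-\frac{1}{\bar\mu}\bigr)$ by differentiating \eqref{eq:68} and \eqref{eq:69} and forming linear combinations with the factors $\mu$ and $-\bar\mu$ (equations \eqref{eq:72}--\eqref{eq:77}), with the same bootstrapping of smoothness and the same zeroth-order cancellation you flag. The only cosmetic difference is the order of elimination (the paper removes the backward integral first for \eqref{eq:70} and the forward one first for \eqref{eq:71}), which, as you note, is immaterial since the constant-coefficient operators commute.
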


\begin{proof}
From~\eqref{eq:68}, it is easily seen that $\psi(x)$ is differentiable
on $[0,b]$.
Differentiating~\eqref{eq:68} yields
\begin{equation}
\label{eq:72} %
\begin{split} &(\lambda+\bar\lambda)
\psi'_1(x)\\
&\quad =-\frac{1}{\mu} \Biggl(
\frac{\lambda
}{\mu}\, e^{-x/\mu} \int_0^x
\psi_1(u) e^{u/\mu}\, \mathrm{d}u +\lambda e^{-x/\mu}
\Biggr)
\\
&\qquad+\frac{1}{\bar\mu} \Biggl( \frac{\bar\lambda}{\bar\mu}\, e^{x/\bar\mu} \int
_x^b \psi_1(u) e^{-u/\bar\mu}\,
\mathrm{d}u +\frac{\bar\lambda}{\bar\mu}\, e^{x/\bar\mu} \int_b^{\infty}
\psi_2(u) e^{-u/\bar\mu}\, \mathrm{d}u \Biggr)
\\
&\qquad+ \biggl( \frac{\lambda}{\mu} -\frac{\bar\lambda}{\bar\mu} \biggr)
\psi_1(x), \quad x\in[0,b]. \end{split} %
\end{equation}

Multiplying~\eqref{eq:72} by $(-\bar\mu)$ and adding~\eqref{eq:68} we obtain
\begin{equation}
\label{eq:73} %
\begin{split} &-\bar\mu(\lambda+\bar\lambda)
\psi'_1(x) +\lambda \biggl(1+\frac{\bar
\mu}{\mu}
\biggr)\psi_1(x)
\\
&\quad= \biggl(1+\frac{\bar\mu}{\mu} \biggr) \Biggl( \frac{\lambda}{\mu
}\,
e^{-x/\mu} \int_0^x
\psi_1(u) e^{u/\mu}\, \mathrm{d}u +\lambda e^{-x/\mu}
\Biggr), \quad x\in[0,b]. \end{split} %
\end{equation}

From~\eqref{eq:73}, it follows that $\psi(x)$ is twice differentiable
on $[0,b]$.
Differentiating~\eqref{eq:73} gives
\begin{equation}
\label{eq:74} %
\begin{split} &-\bar\mu(\lambda+\bar\lambda)
\psi''_1(x) +\lambda \biggl(1+
\frac{\bar
\mu}{\mu} \biggr)\psi'_1(x)
\\
&\quad=-\frac{1}{\mu} \biggl(1+\frac{\bar\mu}{\mu} \biggr) \Biggl(
\frac
{\lambda}{\mu}\, e^{-x/\mu} \int_0^x
\psi_1(u) e^{u/\mu}\, \mathrm{d}u +\lambda e^{-x/\mu}
\Biggr)
\\
&\qquad+\frac{\lambda}{\mu} \biggl(1+\frac{\bar\mu}{\mu} \biggr)\psi
_1(x), \quad x\in[0,b]. \end{split} %
\end{equation}

Multiplying~\eqref{eq:74} by $\mu$ and adding~\eqref{eq:73} we
get~\eqref{eq:70}.

From~\eqref{eq:69}, it is easily seen that $\psi(x)$ is twice
differentiable on $[b,\infty)$.
Differentiating~\eqref{eq:69} yields
\begin{equation}
\label{eq:75} %
\begin{split} &d\psi''_2(x)
+(\lambda+\bar\lambda)\psi'_2(x)
\\
&\quad=-\frac{1}{\mu} \Biggl( \frac{\lambda}{\mu}\, e^{-x/\mu} \int
_0^b \psi_1(u) e^{u/\mu}\,
\mathrm{d}u +\frac{\lambda}{\mu}\, e^{-x/\mu} \int_b^x
\psi_2(u) e^{u/\mu}\, \mathrm {d}u +\lambda e^{-x/\mu}
\Biggr)
\\
&\qquad+\frac{\bar\lambda}{\bar\mu^2}\, e^{x/\bar\mu} \int_x^{\infty}
\psi_2(u) e^{-u/\bar\mu}\, \mathrm{d}u + \biggl(
\frac{\lambda}{\mu} -\frac{\bar\lambda}{\bar\mu} \biggr)\psi _2(x), \quad x
\in[b,\infty). \end{split} %
\end{equation}

Multiplying~\eqref{eq:75} by $\mu$ and adding~\eqref{eq:69} we obtain
\begin{equation}
\label{eq:76} %
\begin{split} &d\mu\psi''_2(x)
+ \bigl(d+\mu(\lambda+\bar\lambda) \bigr)\psi'_2(x) +
\bar\lambda \biggl(1+\frac{\mu}{\bar\mu} \biggr)\psi_2(x)
\\
&\quad=\frac{\bar\lambda}{\bar\mu} \biggl(1+\frac{\mu}{\bar\mu} \biggr) e^{x/\bar\mu}
\int_x^{\infty} \psi_2(u) e^{-u/\bar\mu}
\, \mathrm{d}u, \quad x\in[b,\infty). \end{split} %
\end{equation}

From~\eqref{eq:76}, it follows that $\psi(x)$ has the third derivative
on $[b,\infty)$.
Differentiating~\eqref{eq:76} gives
\begin{equation}
\label{eq:77} %
\begin{split} &d\mu\psi'''_2(x)
+ \bigl(d+\mu(\lambda+\bar\lambda) \bigr)\psi''_2(x)
+\bar\lambda \biggl(1+\frac{\mu}{\bar\mu} \biggr)\psi'_2(x)
\\
&\; =\frac{\bar\lambda}{\bar\mu^2} \biggl(1+\frac{\mu}{\bar\mu} \biggr) e^{x/\bar\mu} \!
\int_x^{\infty} \psi_2(u) e^{-u/\bar\mu}
\, \mathrm{d}u -\frac{\bar\lambda}{\bar\mu} \biggl(1+\frac{\mu}{\bar\mu} \biggr)\psi
_2(x), \; x\in[b,\infty). \end{split} %
\end{equation}

Multiplying~\eqref{eq:77} by $(-\bar\mu)$ and adding~\eqref{eq:76} we
get~\eqref{eq:71}.
\end{proof}

To formulate the next theorem, we define the following constants:
\begin{align*}
\mathrm{D}_2&= \bigl( d\bar\mu-d\mu+\mu\bar\mu(\lambda+\bar\lambda)
\bigr)^2 -4d\mu\bar\mu(\bar\lambda\bar\mu-\lambda\mu-d),\\
z_5&=\frac{\lambda\mu-\bar\lambda\bar\mu}{\mu\bar\mu(\lambda+\bar
\lambda)},\\
z_7&=\frac{- ( d\bar\mu-d\mu+\mu\bar\mu(\lambda+\bar\lambda)
 ) +\sqrt{\mathrm{D}_2}}{2d\mu\bar\mu}
\end{align*}
and
\[
z_8=\frac{- ( d\bar\mu-d\mu+\mu\bar\mu(\lambda+\bar\lambda)
 ) -\sqrt{\mathrm{D}_2}}{2d\mu\bar\mu}.
\]

\begin{thm}
\label{thm:4}
Let the surplus process $ (X_t^b(x) )_{t\ge0}$ follow~\eqref
{eq:6} under the above assumptions with $\theta=0$ and $\bar\theta=0$,
and let claim and premium sizes be exponentially distributed with means
$\mu$ and $\bar\mu$, respectively, and let
$\bar\lambda\bar\mu>\lambda\mu+d$.
Then we have
\begin{equation}
\label{eq:78} \psi_1(x)=C_4 +C_5
e^{z_5 x}, \quad x\in[0,b],
\end{equation}
and
\begin{equation}
\label{eq:79} \psi_2(x)=C_7 e^{z_7 x}
+C_8 e^{z_8 x}, \quad x\in[b,\infty),
\end{equation}
where the constants $C_4$, $C_5$, $C_7$ and $C_8$ are determined from
the system of linear equations~\eqref{eq:80}--\eqref{eq:83}:
\begin{equation}
\label{eq:80} 
 \bigl( \lambda e^{b/\bar\mu} +\bar
\lambda \bigr) C_4 + \frac{\lambda
+\bar\lambda}{\mu+\bar\mu} \bigl( \bar\mu
e^{b/\bar\mu} +\mu e^{z_5 b} \bigr) C_5
+\frac{\bar\lambda e^{z_7 b}}{\bar\mu z_7 -1}\, C_7 +\frac{\bar
\lambda e^{z_8 b}}{\bar\mu z_8 -1}\,
C_8 =\lambda e^{b/\bar\mu}, 
\end{equation}
\begin{equation}
\label{eq:81} \lambda \biggl(1+\frac{\bar\mu}{\mu} \biggr) C_4 +
\frac{\bar\mu(\lambda+\bar\lambda)}{\mu}\, C_5 =\lambda \biggl(1+\frac{\bar\mu}{\mu}
\biggr),
\end{equation}
\begin{equation}
\label{eq:82} C_4 +e^{z_5 b} C_5
-e^{z_7 b} C_7 -e^{z_8 b} C_8=0
\end{equation}
and
\begin{equation}
\label{eq:83} %
\begin{split} &\lambda \bigl(e^{-b/\mu}-1
\bigr) C_4 + \frac{\bar\mu(\lambda+\bar
\lambda)}{\mu+\bar\mu} \bigl( e^{-b/\mu}
-e^{z_5 b} \bigr) C_5
\\
&\quad+ \biggl( \lambda+\bar\lambda+dz_7 +\frac{\bar\lambda}{\bar\mu
z_7 -1}
\biggr) e^{z_7 b} C_7
\\
&\quad+ \biggl( \lambda+\bar\lambda+dz_8 +\frac{\bar\lambda}{\bar\mu
z_8 -1}
\biggr) e^{z_8 b} C_8 =\lambda e^{-b/\mu}. \end{split}
\end{equation}
\end{thm}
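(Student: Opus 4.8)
The plan is to lean on Lemma~\ref{lem:2}, which has already reduced the problem to the two constant-coefficient linear ODEs \eqref{eq:70} and \eqref{eq:71}, and then to pin down the integration constants from the behaviour at infinity together with a few point evaluations of the original equations. First I would solve the characteristic equations. For \eqref{eq:70} the characteristic polynomial $\mu\bar\mu(\lambda+\bar\lambda)z^2+(\bar\lambda\bar\mu-\lambda\mu)z$ factors as $z\bigl(\mu\bar\mu(\lambda+\bar\lambda)z+(\bar\lambda\bar\mu-\lambda\mu)\bigr)$, with roots $0$ and $z_5=(\lambda\mu-\bar\lambda\bar\mu)/(\mu\bar\mu(\lambda+\bar\lambda))$, so the general solution is $\psi_1(x)=C_4+C_5e^{z_5x}$, which is \eqref{eq:78}. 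For \eqref{eq:71} the cubic has the obvious root $z=0$; dividing it out leaves the quadratic $d\mu\bar\mu z^2+Az+B=0$ with $A=d\bar\mu-d\mu+\mu\bar\mu(\lambda+\bar\lambda)$ and $B=\bar\lambda\bar\mu-\lambda\mu-d$, whose roots I call $z_7,z_8$.

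The crux of the root analysis is that $z_7,z_8$ are real, distinct and negative. For reality I would show $\mathrm{D}_2>0$; the efficient route is to view $\mathrm{D}_2=(\mu+\bar\mu)^2d^2+2\mu\bar\mu(\lambda-\bar\lambda)(\mu+\bar\mu)d+\mu^2\bar\mu^2(\lambda+\bar\lambda)^2$ as a quadratic in the dividend rate $d$ and compute \emph{its} discriminant, which equals $-16\lambda\bar\lambda\mu^2\bar\mu^2(\mu+\bar\mu)^2<0$; since the leading coefficient is positive, $\mathrm{D}_2>0$ for every admissible $d$, so $z_7\ne z_8$ are real. The signs follow from Vieta applied to the quadratic together with the net profit condition $\bar\lambda\bar\mu>\lambda\mu+d$: the product $z_7z_8=B/(d\mu\bar\mu)$ is positive since $B>0$, and the sum $z_7+z_8=-A/(d\mu\bar\mu)$ is negative since $A>0$ (indeed $\bar\lambda\bar\mu>d$ gives $\mu(\bar\lambda\bar\mu-d)>0$, whence $A=d\bar\mu+\mu\bar\mu\lambda+\mu(\bar\lambda\bar\mu-d)>0$). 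Hence $z_7,z_8<0$.

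Next I would fix the constants. The general solution of \eqref{eq:71} is $C_6+C_7e^{z_7x}+C_8e^{z_8x}$; since $z_7,z_8<0$ both exponentials vanish as $x\to\infty$, so $\lim_{x\to\infty}\psi_2(x)=C_6$, and the standard limit $\lim_{x\to\infty}\psi_2(x)=0$ under the net profit condition (cf.\ Remark~\ref{rem:1}) forces $C_6=0$, giving \eqref{eq:79}. The four remaining unknowns $C_4,C_5,C_7,C_8$ are then determined by four scalar conditions, each obtained by substituting \eqref{eq:78}--\eqref{eq:79} into an equation already at hand: continuity $\psi_1(b)=\psi_2(b)$ gives \eqref{eq:82}; evaluating the intermediate equation \eqref{eq:73} at $x=0$, where its integral term drops out, gives \eqref{eq:81}; evaluating the integral equation \eqref{eq:68} at $x=0$ and multiplying through by $e^{b/\bar\mu}$ gives \eqref{eq:80}; and evaluating \eqref{eq:69} at $x=b$, where the $\int_b^x$ term vanishes and $\psi_2'(b)$ enters through the $d\psi_2'$ term, gives \eqref{eq:83}. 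Computing the elementary integrals $\int_0^b e^{(z_5+1/\mu)u}\,\mathrm{d}u$ and $\int_b^\infty\psi_2(u)e^{-u/\bar\mu}\,\mathrm{d}u$ in closed form and simplifying with the explicit value of $z_5$ produces exactly the stated coefficients.

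The main obstacle is not the algebra but the uniqueness bookkeeping, exactly as in the proof of Theorem~\ref{thm:3}. Passing to \eqref{eq:70}--\eqref{eq:71} by differentiation could in principle enlarge the solution set, so I must argue that a solution of the ODEs meeting the four point conditions genuinely satisfies \eqref{eq:68}--\eqref{eq:69} for all $x$, not merely at the chosen points. The clean way is to substitute \eqref{eq:78} into \eqref{eq:68} and note that the right-hand side is a combination of $1$, $e^{z_5x}$, $e^{-x/\mu}$ and $e^{x/\bar\mu}$ whose $1$- and $e^{z_5x}$-coefficients match automatically, so \eqref{eq:68} holds identically once the $e^{-x/\mu}$- and $e^{x/\bar\mu}$-coefficients are annihilated; these two requirements amount to \eqref{eq:81} together with \eqref{eq:80}, so nothing beyond the four displayed equations is lost. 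Since \eqref{eq:70}--\eqref{eq:71} were derived from \eqref{eq:68}--\eqref{eq:69} without extra hypotheses, the unique solution of the linear system \eqref{eq:80}--\eqref{eq:83} reconstructs the unique solution of the integral equations, which by its probabilistic meaning is the ruin probability. I would close by checking that the system is nonsingular under the net profit condition, so that $C_4,C_5,C_7,C_8$ are well defined.
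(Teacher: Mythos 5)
Your plan follows the paper's proof almost step for step: Lemma~\ref{lem:2}, the factorization of the two characteristic equations, positivity of $\mathrm{D}_2$, negativity of $z_7,z_8$ via Vieta and the net profit condition $\bar\lambda\bar\mu>\lambda\mu+d$, elimination of $C_6$ through $\lim_{x\to\infty}\psi_2(x)=0$, and the same four conditions (continuity at $b$, equations \eqref{eq:68} and \eqref{eq:73} at $x=0$, equation \eqref{eq:69} at $x=b$) yielding the system \eqref{eq:80}--\eqref{eq:83}. Your positivity argument for $\mathrm{D}_2$ — viewing it as a quadratic in $d$ with negative discriminant $-16\lambda\bar\lambda\mu^2\bar\mu^2(\mu+\bar\mu)^2$ — is a correct and equivalent variant of the paper's completion of the square, $\mathrm{D}_2=\bigl(d(\mu+\bar\mu)+\mu\bar\mu(\lambda-\bar\lambda)\bigr)^2+4\lambda\bar\lambda\mu^2\bar\mu^2$. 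Your coefficient-matching argument on $[0,b]$ (that the $1$- and $e^{z_5x}$-coefficients in \eqref{eq:68} match automatically, so the two point conditions \eqref{eq:80}--\eqref{eq:81} force \eqref{eq:68} to hold identically) is in fact more explicit than the paper's corresponding remark.

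The one genuine gap is the nonsingularity of the system \eqref{eq:80}--\eqref{eq:83}, which you defer with ``I would close by checking.'' Unlike Theorem~\ref{thm:3} and Theorem~\ref{thm:5}, whose statements carry the proviso that the relevant determinant is nonzero, Theorem~\ref{thm:4} asserts unconditionally that the constants are determined by the system; hence proving $\Delta_2\neq0$ is part of the proof burden, not an optional check. The paper does this by computing the determinant explicitly,
\begin{equation*}
\Delta_2=d(z_7-z_8)e^{(z_7+z_8)b}\biggl(\lambda e^{z_5 b}\biggl(\frac{\bar\lambda\bar\mu}{\mu}-\lambda\biggr)
+\frac{\bar\mu^2 z_7 z_8}{\bar\mu^2 z_7 z_8-\bar\mu(z_7+z_8)+1}\biggl((\lambda+\bar\lambda)\frac{\bar\lambda\bar\mu}{\mu}-\lambda\bar\lambda e^{z_5 b}\biggl(1+\frac{\bar\mu}{\mu}\biggr)\biggr)\biggr),
\end{equation*}
and showing each factor is positive using $z_7-z_8>0$, $z_7,z_8<0$, $z_5<0$ and $\bar\lambda\bar\mu/\mu>\lambda$; this is a nontrivial computation, not a routine observation, and without it (or an equivalent argument) the claim that $C_4,C_5,C_7,C_8$ are ``determined'' by the system is not established. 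A secondary, smaller looseness: your closing sufficiency argument covers \eqref{eq:68} carefully but handles \eqref{eq:69} only by the general remark that the ODEs were derived without extra hypotheses; to be complete you should also note (as the paper does for \eqref{eq:76}) that the remaining intermediate identity holds automatically for the constants obtained from the system.
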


\begin{proof}
By Lemma~\ref{lem:2}, $\psi_1(x)$ and $\psi_2(x)$ are solutions
to~\eqref{eq:70} and~\eqref{eq:71}.
We now find the general solutions to these equations.

It is easily seen that the characteristic equation corresponding
to~\eqref{eq:70} has two roots: $z_4=0$ and $z_5$
given before the assertion of the theorem. Hence, \eqref{eq:78} is true
with some constants $C_4$ and $C_5$.

The characteristic equation corresponding to~\eqref{eq:71} has the form
\begin{equation}
\label{eq:84} %
\begin{split} &d\mu\bar\mu z^3 + \bigl( d
\bar\mu-d\mu+\mu\bar\mu(\lambda+\bar \lambda) \bigr) z^2 +(\bar\lambda
\bar\mu-\lambda\mu-d) z =0. \end{split} %
\end{equation}

It is obvious that $z_6=0$ is a solution to~\eqref{eq:84}.
We now show that the equation
\begin{equation}
\label{eq:85} %
\begin{split} &d\mu\bar\mu z^2 + \bigl( d
\bar\mu-d\mu+\mu\bar\mu(\lambda+\bar \lambda) \bigr) z +(\bar\lambda\bar\mu-
\lambda\mu-d)=0. \end{split} %
\end{equation}
has two negative roots. We first notice that its discriminant $\mathrm
{D}_2$ defined above is positive.
Indeed, we have
\begin{equation*}
\begin{split} \mathrm{D}_2&=d^2 (\bar\mu-
\mu)^2 \,{+}\,\mu^2 \bar\mu^2 (\lambda\,{+}\,\bar\lambda
)^2 \,{+}\,2d\mu\bar\mu(\lambda\,{+}\,\bar\lambda) (\bar\mu-\mu) \,{+}\,4d\mu\bar\mu(d
\,{+}\,\lambda\mu-\bar\lambda\bar\mu)
\\
&=d^2 (\mu+\bar\mu)^2 +\mu^2 \bar
\mu^2 (\lambda+\bar\lambda)^2 +2d\mu\bar\mu(\lambda-\bar
\lambda) (\mu+\bar\mu)
\\
&= \bigl( d(\mu+\bar\mu) +\mu\bar\mu(\lambda-\bar\lambda)
\bigr)^2 +4\lambda\bar\lambda\mu^2 \bar
\mu^2>0. \end{split} %
\end{equation*}

Therefore, \eqref{eq:85} has two real roots. Next, by the conditions of
the theorem, we have
\[
\bar\lambda\bar\mu-\lambda\mu-d >0
\]
and
\[
d\bar\mu-d\mu+\mu\bar\mu(\lambda+\bar\lambda) =\mu(\bar\lambda\bar \mu-\lambda
\mu-d) +\lambda\mu^2 +\lambda\mu\bar\mu+d\bar\mu>0,
\]
which shows that both roots are negative. Consequently, we get
\[
\psi_2(x)=C_6 +C_7 e^{z_7 x}
+C_8 e^{z_8 x}, \quad x\in[b,\infty),
\]
with some constants $C_6$, $C_7$ and $C_8$. Moreover, since $\bar\lambda
\bar\mu>\lambda\mu+d$, using standard considerations
(see, e.g., \cite{MiRa2016,MiRaSt2015,RoScScTe1999}) we can easily
show that $\lim_{x\to\infty} \psi(x) =0$, which yields $C_6=0$.
Thus, we obtain~\eqref{eq:79}.

The constants $C_4$, $C_5$, $C_7$ and $C_8$ are determined by letting
$x=0$ in~\eqref{eq:68} and~\eqref{eq:73},
taking into account that $\psi_1(b)=\psi_2(b)$ and letting $x=b$
in~\eqref{eq:69}.

Substituting~\eqref{eq:78} and~\eqref{eq:79} into~\eqref{eq:68}
and~\eqref{eq:73} as $x=0$ and into~\eqref{eq:69} as $x=b$
and doing some simplifications yield equations~\eqref{eq:80}, \eqref
{eq:81} and~\eqref{eq:83}, respectively.
Substituting~\eqref{eq:78} and~\eqref{eq:79} into the equality $\psi
_1(b)=\psi_2(b)$ gives~\eqref{eq:82}.

We denote the determinant of the system of equations~\eqref
{eq:80}--\eqref{eq:83} by $\Delta_2$.
A standard computation shows that
\begin{equation*}
\begin{split} \Delta_2&=d(z_7-z_8)
e^{(z_7+z_8)b} \biggl( \lambda e^{z_5 b} \biggl( \frac{\bar\lambda\bar\mu}{\mu} -
\lambda \biggr)
\\
&\quad+\frac{\bar\mu^2 z_7 z_8}{\bar\mu^2 z_7 z_8 -\bar\mu(z_7+z_8)+1} \biggl( (\lambda+\bar\lambda) \frac{\bar\lambda\bar\mu}{\mu} -
\lambda \bar\lambda e^{z_5 b} \biggl(1+\frac{\bar\mu}{\mu} \biggr) \biggr)
\biggr), \end{split} %
\end{equation*}
which is positive. Indeed, $z_7-z_8>0$ by definition, $\bar\lambda\bar
\mu/\mu-\lambda>0$ by the conditions of the theorem and
$\bar\mu^2 z_7 z_8 -\bar\mu(z_7+z_8)+1>0$ since $z_7<0$ and $z_8<0$.
Moreover, since $z_5<0$, we have
\[
(\lambda+\bar\lambda) \frac{\bar\lambda\bar\mu}{\mu} -\lambda\bar \lambda e^{z_5 b}
\biggl(1+\frac{\bar\mu}{\mu} \biggr) >(\lambda+\bar\lambda) \frac{\bar\lambda\bar\mu}{\mu} -
\lambda\bar \lambda \biggl(1+\frac{\bar\mu}{\mu} \biggr) =\bar\lambda \biggl(
\frac{\bar\lambda\bar\mu}{\mu} -\lambda \biggr)>0.
\]

Thus, since $\Delta_2 \neq0$, the system of equations~\eqref
{eq:80}--\eqref{eq:83} has a unique solution.
Furthermore, note that letting $x=b$ in~\eqref{eq:76} gives no
additional information about unknown constants, but the equality
in~\eqref{eq:76} holds for the values of the constants found from the
system of equations~\eqref{eq:80}--\eqref{eq:83}.
Therefore, each of differential equations~\eqref{eq:70} and~\eqref
{eq:71} has the unique solution given by~\eqref{eq:78} or~\eqref{eq:79},
respectively.
Since we have derived these equations from~\eqref{eq:68} and~\eqref
{eq:69} without any additional assumptions, we conclude that
the functions $\psi_1(x)$ and $\psi_2(x)$ given by~\eqref{eq:78}
and~\eqref{eq:79} are unique solutions to~\eqref{eq:68} and~\eqref{eq:69}
satisfying the certain conditions.
This guaranties that the functions $\psi_1(x)$ and $\psi_2(x)$ we have
found coincide with the ruin probability
on the intervals $[0,b]$ and $[b,\infty)$, respectively, which
completes the proof.
\end{proof}

\subsection{The expected discounted dividend payments until ruin in the
model without dependence}
\label{sec:5.3}

We now also assume that $\theta=0$ and $\bar\theta=0$.
Then equations~\eqref{eq:29} and~\eqref{eq:42} for the expected
discounted dividend payments $v(x)$
in the case of exponentially distributed claim and premium sizes take
the form
\begin{equation}
\label{eq:86} %
\begin{split} &(\lambda+\bar\lambda+
\delta)v_1(x) \\
&\quad=\frac{\lambda}{\mu}\, e^{-x/\mu} \int
_0^x v_1(u) e^{u/\mu}\,
\mathrm{d}u
+\frac{\bar\lambda}{\bar\mu}\, e^{x/\bar\mu} \int_x^b
v_1(u) e^{-u/\bar\mu}\, \mathrm{d}u \\
&\qquad+\frac{\bar\lambda}{\bar\mu}\,
e^{x/\bar\mu} \int_b^{\infty} v_2(u)
e^{-u/\bar\mu}\, \mathrm{d}u, \quad x\in[0,b], \end{split} %
\end{equation}
and
\begin{equation}
\label{eq:87} %
\begin{split} &d v'_2(x) +(
\lambda+\bar\lambda+\delta) v_2(x) \\
&\quad=\frac{\lambda}{\mu
}\,
e^{-x/\mu} \int_0^b v_1(u)
e^{u/\mu}\, \mathrm{d}u
+\frac{\lambda}{\mu}\, e^{-x/\mu} \int_b^x
v_2(u) e^{u/\mu}\, \mathrm{d}u\\
&\qquad +\frac{\bar\lambda}{\bar\mu}\,
e^{x/\bar\mu} \int_x^{\infty} v_2(u)
e^{-u/\bar\mu}\, \mathrm{d}u +d, \;\; x\in[b,\infty), \end{split} %
\end{equation}
respectively.

Lemma~\ref{lem:3} below shows that integro-differential equations~\eqref
{eq:86} and~\eqref{eq:87} can be reduced to
linear differential equations with constant coefficients.

\begin{lemma}
\label{lem:3}
Let the surplus process $ (X_t^b(x) )_{t\ge0}$ follow~\eqref
{eq:6} under the above assumptions with $\theta=0$ and $\bar\theta=0$,
and let claim and premium sizes be exponentially distributed with means
$\mu$ and $\bar\mu$, respectively.
Then $v_1(x)$ and $v_2(x)$ are solutions to the differential equations
\begin{equation}
\label{eq:88} \mu\bar\mu(\lambda+\bar\lambda+\delta) v''_1(x)
+ \bigl( \bar\mu(\bar \lambda+\delta) -\mu(\lambda+\delta) \bigr)
v'_1(x) -\delta v_1(x)=0, \quad x
\in[0,b],
\end{equation}
and
\begin{equation}
\label{eq:89} %
\begin{split} &d\mu\bar\mu v'''_2(x)
+ \bigl( d(\bar\mu-\mu) +\mu\bar\mu(\lambda +\bar\lambda+\delta) \bigr)
v''_2(x)
\\
&\quad+ \bigl( \bar\mu(\bar\lambda+\delta) -\mu(\lambda+\delta) -d \bigr)
v'_2(x) -\delta v_2(x)=-d, \quad x\in[b,
\infty). \end{split} %
\end{equation}
\end{lemma}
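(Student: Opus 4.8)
The plan is to follow the same differentiate-and-eliminate scheme used in the proofs of Lemma~\ref{lem:1} and of Theorems~\ref{thm:1}--\ref{thm:2}. Every integral on the right-hand side of \eqref{eq:86} and \eqref{eq:87} is of one of two types: a ``$\mu$-type'' term $e^{-x/\mu}\int_0^{(\cdot)}(\cdots)e^{u/\mu}\,\mathrm{d}u$ or a ``$\bar\mu$-type'' term $e^{x/\bar\mu}\int_{(\cdot)}^{\infty}(\cdots)e^{-u/\bar\mu}\,\mathrm{d}u$, and under differentiation each such term $K$ obeys a first-order relation, namely $K'=-\tfrac1\mu K+(\text{const})\,v$ in the $\mu$-case and $K'=\tfrac1{\bar\mu}K-(\text{const})\,v$ in the $\bar\mu$-case. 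Hence a suitable constant-coefficient combination of $K$ and $K'$ annihilates $K$ and leaves only a multiple of $v$. As before, the differentiability of $v_1$ and $v_2$ needed at each step is secured by a bootstrap: the right-hand sides of \eqref{eq:86} and \eqref{eq:87} are continuous, so $v_1,v_2$ are continuous, whence the right-hand sides are differentiable, and so on, each differentiation being legitimate thanks to the exponential densities \eqref{eq:45}--\eqref{eq:46}.

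For \eqref{eq:86} I would first write it as $(\lambda+\bar\lambda+\delta)v_1=J_1+J_2$, with $J_1$ the $\mu$-type integral and $J_2$ the combined $\bar\mu$-type integral over $[x,b]\cup[b,\infty)$, so that $J_1'=-\tfrac1\mu J_1+\tfrac\lambda\mu v_1$ and $J_2'=\tfrac1{\bar\mu}J_2-\tfrac{\bar\lambda}{\bar\mu}v_1$. Differentiating \eqref{eq:86} and forming $(-\bar\mu)\times(\text{derivative})+\eqref{eq:86}$ annihilates $J_2$ and gives a first-order equation in $v_1$ containing only $J_1$. Differentiating that equation once more and forming $\mu\times(\text{derivative})+(\text{equation})$ annihilates $J_1$; collecting the coefficients of $v_1''$, $v_1'$ and $v_1$ and multiplying by $-\mu$ yields exactly \eqref{eq:88}. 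Here the discount rate $\delta$ is precisely what makes the zeroth-order coefficient $-\delta$ nonzero, in contrast with \eqref{eq:70}, where no discounting is present.

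For \eqref{eq:87} the same idea applies after one observation: the factor $\int_0^b v_1(u)e^{u/\mu}\,\mathrm{d}u$ is a constant, so the first two right-hand terms merge into a single $\mu$-type term $\tilde K$ with $\tilde K'=-\tfrac1\mu\tilde K+\tfrac\lambda\mu v_2$, the third is a $\bar\mu$-type term $K_3$ with $K_3'=\tfrac1{\bar\mu}K_3-\tfrac{\bar\lambda}{\bar\mu}v_2$, and there is the extra constant $+d$. Since \eqref{eq:87} already contains $v_2'$, I would differentiate it once (which removes the constant $d$) and form $\mu\times(\text{derivative})+\eqref{eq:87}$ to annihilate $\tilde K$; this produces a second-order equation in $v_2$ carrying only $K_3$ together with a surviving constant inherited from the $+d$ in \eqref{eq:87}. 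Differentiating once more and forming $(-\bar\mu)\times(\text{derivative})+(\text{equation})$ annihilates $K_3$; after moving the resulting multiple of $v_2$ to the left and multiplying by $-1$, the constant becomes $-d$ and one arrives at \eqref{eq:89}.

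I expect the only real effort to be bookkeeping: checking that each combination cancels the intended integral and that the accumulated coefficients of $v_2'''$, $v_2''$, $v_2'$, $v_2$ collapse to those displayed in \eqref{eq:89}. The mildly delicate points, absent from Lemma~\ref{lem:2}, are tracking the inhomogeneous constant $d$ through the two differentiations (so that it reappears as $-d$ on the right of \eqref{eq:89}) and the discount $\delta$ (which yields the $-\delta v_2$ term); both are handled automatically by the elimination scheme once the coefficients are assembled with care.
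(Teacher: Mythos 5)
Your proposal is correct and follows exactly the route the paper intends: the paper's proof of Lemma~\ref{lem:3} is simply the remark that it is "similar to the proof of Lemma~\ref{lem:2}," and your differentiate-and-eliminate scheme (kill the $\bar\mu$-type integral first for \eqref{eq:86}, the $\mu$-type first for \eqref{eq:87}, tracking the extra $\delta$ and the inhomogeneous constant $d$) is precisely that adaptation, and the coefficient bookkeeping does collapse to \eqref{eq:88} and \eqref{eq:89}. The only blemish is a trivial normalization slip: after forming $\mu\times(\text{derivative})+(\text{equation})$ for \eqref{eq:88}, the final rescaling needed is by $-1$, not $-\mu$ (the same harmless looseness the paper itself commits in deriving \eqref{eq:70}).
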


The proof of the lemma is similar to the proof of Lemma~\ref{lem:2}.

To formulate the next theorem, we define the following constants:
\begin{align*}
\mathrm{D}_3&= \bigl( \bar\mu(\bar\lambda+\delta) -\mu(\lambda+
\delta) \bigr)^2 +4\delta\mu\bar\mu(\lambda+\bar\lambda+\delta),\\
%
\mathrm{D}_4&=-18\delta d\mu\bar\mu
\bigl( d(\bar\mu-\mu) +\mu\bar\mu (\lambda+\bar\lambda+\delta) \bigr) \bigl( \bar
\mu(\bar\lambda+\delta) -\mu(\lambda+\delta) -d \bigr)
\\
&\quad+4\delta \bigl( d(\bar\mu-\mu) +\mu\bar\mu(\lambda+\bar \lambda+\delta)
\bigr)^3
\\
&\quad+ \bigl( d(\bar\mu-\mu) +\mu\bar\mu(\lambda+\bar\lambda+\delta )
\bigr)^2 \bigl( \bar\mu(\bar\lambda+\delta) -\mu(\lambda+\delta) -d
\bigr)^2
\\
&\quad-4d\mu\bar\mu \bigl( \bar\mu(\bar\lambda+\delta) -\mu(\lambda +\delta) -d
\bigr)^3 -27 (\delta d\mu\bar\mu)^2,\\ 
z_9&=\frac{- ( \bar\mu(\bar\lambda+\delta) -\mu(\lambda+\delta)
 ) +\sqrt{\mathrm{D}_3}}{2\mu\bar\mu(\lambda+\bar\lambda+\delta)}
\end{align*}
and
\[
z_{10}=\frac{- ( \bar\mu(\bar\lambda+\delta) -\mu(\lambda+\delta)
 ) -\sqrt{\mathrm{D}_3}}{2\mu\bar\mu(\lambda+\bar\lambda+\delta)}.
\]

\begin{thm}
\label{thm:5}
Let the surplus process $ (X_t^b(x) )_{t\ge0}$ follow~\eqref
{eq:6} under the above assumptions with $\theta=0$ and $\bar\theta=0$,
and let claim and premium sizes be exponentially distributed with means
$\mu$ and $\bar\mu$, respectively, and let
$\bar\lambda\bar\mu>\lambda\mu+d$ and $\mathrm{D}_4>0$.
Then we have
\begin{equation}
\label{eq:90} v_1(x)=C_9 e^{z_9 x}
+C_{10} e^{z_{10} x}, \quad x\in[0,b],
\end{equation}
and
\begin{equation}
\label{eq:91} v_2(x)=C_{11} e^{z_{11} x}
+C_{12} e^{z_{12} x} +d/\delta, \quad x\in [b,\infty),
\end{equation}
where $z_{11}$ and $z_{12}$ are negative roots of the cubic equation
\begin{equation}
\label{eq:92} d\mu\bar\mu z^3 + \bigl( d(\bar\mu-\mu) +\mu\bar\mu(
\lambda+\bar \lambda+\delta) \bigr) z^2 + \bigl( \bar\mu(\bar\lambda+
\delta) -\mu(\lambda+\delta) -d \bigr) z -\delta=0
\end{equation}
and the constants $C_9$, $C_{10}$, $C_{11}$ and $C_{12}$ are determined
from the system of linear equations~\eqref{eq:93}--\eqref{eq:96}:
\begin{equation}
\label{eq:93} %
\begin{split} & \biggl( (\lambda+\bar\lambda+\delta)
e^{b/\bar\mu} -\frac{\bar\lambda
}{\bar\mu z_9 -1} \bigl(e^{z_9 b} -e^{b/\bar\mu}
\bigr) \biggr) C_9
\\
&\quad+ \biggl( (\lambda+\bar\lambda+\delta) e^{b/\bar\mu} -
\frac{\bar
\lambda}{\bar\mu z_{10} -1} \bigl(e^{z_{10} b} -e^{b/\bar\mu}\bigr) \biggr)
C_{10}
\\
&\quad+\frac{\bar\lambda e^{z_{11} b}}{\bar\mu z_{11} -1}\, C_{11} +\frac{\bar\lambda e^{z_{12} b}}{\bar\mu z_{12} -1}\,
C_{12} =\frac{d\bar\lambda}{\delta}, \end{split} %
\end{equation}
\begin{equation}
\label{eq:94} %
\begin{split} & \biggl( \lambda+\delta+
\frac{\lambda\bar\mu}{\mu} -\bar\mu z_9 (\lambda+\bar\lambda+\delta)
\biggr) C_9
\\
&\quad+ \biggl( \lambda+\delta+\frac{\lambda\bar\mu}{\mu} -\bar\mu z_{10} (
\lambda+\bar\lambda+\delta) \biggr) C_{10}=0, \end{split} %
\end{equation}
\begin{equation}
\label{eq:95} e^{z_9 b} C_9 +e^{z_{10} b}
C_{10} -e^{z_{11} b} C_{11} -e^{z_{12} b}
C_{12} =d/\delta
\end{equation}
and
\begin{equation}
\label{eq:96} %
\begin{split} &\frac{\lambda}{\mu z_9 +1}
\bigl(e^{-b/\mu}-e^{z_9 b} \bigr) C_9 +
\frac
{\lambda}{\mu z_{10} +1} \bigl( e^{-b/\mu} -e^{z_{10} b} \bigr)
C_{10}
\\
&\quad+ \biggl( \lambda+\bar\lambda+\delta+dz_{11} +
\frac{\bar\lambda
}{\bar\mu z_{11} -1} \biggr) e^{z_{11} b} C_{11}
\\
&\quad+ \biggl( \lambda+\bar\lambda+\delta+dz_{12} +
\frac{\bar\lambda
}{\bar\mu z_{12} -1} \biggr) e^{z_{12} b} C_{12} =-d \biggl(1+
\frac{\lambda}{\delta} \biggr) \end{split} %
\end{equation}
provided that its determinant is not equal to 0.
\end{thm}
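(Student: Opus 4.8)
The plan is to follow the template of the proof of Theorem~\ref{thm:4}, now using the linear ordinary differential equations \eqref{eq:88} and \eqref{eq:89} provided by Lemma~\ref{lem:3}, together with a constant particular solution and a root-sign analysis of the associated cubic. First I would treat $v_1(x)$: the homogeneous second-order equation \eqref{eq:88} has characteristic equation $\mu\bar\mu(\lambda+\bar\lambda+\delta)z^2+\bigl(\bar\mu(\bar\lambda+\delta)-\mu(\lambda+\delta)\bigr)z-\delta=0$, whose discriminant is precisely $\mathrm{D}_3=\bigl(\bar\mu(\bar\lambda+\delta)-\mu(\lambda+\delta)\bigr)^2+4\delta\mu\bar\mu(\lambda+\bar\lambda+\delta)$. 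Since $\delta>0$ and $\mu,\bar\mu>0$, we have $\mathrm{D}_3>0$, so there are two distinct real roots $z_9,z_{10}$. As $[0,b]$ is bounded, both exponentials are admissible, giving the general solution \eqref{eq:90}.

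Next I would handle $v_2(x)$ via the non-homogeneous third-order equation \eqref{eq:89}. A constant particular solution is $v_2\equiv d/\delta$, since the left-hand side then reduces to $-\delta\cdot(d/\delta)=-d$, matching the constant right-hand side; this explains the additive term $d/\delta$ in \eqref{eq:91}. The homogeneous part has the cubic characteristic equation \eqref{eq:92}, and because $\mathrm{D}_4$ is exactly its discriminant, the hypothesis $\mathrm{D}_4>0$ yields three distinct real roots. The main obstacle is to show that exactly two of them are negative. Writing $P(z)=d\mu\bar\mu z^3+bz^2+cz-\delta$ with $b=d(\bar\mu-\mu)+\mu\bar\mu(\lambda+\bar\lambda+\delta)$ and $c=\bar\mu(\bar\lambda+\delta)-\mu(\lambda+\delta)-d$, I note that $P(0)=-\delta<0$ while $P(z)\to+\infty$, so at least one root is positive; moreover the product of the three roots equals $\delta/(d\mu\bar\mu)>0$, so the number of negative roots is even, hence $0$ or $2$. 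To exclude three positive roots I would use Vieta's formulas: three positive roots would force $b<0$ and $c>0$ simultaneously. Combining $b<0$ (which forces $\mu>\bar\mu$ and $d(\mu-\bar\mu)>\mu\bar\mu(\lambda+\bar\lambda+\delta)$) with $c>0$ and the net profit condition $\bar\lambda\bar\mu>\lambda\mu+d$, and eliminating $d$, leads to the manifestly positive quantity $\mu^2\lambda+\bar\mu^2\bar\lambda+\mu\bar\mu\delta+\delta(\mu-\bar\mu)^2$ being forced to be negative, a contradiction. Hence there are exactly two negative roots $z_{11},z_{12}$ and one positive root $z_{13}$.

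Since $v(x)=\mathbb E\bigl[\int_0^{\tau_b}e^{-\delta t}\,\mathrm{d}D_t\bigr]\le\int_0^\infty de^{-\delta t}\,\mathrm{d}t=d/\delta$, the function $v_2$ is bounded, so the coefficient of the positive-root exponential $e^{z_{13}x}$ must vanish; adding the particular solution then gives \eqref{eq:91} (and automatically $\lim_{x\to\infty}v_2(x)=d/\delta$, in agreement with Remark~\ref{rem:4}). To determine $C_9,C_{10},C_{11},C_{12}$, I would impose four boundary relations exactly as in Theorem~\ref{thm:4}: substitute \eqref{eq:90} and \eqref{eq:91} into \eqref{eq:86} at $x=0$, into the first intermediate first-order equation arising in the proof of Lemma~\ref{lem:3} (the analogue of \eqref{eq:73}) at $x=0$, into the continuity condition $v_1(b)=v_2(b)$, and into \eqref{eq:87} at $x=b$. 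After simplification these become \eqref{eq:93}, \eqref{eq:94}, \eqref{eq:95} and \eqref{eq:96}, respectively, and the constants are uniquely determined provided the system's determinant is nonzero.

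Finally, as in Theorem~\ref{thm:4}, I would observe that evaluating the lower-order intermediate equation (the analogue of \eqref{eq:76}) at $x=b$ produces no new information but must hold for the computed constants; this confirms that the constructed pair $v_1,v_2$ is the unique solution of the integro-differential equations \eqref{eq:86}--\eqref{eq:87} with the prescribed boundedness and matching conditions. Since \eqref{eq:88}--\eqref{eq:89} were derived from \eqref{eq:86}--\eqref{eq:87} without extra assumptions, the functions \eqref{eq:90} and \eqref{eq:91} coincide with the expected discounted dividend payments on $[0,b]$ and $[b,\infty)$, completing the proof. The only genuinely substantive step is the root-sign analysis of the cubic; the remaining work is routine ODE solving and linear algebra.
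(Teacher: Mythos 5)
Your proposal is correct and follows the same overall architecture as the paper's proof (invoke Lemma~\ref{lem:3}, solve the two characteristic equations, use $\mathrm{D}_3>0$ and $\mathrm{D}_4>0$ for distinct real roots, suppress the positive-root exponential, then impose the four boundary relations exactly as in Theorem~\ref{thm:4} and argue uniqueness). The one place where you genuinely diverge is the sign analysis of the roots of~\eqref{eq:92}. The paper argues via Vieta that the number of negative roots is even, and then excludes the ``zero negative roots'' case probabilistically: since $\lim_{x\to\infty}v_2(x)=d/\delta$ (which needs the net profit condition and the Schmidli-type argument of Remark~\ref{rem:4}), three positive roots would force $v_2$ to be constant, ``which is impossible'' --- a step the paper does not actually justify. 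You instead exclude three positive roots purely algebraically: they would force the coefficient of $z^2$ to be negative and the coefficient of $z$ to be positive simultaneously, and eliminating $d$ between these two inequalities yields $\mu^2\lambda+\bar\mu^2\bar\lambda+\mu\bar\mu\delta+\delta(\mu-\bar\mu)^2<0$, a contradiction (your algebra here checks out, and in fact the net profit condition is not even needed for this step). You also kill the positive-root term using the elementary bound $v_2(x)\le d/\delta$ rather than the limit statement. Both of these substitutions make your argument more self-contained and patch the paper's slightly hand-waved ``impossibility'' claim, at the cost of a somewhat longer computation; the paper's route is shorter but leans on the probabilistic limit and an unproved assertion.
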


\begin{proof}
The proof is similar to the proof of Theorem~\ref{thm:4}, so we omit
detailed considerations.
By Lemma~\ref{lem:3}, $v_1(x)$ and $v_2(x)$ are solutions to~\eqref
{eq:88} and~\eqref{eq:89}.

It is easily seen that $\mathrm{D}_3>0$. Hence the characteristic
equation corresponding to~\eqref{eq:88} has two real roots
$z_9$ and $z_{10}$ given before the assertion of the theorem. This
yields~\eqref{eq:90} with some constants $C_9$ and $C_{10}$.

The assumption $\mathrm{D}_4>0$ guarantees that cubic equation~\eqref
{eq:92} has three distinct real roots.
Consequently, the general solution to~\eqref{eq:89} is given by
\[
v_2(x)=C_{11} e^{z_{11} x} +C_{12}
e^{z_{12} x} +C_{13} e^{z_{13} x}, \quad x\in[b,\infty)
\]
with some constants $C_{11}$, $C_{12}$ and $C_{13}$.

By Vieta's theorem, we conclude that~\eqref{eq:92} has either two or no
negative roots.
Since $\bar\lambda\bar\mu>\lambda\mu+d$, applying arguments similar to
those in \cite[p.~70]{Sc2008} shows that
$\lim_{x\to\infty} v_2(x) =d/{\delta}$.
Therefore, if~\eqref{eq:92} had no negative roots, the function
$v_2(x)$ would be constant, which is impossible.
From this we deduce that~\eqref{eq:92} has two negative roots. We
denote them by $z_{11}$ and $z_{12}$.
Since $z_{13}>0$, we get $C_{13}=0$, which yields~\eqref{eq:91}.

To determine the constants $C_9$, $C_{10}$, $C_{11}$ and $C_{12}$, we
apply considerations similar to those in the proof
of Theorem~\ref{thm:4} and obtain the system of linear equations~\eqref
{eq:93}--\eqref{eq:96}, which has a unique solution
provided that its determinant is not equal to 0.
Finally, applying arguments similar to those in the proof of
Theorem~\ref{thm:4} guaranties that the functions $v_1(x)$ and $v_2(x)$
we have found coincide with the expected discounted dividend payments
on the intervals $[0,b]$ and $[b,\infty)$.
\end{proof}

\section{Numerical illustrations}
\label{sec:6}

We now present numerical examples for the results obtained in
Section~\ref{sec:5}.
The claim and premium sizes are also assumed to be exponentially distributed.
Set $\lambda=0.1$, $\bar\lambda=2.3$, $\mu=3$ and $\bar\mu=0.2$.

Let now the conditions of Theorem~\ref{thm:3} hold. Then applying this
theorem we can calculate the ruin probability for $x\in[0,\infty)$
in the model without dividend payments for different values of $\theta$:
\begin{itemize}
\item if $\theta=-0.9$, then $\psi(x) \approx0.929934 e^{-0.022277 x}
-0.006234 e^{-0.744001 x}$;
\item if $\theta=-0.5$, then $\psi(x) \approx0.817753 e^{-0.059151 x}
-0.009736 e^{-0.712238 x}$;
\item if $\theta=-0.1$, then $\psi(x) \approx0.698198 e^{-0.100061 x}
-0.003545 e^{-0.676439 x}$;
\item if $\theta=0.1$, then $\psi(x) \approx0.634275 e^{-0.122565 x}
+0.004545 e^{-0.656490 x}$;
\item if $\theta=0.5$, then $\psi(x) \approx0.492433 e^{-0.173655 x}
+0.036374 e^{-0.610511 x}$;
\item if $\theta=0.9$, then $\psi(x) \approx0.309485 e^{-0.239185 x}
+0.111461 e^{-0.550092 x}$.
\end{itemize}

Table~\ref{table:1} presents the results of calculations for some
values of $x$.

\begin{table}
\caption{The ruin probabilities in the model without dividend payments
for different values of~$\theta$}\label{table:1}
\begin{tabular*}{\textwidth}{@{\extracolsep{\fill}}c
D{.}{.}{1.6}D{.}{.}{1.6}D{.}{.}{1.6}D{.}{.}{1.6}D{.}{.}{1.6}D{.}{.}{1.6}@{}}
\hline
$x\; \backslash\;\theta$ & \multicolumn{1}{c}{-0.9} & \multicolumn
{1}{c}{-0.5} &
\multicolumn{1}{c}{-0.1} & \multicolumn{1}{c}{0.1} & \multicolumn
{1}{c}{0.5} & \multicolumn{1}{c}{0.9} \\
\hline
0 &0.923700 &0.808017 &0.694653 &0.638820 &0.528807 &0.420945\\
1 &0.906484 &0.766009 &0.629915 &0.563467 &0.433686 &0.307949\\
2 &0.888003 &0.724172 &0.570650 &0.497607 &0.358674 &0.228911\\
5 &0.831762 &0.608107 &0.423229 &0.343831 &0.208380 &0.100718\\
7 &0.795628 &0.540438 &0.346536 &0.268996 &0.146531 &0.060380\\
10 &0.744222 &0.452611 &0.256692 &0.186208 &0.086812 &0.028761\\
15 &0.665780 &0.336734 &0.155646 &0.100887 &0.036402 &0.008589\\
20 &0.595603 &0.250520 &0.094376 &0.054662 &0.015276 &0.002591\\
50 &0.305291 &0.042479 &0.004690 &0.001383 &0.000083 &0.000002\\
70 &0.195532 &0.013013 &0.000634 &0.000119 &0.000003 &0.000000\\
\hline
\end{tabular*}
\end{table}

Next, we denote by $\psi_0(x)$ the ruin probability in this model where
$\theta=0$. It is given by
\[
\psi_0(x)=\frac{\lambda(\mu+\bar\mu)}{\bar\mu(\lambda+\bar\lambda)}\, \exp \biggl( -\frac{(\bar\lambda\bar\mu-\lambda\mu)x}{\mu\bar\mu(\lambda
\bar\lambda)}
\biggr), \quad x\in[0,\infty)
\]
(see \cite{Boi2003,MiRa2016}). In our example, $\psi_0(x) \approx
0.666667 e^{-0.111111 x}$. The values of $\psi_0(x)$
for some $x$ are given in Table~\ref{table:2}.

Let now the conditions of Theorems~\ref{thm:4} and~\ref{thm:5} hold.
Set additionally $b=5$, $d=0.1$ and $\delta=0.01$.
Applying Theorems~\ref{thm:4} and~\ref{thm:5} we can calculate the ruin
probability $\psi(x)$ and
the expected discounted dividend payments until ruin $v(x)$:
\begin{align*}
\psi_1(x) &\approx0.389315 +0.407125 e^{-0.111111 x}, \quad x
\in[0,5],\\
\psi_2(x) &\approx0.809486 e^{-0.051863 x} -1.24332
\cdot10^{39} e^{-19.281470 x}, \quad x\in[5,\infty);\\
v_1(x) &\approx4.555889 e^{0.049220 x} -2.296416
e^{-0.140506 x}, \quad x\in[0,5],\\
v_2(x) &\approx10 -9.149114 e^{-0.107684 x} +4.07834
\cdot10^{40} e^{-19.405407 x}, \quad x\in[5,\infty).
\end{align*}

The results of calculations for some values of $x$ are given in
Table~\ref{table:2}.

\begin{table}
\caption{The ruin probabilities with and without dividend payments and
the expected discounted dividend payments
in the model without dependence}\label{table:2}
\begin{tabular*}{6cm}{@{\extracolsep{\fill}}cD{.}{.}{1.6}D{.}{.}{1.6}D{.}{.}{1.6}@{}}
\hline
$x$ & \multicolumn{1}{c}{$\psi_0(x)$} & \multicolumn{1}{c}{$\psi(x)$} &
\multicolumn{1}{c}{$v(x)$} \\
\hline
0 &0.666667 &0.796440 &2.259472\\
1 &0.596560 &0.753626 &2.790339\\
2 &0.533825 &0.715315 &3.293343\\
5 &0.382502 &0.622904 &4.689607\\
7 &0.306284 &0.563044 &5.694612\\
10 &0.219462 &0.481915 &6.883176\\
15 &0.125917 &0.371835 &8.180807\\
20 &0.072245 &0.286900 &8.938193\\
50 &0.002577 &0.060536 &9.958020\\
70 &0.000279 &0.021455 &9.995128\\
\hline
\end{tabular*}
\end{table}

The results presented in Tables~\ref{table:1} and~\ref{table:2} show
that the positive dependence between the claim sizes
and the inter-claim times decreases the ruin probability and the
negative dependence increases it.
This conclusion seems to be natural. Indeed, in the case of negative
dependence, the situation where large claims arrive
in short time intervals is more probable, which obviously leads to ruin
in the near future.
Moreover, it is easily seen from Table~\ref{table:2} that dividend
payments substantially increase the ruin probability,
which is also an expected conclusion.

\section*{Acknowledgments}
The author is deeply grateful to the anonymous referees for careful
reading and valuable comments and suggestions,
which helped to improve the earlier version of the paper.




\end{document}